\newcommand*\patchAmsMathEnvironmentForLineno[1]{%
  \expandafter\let\csname old#1\expandafter\endcsname\csname #1\endcsname
  \expandafter\let\csname oldend#1\expandafter\endcsname\csname end#1\endcsname
  \renewenvironment{#1}%
     {\linenomath\csname old#1\endcsname}%
     {\csname oldend#1\endcsname\endlinenomath}}%
\newcommand*\patchBothAmsMathEnvironmentsForLineno[1]{%
  \patchAmsMathEnvironmentForLineno{#1}%
  \patchAmsMathEnvironmentForLineno{#1*}}%
\renewcommand{\subsectionmark}[1]{}
\newenvironment{enumeratearabic*}{
\begin{enumerate*}[label=(\arabic*)] %
}{
\end{enumerate*}
}
\newenvironment{enumerateroman*}{
\begin{enumerate*}[label=(\roman*)] %
}{
\end{enumerate*}
}
\numberwithin{equation}{section}
\newtheorem{theoremcounter}{theoremcounter}[section]
\newtheorem{maintheoremcounter}{maintheoremcounter}
\theoremstyle{plain}
\newtheorem{corollary}[theoremcounter]{Corollary}
\newtheorem{lemma}[theoremcounter]{Lemma}
\newtheorem{proposition}[theoremcounter]{Proposition}
\newtheorem{theorem}[theoremcounter]{Theorem}
\theoremstyle{plain}
\newtheorem{maintheorem}[maintheoremcounter]{Theorem}
\theoremstyle{definition}
\newtheorem{definition}[theoremcounter]{Definition}
\theoremstyle{remark}
\newtheorem{remark}[theoremcounter]{Remark}
\theoremstyle{nonumberremark}
\lstdefinelanguage{julia}
{%
  keywordsprefix=\@,
  morekeywords={%
    error,throw,assert,
    begin,end,
    if,elseif,else,
    try,catch,
    while,for,do,break,continue,
    function,return,where,
    type,mutable,abstract,const,
    in,
    macro,
    quote,
    let,
    ccall,
    using,module,import,export,importall,
    local,global,
    Any,Nothing,None,Nullable,
  },
  sensitive=true,
  morecomment=[l]{\#},
  morestring=[b]",
}
\bfseries\color{BlueViolet},
\ttfamily\color{Gray},
\newcommand{\tx}{\text}
\newcommand{\thdash}{\nbd th}
\newcommand{\nbd}{\nobreakdash-\hspace{0pt}}
\newcommand{\writelabel}[1]{#1\def\@currentlabel{#1}}
\newcommand{\minwidthmathbox}[2]{%
  \mathmakebox[{\ifdim#1<\width\width\else#1\fi}]{#2}%
}
\newcommand{\tbf}{\bfseries}
\newcommand{\bbF}{\ensuremath{\mathbb{F}}}
\newcommand{\bbone}{\ensuremath{\mathds{1}}}
\newcommand{\cB}{\ensuremath{\mathcal{B}}}
\newcommand{\cE}{\ensuremath{\mathcal{E}}}
\newcommand{\cQ}{\ensuremath{\mathcal{Q}}}
\newcommand{\frake}{\ensuremath{\mathfrak{e}}}
\newcommand{\rmE}{\ensuremath{\mathrm{E}}}
\newcommand{\rmH}{\ensuremath{\mathrm{H}}}
\newcommand{\rmJ}{\ensuremath{\mathrm{J}}}
\newcommand{\rmL}{\ensuremath{\mathrm{L}}}
\newcommand{\rmM}{\ensuremath{\mathrm{M}}}
\newcommand{\rmO}{\ensuremath{\mathrm{O}}}
\newcommand{\rmS}{\ensuremath{\mathrm{S}}}
\newcommand{\rmT}{\ensuremath{\mathrm{T}}}
\newcommand{\llbrkt}{\llbracket}
\newcommand{\rrbrkt}{\rrbracket}
\newcommand{\mrelspace}[1]{\mathrel{\mspace{#1}}}
\let\rightarroworig\rightarrow
\renewcommand{\rightarrow}
  {\protect\relbar\mrelspace{-9.7mu}\rightarroworig}
\renewcommand{\hookrightarrow}
  {\protect\lhook\mrelspace{-3.1mu}\relbar\mrelspace{-11.9mu}\rightarroworig}
\renewcommand{\twoheadrightarrow}
  {\protect\rightarroworig\mrelspace{-15mu}\rightarroworig}
\let\leftarroworig\leftarrow
\renewcommand{\leftarrow}
  {\protect\leftarroworig\mrelspace{-9.7mu}\relbar}
\renewcommand{\longrightarrow}
  {\protect\relbar\mrelspace{-3.2mu}\relbar\mrelspace{-9.5mu}\rightarroworig}
\newcommand{\longtwoheadrightarrow}
  {\protect\relbar\mrelspace{-3mu}\rightarroworig\mrelspace{-15mu}\rightarroworig}
\newcommand{\xtwoheadrightarrow}[1]
  {\protect\xrightarrow{#1}\mrelspace{-15mu}\rightarroworig}
\newcommand{\xlongtwoheadrightarrow}[1]
  {\xtwoheadrightarrow{\minwidthmathbox{1.256em}{#1}}}
\newcommand{\ra}{\rightarrow}
\newcommand{\hra}{\hookrightarrow}
\newcommand{\thra}{\twoheadrightarrow}
\newcommand{\lra}{\longrightarrow}
\newcommand{\lthra}{\longtwoheadrightarrow}
\newcommand{\mto}{\mapsto}
\newcommand{\lmto}{\longmapsto}
\renewcommand{\Re}{\mathrm{Re}}
\renewcommand{\Im}{\mathrm{Im}}
\newcommand{\isdiv}{\mathop{\mid}}
\renewcommand{\pmod}[1]{\;(\mathrm{mod}\, #1)}
\newcommand{\Hom}{\operatorname{Hom}}
\newcommand{\id}{\mathrm{id}}
\newenvironment{psmatrix}{\left(\begin{smallmatrix}}{\end{smallmatrix}\right)}
\newcommand{\Mat}[1]{\operatorname{Mat}_{#1}}
\newcommand{\linspan}{\operatorname{span}}
\newcommand{\ZZ}{\ensuremath{\mathbb{Z}}}
\newcommand{\QQ}{\ensuremath{\mathbb{Q}}}
\newcommand{\RR}{\ensuremath{\mathbb{R}}}
\newcommand{\CC}{\ensuremath{\mathbb{C}}}
\newcommand{\GL}[1]{\ensuremath{\mathrm{GL}_{#1}}}
\newcommand{\SL}[1]{\ensuremath{\mathrm{SL}_{#1}}}
\newcommand{\std}{\ensuremath{\mathrm{std}}}
\newcommand{\sym}{\ensuremath{\mathrm{sym}}}
\newcommand{\HS}{\ensuremath{\mathbb{H}}}
\newcommand{\Ga}{\Gamma}
\newcommand{\ga}{\gamma}
\newcommand{\rank}{\mathrm{rank}}
\newcommand{\isotypGaN}[1]{\isotypGasub{#1}{N}}
\newcommand{\isotypGasub}[2]{\big[ e\big( \mfrac{#1}{#2} \big) \big]}
\newcommand{\rmFE}{\mathrm{FE}}
\newcommand{\rmfe}{\mathrm{fe}}
\newcommand{\QQab}{\QQ^{\mathrm{ab}}}
\newcommand{\Stab}{\mathrm{Stab}}
\newcommand{\Ind}{\mathrm{Ind}}
\newcommand{\Res}{\mathrm{Res}}
\newcommand{\ahol}{\mathrm{ahol}}
\newcommand{\lcm}{\mathrm{lcm}}
\newcommand{\headertitle}{{\normalfont%
  On the Computation of General Vector-valued Modular Forms
}}
\newcommand{\headerauthors}{%
  T.~Magnusson,
  M.~Raum%
}
\title{%
  On the Computation of General\\Vector-valued Modular Forms
}
\author{%
Tobias Magnusson \and
Martin Raum%
\thanks{The author was partially supported by Vetenskapsr\aa det Grant~2015-04139 and~2019-03551.}%
}
\begin{document}

\thispagestyle{scrplain}
\begingroup
\deffootnote[1em]{1.5em}{1em}{\thefootnotemark}
\maketitle
\endgroup

\begin{abstract}
\small
\noindent
{\tbf Abstract:}
We present and discuss an algorithm and its implementation that is capable of directly determining Fourier expansions of any vector-valued modular form of weight at least~$2$ associated with representations whose kernel is a congruence subgroup. It complements two available algorithms that are limited to inductions of Dirichlet characters and to Weil representations, thus covering further applications like Moonshine or Jacobi forms for congruence subgroups. We examine the calculation of invariants in specific representations via techniques from permutation groups, which greatly aids runtime performance. We explain how a generalization of cusp expansions of classical modular forms enters our implementation. After a heuristic consideration of time complexity, we relate the formulation of our algorithm to the two available ones, to highlight the compromises between level of generality and performance that each them makes.
\\[.3\baselineskip]
\noindent
\textsf{\textbf{%
  holomorphic modular forms%
}}%
\noindent
\hspace{0.3em}{\tiny$\blacksquare$}\hspace{0.3em}%
\textsf{\textbf{%
  Fourier expansions%
}}%
\noindent
\hspace{0.3em}{\tiny$\blacksquare$}\hspace{0.3em}%
\textsf{\textbf{%
  cusp expansions%
}}
\\[.2\baselineskip]
\noindent
\textsf{\textbf{%
  MSC Primary:
  11F30%
}}%
\hspace{0.3em}{\tiny$\blacksquare$}\hspace{0.3em}%
\textsf{\textbf{%
  MSC Secondary:
  11F11, 11F50%
}}
\end{abstract}

\Needspace*{4em}
\addcontentsline{toc}{section}{Introduction}
\markright{Introduction}
\lettrine[lines=2,nindent=.2em]{\tbf T}{he} vector-valued case appears late in the chronicles of computing modular forms. The main challenge for several decades was to elucidate the modularity conjecture and special values of~$\rmL$\nbd functions. While they persist, in the past two decades the demand for Fourier expansions of vector-valued modular forms has spiked. Their computational theory, however, has only recently started to unfold.

Via an induction construction (see~\cite{raum-2017}), vector-valued modular forms capture cusp expansions of scalar-valued ones, which lends them significance to classical questions. For instance, the computation of modular forms for congruence subgroups~$\Ga_{\mathrm{ns}}(N)$ of non-split Cartan type was until recently centered around the cusp expansion of modular forms for~$\Ga_1(N^2)$~\cite{mercuri-schoof-2020,assaf-2020-preprint}. Given the role of such groups in Mazur's Program~\cite{mazur-1977}, it was and is an important challenge to determine all cusp expansions of such modular forms, or alternatively to determine the corresponding vector-valued modular forms. Vector-valued modular forms that arise in this setting are associated with, for instance, the induction of Dirichlet characters from~$\Ga_0(N)$ to~$\SL{2}(\ZZ)$.

Work of Borcherds on theta lifts~\cite{borcherds-1998} further accentuates the need for vector-valued modular forms. In light of applications that rely on the concrete knowledge of Fourier expansions~\cite{scheithauer-2006,dittmann-salvati-manni-scheithauer-2021}, more efficient or more general algorithms promise progress in domains like vertex operator algebras or algebraic geometry. Vector-valued modular forms that arise in this setting are associated with Weil representations.

There are presently two algorithms available to compute vector-valued modular forms, one that covers inductions of Dirichlet characters and one that covers Weil representations. But some applications do not fall under the scope of either of them. These applications require vector-valued modular forms associated with more general kinds of representations. Most prominently, Moonshine features modular or Jacobi forms for signed permutation representations~\cite{gaberdiel-persson-ronellenfitsch-volpato-2013,cheng-duncan-harvey-2014}. Also cusp expansions of Jacobi forms yield modular forms for representations that are more generic. We subsume them all under the notion of twisted permutation representations in the sense of our Definition~\ref{def:twisted_perm_representation}.

The need for a more general algorithm to determine vector-valued modular forms has become imminent. Our main theorem asserts the existence of an algorithm that solves this problem if the kernel of the representation is a congruence subgroup. A more precise formulation is available in Theorem~\ref{thm:main_algorithm}.

\begin{maintheorem}
\label{mainthm:main_algorithm}
Fix an integral weight~$k \ge 2$ and a finite-dimensional, complex representation~$\rho$ of a subgroup~$\Ga$ of\/~$\SL{2}(\ZZ)$ whose kernel is a congruence subgroup. Then Algorithm~\ref{alg:main_algorithm} on page~\pageref{alg:main_algorithm} computes the Fourier expansion of a basis of\/~$\rmM_k(\rho)$ with algebraic coefficients up to any given precision.
\end{maintheorem}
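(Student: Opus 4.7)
The plan is to reduce the problem to scalar\nbd valued computations on the congruence subgroup $\Ga' := \ker(\rho)$, together with an invariants calculation over the finite quotient group $G := \Ga/\Ga'$. The guiding observation is that a holomorphic function $f \colon \bbH \to V$, with $V$ the space on which $\rho$ acts, satisfies the transformation law $f|_k \ga = \rho(\ga)\, f$ for every $\ga \in \Ga$ if and only if each coordinate of $f$ with respect to a fixed basis of $V$ lies in $\rmM_k(\Ga')$ and the resulting tuple of scalar modular forms is $G$\nbd invariant under the diagonal action combining the classical slash action on $\rmM_k(\Ga')$ with $\rho$ on $V$. Recorded as
\begin{equation*}
  \rmM_k(\rho) \;\cong\; \bigl( \rmM_k(\Ga') \otimes V \bigr)^G,
\end{equation*}
this splits the construction into three computable subproblems.

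First, I would compute a basis of $\rmM_k(\Ga')$ with algebraic Fourier expansions at $\infty$ up to the requested precision; this is a classical scalar\nbd valued task for the congruence subgroup $\Ga'$ and can be handled by standard machinery such as a decomposition into Eisenstein series and a modular\nbd symbols computation of cusp forms. Second, I would compute the matrix of the diagonal $G$\nbd action on the finite\nbd dimensional space $\rmM_k(\Ga') \otimes V$: the action of a coset representative $\ga$ on $V$ is $\rho(\ga)$ by definition, while on $\rmM_k(\Ga')$ it sends $f$ to $f|_k \ga$, whose Fourier expansion at $\infty$ is determined by the cusp expansion of $f$ at the cusp $\ga\infty$. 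Third, I would perform a kernel computation to isolate the $G$\nbd invariant subspace and read off the vector\nbd valued Fourier expansions from a basis of that subspace.

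Correctness of the output is immediate from the displayed isomorphism, and termination is clear because each of the three subproblems reduces to finitely many steps of linear algebra over an effective subfield of $\ov{\QQ}$. The main obstacle is the second subproblem: one needs uniformly precise cusp expansions of a basis of $\rmM_k(\Ga')$ at every cusp of $\Ga'$, together with a coherent normalization accounting for varying cusp widths and the automorphy factor $(c\tau + d)^k$. This is precisely what the paper's generalization of cusp expansions of classical modular forms is designed to deliver, and I expect it to be the conceptual centerpiece of the argument. A secondary but practically dominant obstacle appears when $\dim V$ is large, as in Moonshine or Jacobi\nbd form applications, because a naive invariants computation scales with $\dim \rmM_k(\Ga') \cdot \dim V$. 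For twisted permutation representations in the sense of Definition~\ref{def:twisted_perm_representation}, the action of $G$ on $V$ permutes basis vectors up to scalar twists, so invariants decompose along $G$\nbd orbits on the basis of $V$ and can be determined orbit by orbit using stabilizer data; this is where the permutation\nbd group techniques emphasized in the abstract provide the decisive algorithmic speedups.
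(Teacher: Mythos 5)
Your proposed decomposition $\rmM_k(\rho) \cong \bigl( \rmM_k(\Ga') \otimes V \bigr)^G$ with $\Ga' = \ker(\rho)$, $G = \Ga/\Ga'$ is conceptually sound, and is in effect the content of Lemma~\ref{la:invariants_are_modular_forms} specialized to $W = \rmM_k(\Ga')$. But the algorithm you build on it has a genuine gap in the first two subproblems, which you acknowledge but do not resolve. You need, for each member of a basis of $\rmM_k(\Ga')$ and for each coset $\ga \in G$, the algebraic Fourier expansion of $f|_k\ga$ at $\infty$ — equivalently, cusp expansions of a scalar basis at all cusps. You dismiss this as ``a classical scalar-valued task handled by standard machinery (Eisenstein series plus modular symbols)'', and then propose to patch the difficulty with ``the paper's generalization of cusp expansions.'' Neither claim holds. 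Modular symbols and the Eichler--Selberg trace formula deliver Fourier coefficients at the cusp $i\infty$ only; as the paper notes in Section~\ref{sec:alternative_algorithms}, for square-free level one can recover other cusp expansions via Atkin--Lehner operators, but for general $\Ga'$ this does not work, and indeed obtaining algebraic cusp expansions of a cuspidal basis is essentially equivalent in difficulty to the problem you are trying to solve. Moreover, Corollary~\ref{cor:diag_tw_perm_taction} and Section~\ref{ssec:infl_defl_torbits} (the ``deflation--inflation'' that you are gesturing at) do not compute cusp expansions of classical cusp forms at all: they provide a $T$-orbit bookkeeping device that shrinks the matrix $M$ in Algorithm~\ref{alg:main_algorithm}, which is an optimization, not a source of input data.

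The paper's actual resolution is to sidestep cusp forms entirely. Theorem~\ref{thm:raum_xia} (due to Raum--Xi\`a) shows that
\begin{equation*}
  \rmM_k(\rho) \;\cong\; \rmH^0\bigl( \cE_k(N) \otimes \rho \bigr) \;+\; \rmH^0\bigl( (\cE_l(N_0) \cdot \cE_{k-l}(N_0)) \otimes \rho \bigr)\text{,}
\end{equation*}
so that \emph{only products of Eisenstein series} occur as scalar ingredients, and their Fourier expansions at every cusp are given by explicit, classical algebraic formulas (Proposition~\ref{prop:abstract_eis} packages this as a surjection $\rho_N^\vee \thra \cE_k(N)$). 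Theorem~\ref{thm:main_algorithm_preparation} then composes the surjections $\Phi_\cE$, $\Phi_\times$, $\Phi_\Sigma$ to map a computable basis of invariants in $\rmH^0(\rho_N^\vee \otimes \rho) \oplus \rmH^0(\rho_{N_0}^\vee \otimes \rho_{N_0}^\vee \otimes \rho)$ onto $\rmM_k(\rho)$, and Theorem~\ref{thm:main_algorithm} turns this into Algorithm~\ref{alg:main_algorithm} using truncated Fourier expansions and a Sturm bound. If you want to repair your proposal, you would have to replace the step ``compute a basis of $\rmM_k(\Ga')$ by modular symbols and then its cusp expansions'' with something that produces a spanning set of scalar forms whose cusp expansions are algebraically explicit; this is exactly what the Eisenstein-product theorem supplies, and it is the missing ingredient, not a secondary optimization.
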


The theoretical foundation of Algorithm~\ref{alg:main_algorithm} and Theorem~\ref{mainthm:main_algorithm} is a result on products of Eisenstein series that span spaces of modular forms by Xià and one of the authors~\cite{raum-xia-2020}. The pathway to Algorithm~\ref{alg:main_algorithm} is prepared in Section~\ref{sec:main_algo} and in particular in Theorem~\ref{thm:main_algorithm_preparation}.

In order to gauge the relevance of Theorem~\ref{mainthm:main_algorithm}, we briefly highlight some of the milestones in the computation of vector-valued modular forms. Given the wide availability of Fourier expansions for scalar-valued modular forms, numerical integration around cusps on the real line was a common strategy for an extended period of time. It appears, for example, in an implementation of the~$\rmO(\log(p))$-algorithm to compute the~$p$\thdash\ Fourier coefficient of the Ramanujan~$\Delta$-function~\cite{edixhoven-couveignes-2011}. Like all numerical techniques, it comes with its own challenges connected to the fluctuation of modular forms on low enough horizontal lines. As for algebraic expressions, only cusp expansions of Eisenstein series were generally available~\cite{miyake-1989,diamond-shurman-2005}. Incidentally, Eisenstein series were also the first case for which an algebraic and vector-valued formulation was provided by Bruinier and Kuss~\cite{bruinier-kuss-2001} around~2001. The theory of Jacobi forms and theta blocks~\cite{gritsenko-skoruppa-zagier-2019-preprint} entered the picture around~2010, mostly in work by Poor, Yuen et al.~\cite{gritsenko-poor-yuen-2015,poor-shurman-yuen-2018}, and gave access to vector-valued modular forms associated with some Weil representations. The use of products of vector-valued Eisenstein series to our knowledge was first suggested in~\cite{raum-2017} in 2014, a variation of which was promptly adapted by Cohen~\cite{cohen-2019} to an implementation of classical modular forms~\cite{belabas-cohen-2018}. In 2018 Williams provided, as part of his PhD thesis, the foundation to an algorithm targeting Weil representations~\cite{williams-2018a} and showcased several applications~\cite{williams-2018b,williams-2019}. For historical accuracy, we remark that these vector-valued approaches are predated by the study of products of two scalar-valued Eisenstein series that goes back at least as far as to work of Rankin~\cite{rankin-1952}, and appears in several subsequent articles~\cite{kohnen-zagier-1984,kohnen-martin-2008}. We also record that when relaxing the restriction to products with at most two factors and focusing instead on Eisenstein series of small weight, early results are contained in the work of Borisov--Gunells~\cite{borisov-gunnells-2001,borisov-gunnells-2001b,borisov-gunnells-2003} and the strongest results known to the authors are due to Khuri-Makdisi~\cite{khuri-makdisi-2012}.

We have provided an implementation of Algorithm~\ref{alg:main_algorithm} in the programming language Julia based on the computer algebra packages Nemo/Hecke~\cite{fieker-hart-hofmann-johansson-2017}. We plan to make it also available via the Julia software repository. In the process of creating this implementation, we needed to develop several mathematical tools specific to vector-valued modular forms, which we share in Sections~\ref{sec:decompositions} and~\ref{sec:implementation}. Our algorithm requires the computation of homomorphisms between representations. In Section~\ref{ssec:computation_invariants}, we discuss how this computation can be facilitated by purely group theoretic calculations in the case of twisted permutation representations. In the case of~$\SL{2}(\ZZ)$-representations this reduces the heuristic time complexity from~$\rmO((N^4 \dim(\rho))^\kappa)$ to~$\rmO(N^{3+\epsilon} \dim(\rho)^2)$, where~$\kappa \approx 2.8$, practically, is explained in Section~\ref{ssec:time_complexity}. Further, we employ a deflation--inflation principle that parallels the theory of cusp expansions for classical modular forms, which we discuss in Section~\ref{ssec:infl_defl_torbits} based on Corollary~\ref{cor:diag_tw_perm_taction}. Its impact is considerable, as it reduces heuristically the number of relevant Fourier expansions from from $\rmO(N^{1+\epsilon})$ to~$\rmO(N^\epsilon)$.

The main take away from our implementation of Algorithm~\ref{alg:main_algorithm} and our heuristic analysis of its time complexity in Section~\ref{ssec:time_complexity} is that is the computation of homomorphisms dramatically dominates the runtime. We therefore suggest to develop algorithms that skip this step altogether or at least simplify it by introducing additional structure. In this light, it becomes particularly interesting to compare with the two already existing algorithms by Cohen and Williams. In Section~\ref{sec:alternative_algorithms}, we discuss their assumptions on the representation that vector-valued modular forms are associated with and how they leverage them to gain performance. We also include suggestions on how to relax their assumptions while maintaining the performance advantage. In future work, we hope to come back to this topic.

\section{Preliminaries}%
\label{sec:preliminaries}

We refer to Miyake's book~\cite{miyake-1989} and to the book of Diamond--Shurman~\cite{diamond-shurman-2005} for the basics of classical modular forms. In this section, we revisit vector-valued modular forms and recast some facts about classical ones in a representation theoretic light. We also include a summary of statements from representation theory that we will use. A primer on representation theory of finite groups can be found in Serre's book on representations of finite groups~\cite{serre-1977}, or alternatively Bump's book on Lie groups~\cite{bump-2013}.

We assume standard notation for modular groups, modular forms, and spaces of modular forms.

\subsection{Representation theory}

In this paper we work with finite dimensional, complex left-representations. Let~$G$ be a discrete group. We throughout identify complex representations of~$G$ with the corresponding left-module for the group algebra~$\CC[G]$. Given a representation~$\rho$, we write~$V(\rho)$ for its representation space and~$\rho^\vee$ for its dual. If~$\rho$ and~$\sigma$ are representations of the same group~$G$ that is clear by the context, we allow ourselves to suppress it from the Hom-space notation:
\begin{gather*}
  \Hom(\rho, \sigma)
=
  \Hom{}_G(\rho, \sigma)
\subseteq
  \Hom(V(\rho), V(\sigma))
\tx{.}
\end{gather*}

Given a finite dimensional representation~$\rho$ for a group~$G$, its~$0$\thdash\ cohomology is the space of invariants. We have
\begin{gather}
\label{eq:H0_are_invariants}
  \rmH^0(\rho)
=
  \big\{
  v \in V(\rho) \,:\,
  \forall g \in G .
  \rho(g) v = v
  \big\}
\tx{.}
\end{gather}

Given a finite index subgroup $H \subseteq G$ and an $H$\nbd representations~$\rho$, the induction of~$\rho$ to~$G$ is defined as
\begin{gather*}
  \Ind_H^G\, \rho
=
  \CC[G] \otimes_{\CC[H]} \rho
\tx{.}
\end{gather*}
The restriction of a~$G$\nbd representation~$\rho$ to~$H$ will be denoted by $\Res_H\, \rho = \Res_H^G\, \rho$. As functors, induction and restriction are adjoint via the Frobenius reciprocity (see p.~278 of~\cite{bump-2013}). More precisely, given an~$H$\nbd representation~$\rho$ and a $G$-representation~$\sigma$, we have
\begin{gather}
\label{eq:frobenius_reciprocity}
  \Hom{}_G\big( \Ind_H^G\,\rho,\, \sigma \big)
  \cong
  \Hom{}_H\big( \rho,\, \Res_H^G\,\sigma \big)
\tx{.}
\end{gather}

Mackey's Double Coset Theorem provides a decomposition of restriction--induction in terms of smaller induced representations (see p.~280~of~\cite{bump-2013}). Given two subgroups~$H,K \subseteq G$ and an~$H$\nbd representation~$\rho$, it asserts the isomorphism
\begin{gather}
\label{eq:mackey_double_coset}
  \Res_K^G\, \Ind_H^G\, \rho
\cong
  \bigoplus_{g \in H \backslash G \slash K}
  \Ind_{K \cap H^g}^K\, \rho^g
\tx{,}
\end{gather}
where~$H^g = g^{-1} H g$ is the conjugate of~$H$ and~$\rho^g$ is the pullback of~$\rho$ along the associated conjugation map from~$H^g$ to~$H$, whose action is given by~$\rho^g(h) = \rho(g h g^{-1})$.

We will need several specific representations of~$\SL{2}(\ZZ)$ and its subgroups. For positive integers~$N$, we have the congruence subgroups
\begin{align*}
  \Ga_0(N)
\;&=\;
  \big\{
  \begin{psmatrix} a & b \\ c & d \end{psmatrix} \in \SL{2}(\ZZ) \,:\,
  c \equiv 0 \,\pmod{N}
  \big\}
\tx{,}
\\
  \Ga_1(N)
\;&=\;
  \big\{
  \begin{psmatrix} a & b \\ c & d \end{psmatrix} \in \SL{2}(\ZZ) \,:\,
  c \equiv 0 \,\pmod{N},\,
  a,d \equiv 1 \,\pmod{N}
  \big\}
\tx{,}
\\
  \Ga(N)
\;&=\;
  \big\{
  \begin{psmatrix} a & b \\ c & d \end{psmatrix} \in \SL{2}(\ZZ) \,:\,
  b, c \equiv 0 \,\pmod{N},\,
  a,d \equiv 1 \,\pmod{N}
  \big\}
\tx{,}
\end{align*}
and we have the parabolic subgroup
\begin{gather*}
  \Ga_\infty
\;=\;
  \big\{
  \begin{psmatrix} a & b \\ 0 & d \end{psmatrix} \in \SL{2}(\ZZ)
  \big\}
\tx{.}
\end{gather*}

The one-dimensional trivial representation of~$\Ga \subseteq \SL{2}(\ZZ)$ will be written as~$\bbone$, suppressing the dependence on~$\Ga$ from our notation. A Dirichlet character~$\chi$ modulo~$N$ yields a one-dimensional representation of~$\Ga_0(N)$ via
\begin{gather}
\label{eq:def:dirichlet_type}
  \begin{psmatrix} a & b \\ c & d \end{psmatrix}
\lmto
  \chi(d)
\in
  \GL{1}(\CC)
\tx{.}
\end{gather}
We allow ourselves to equally denote this representation by~$\chi$. Given a positive integer~$N$ and a Dirichlet character~$\chi$ modulo~$N$, we define
\begin{gather}
\label{eq:def:rhoN_rhochi}
  \rho_N
\;=\;
  \Ind_{\Ga_1(N)}^{\SL{2}(\ZZ)}\, \bbone
\quad\tx{and}\quad
  \rho_\chi
\;=\;
  \Ind_{\Ga_0(N)}^{\SL{2}(\ZZ)}\, \chi
\tx{.}
\end{gather}
The kernel of both~$\rho_N$ and~$\rho_\chi$ is the normal core~$\Ga(N)$ of~$\Ga_0(N)$ and~$\Ga_1(N)$ in~$\SL{2}(\ZZ)$. We also record that we have~$\rho_N^\vee \cong \rho_N$ and $\rho_\chi^\vee \cong \rho_{\overline{\chi}}$.

\subsection{Vector-valued modular forms}

We write~$S = \begin{psmatrix} 0 & -1 \\ 1 & 0 \end{psmatrix}$ and~$T = \begin{psmatrix} 1 & 1 \\ 0 & 1 \end{psmatrix}$. We let~$\Ga$ be a finite index subgroup of $\SL{2}(\ZZ)$. An arithmetic type~$\rho$ for~$\Ga$ is a finite-dimensional, complex representation of~$\Ga$. We say that~$\rho$ is a congruence type if its kernel is a congruence subgroup. We refer to the smallest possible possible level of its kernel as the level of~$\rho$. Note that congruence types factor through the quotient group~$\Ga \slash \Ga(N)$ and thus are effectively representations of finite groups. We write~$V(\rho)$ for the representation space of~$\rho$.

We write~$\HS = \{ \tau \in \CC \,:\, \Im(\tau) > 0 \}$ for the Poincaré upper half plane. Let~$k$ be an integer and~$\rho$ an arithmetic type for~$\Ga$. We say that a function $f : \HS \ra V(\rho)$ has moderate growth if there exists an~$a \in \RR$ and a norm~$\|\,\cdot\,\|$ on~$V(\rho)$ such that for all $\ga \in \SL{2}(\ZZ)$ we have uniformly in~$\Re(\tau)$ that
\begin{gather*}
  \big\| f(\ga \tau) \big\|
\ll
  \rmO\big( \Im(\tau)^a \big)
\quad
  \tx{as\ }
  \Im(\tau) \ra \infty
\tx{.}
\end{gather*}

We define a vector-valued slash action for functions~$f : \HS \ra V(\rho)$ by
\begin{gather*}
  \big( f \big|_{k,\rho}\, \ga \big)(\tau)
=
  (c\tau+d)^{-k}\,
  f\big( \mfrac{a\tau + b}{c\tau + d} \big)
\tx{,}\quad
\tx{where\ }
  \ga
=
  \begin{psmatrix} a & b \\ c & d \end{psmatrix}
\in
  \Ga
\tx{.}
\end{gather*}
Now a vector-valued modular form of weight~$k$ and type~$\rho$ is a holomorphic function~$f : \HS \ra V(\rho)$ of moderate growth that satisfies
\begin{gather*}
  \forall \ga \in \Ga \,:\,
  f \big|_{k,\rho}\, \ga = f
\tx{.}
\end{gather*}
The vector-space of such  forms of weight $k$ and type $\rho$ will be written as~$\rmM_k(\rho)$. Note that for every~$k \in \ZZ$, there is some~$\rho$ such that~$\rmM_k(\rho) \ne \{0\}$, for instance one of the symmetric power representations~$\sym^{|k|+3}$ or~$\sym^{|k|+4}$ of~$\SL{2}(\ZZ)$.

The Fourier coefficients of a modular form~$f$ will be denoted by~$c(f;\,n) \in V(\rho)$, $n \in \QQ$. We record a Sturm bound for vector-valued modular forms.
\begin{proposition}[{see~\cite{sturm-1987,bruinier-raum-2015}}]
\label{prop:sturm_bound}
Let~$k$ and~$N$ be positive integers, and~$\rho$ a congruence type for~$\SL{2}(\ZZ)$. Further, let $f \in \rmM_k(\rho)$ be a modular form of weight~$k$ and type~$\rho$. Then we have~$f = 0$ if
\begin{gather*}
  \forall n \in \QQ,\, 0 \le n \leq k \slash 12 \,:\,
  c(f;\, n) = 0
\tx{.}
\end{gather*}
\end{proposition}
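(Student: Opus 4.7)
The plan is to reduce the vector-valued Sturm bound to the classical valence formula on the modular curve~$X(H)$ for $H = \ker \rho$, which by assumption is a congruence subgroup.

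First, since $\rho$ factors through the finite quotient~$\SL{2}(\ZZ) / H$, semisimplicity realises $\rho$ as a subrepresentation of a direct sum of copies of the regular representation~$\Ind_H^{\SL{2}(\ZZ)}\, \bbone$. The embedding is $\SL{2}(\ZZ)$\nbd equivariant and induces an embedding $\rmM_k(\rho) \hookrightarrow \rmM_k(\Ind_H^{\SL{2}(\ZZ)}\, \bbone)^{\oplus m}$ compatible with Fourier expansions, so it suffices to prove the statement for $\rho = \Ind_H^{\SL{2}(\ZZ)}\, \bbone$. For this~$\rho$, Frobenius reciprocity~\eqref{eq:frobenius_reciprocity} identifies~$\rmM_k(\rho)$ with~$\rmM_k(H)$: a vector-valued~$f$ corresponds to the scalar form~$F \in \rmM_k(H)$ given by the coordinate of~$f$ at the trivial coset, and the remaining coordinates are~$F|_k\, \gamma^{-1}$ as~$\gamma$ ranges over coset representatives.

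Next I would translate the vanishing hypothesis on~$c(f;n)$ into vanishing of~$F$ at the cusps of~$H$. The left action of~$T$ on $\SL{2}(\ZZ)/H$ decomposes it into orbits naturally in bijection with the cusps of~$H$, the orbit length equalling the width~$w_c$ of the corresponding cusp. For a cusp~$c$ represented by~$\gamma^{-1}\infty$, the components of~$f$ indexed by the associated orbit assemble via a Vandermonde transform in~$w_c$\nbd th roots of unity into the Fourier expansion of~$F|_k\,\gamma^{-1}$ in the variable~$q^{1/w_c}$. Vanishing of~$c(f;n)$ for all rational~$0 \le n \le k/12$ is thus equivalent to~$F|_k\,\gamma^{-1}$ having trivial~$q^{1/w_c}$\nbd coefficients up to exponent~$\lfloor k w_c / 12 \rfloor$, i.e., to~$F$ vanishing at~$c$ to order strictly greater than~$k w_c / 12$ in the local uniformiser of~$X(H)$.

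Finally, summing over all cusps and using the standard identity $\sum_c w_c = \mu_H$, where~$\mu_H$ is the index of~$H$ in~$\mathrm{PSL}_2(\ZZ)$, yields $\sum_c \mathrm{ord}_c F > k \mu_H / 12$. The valence formula on~$X(H)$ for a nonzero holomorphic weight-$k$ modular form asserts $\sum_P \mathrm{ord}_P F = k \mu_H / 12$ with all contributions non-negative, contradicting the strict inequality unless~$F = 0$; hence~$f = 0$.

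The principal technical obstacle lies in the second paragraph: one must verify that the root-of-unity phases attached to coordinates in a single $T$-orbit never conspire to annihilate a vector~$c(f;n)$ while the underlying scalar Fourier coefficient of~$F$ is nonzero. This reduces to invertibility of a Vandermonde matrix in roots of unity and is routine. A secondary nuisance is the correct handling of~$-I$ and elliptic points in the valence formula when~$-I \notin H$ or~$k$ is odd; this is standard and can be side-stepped by first passing to~$H \cap \Ga(N)$ for large enough~$N$ and then descending.
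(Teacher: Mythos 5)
The paper offers no proof of Proposition~\ref{prop:sturm_bound}, only the citations to Sturm and to Bruinier--Raum, so there is nothing in the text to compare your argument against line by line. Your reduction to the valence formula on~$X(H)$ with~$H = \ker\rho$ is the standard route to this bound, and the overall plan is sound: reduce to~$\rho = \Ind_H^{\SL{2}(\ZZ)} \bbone$, identify~$\rmM_k(\rho)$ with~$\rmM_k(H)$ by Frobenius reciprocity, translate the hypothesis into vanishing to order~$> k w_c / 12$ at each cusp~$c$ measured in the local uniformizer~$q^{1/w_c}$, sum against~$\sum_c w_c = \mu_H$, and contradict the valence formula. The key structural point that makes the bound~$k/12$ level-independent --- the cusp widths are exactly the lengths of the~$T$-orbits on~$\SL{2}(\ZZ)/H$ --- is captured correctly.

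One remark: the ``principal technical obstacle'' you isolate (a possible conspiracy of root-of-unity phases destroying the equivalence between vanishing of~$c(f;n)$ and vanishing of the scalar coefficient) is not actually an obstacle and no Vandermonde inversion is needed. Vanishing of the vector~$c(f;n) \in V(\rho)$ is by definition vanishing of each coordinate~$c_\gamma(n)$. Within a~$T$-orbit the components are related by~$f_{T^j\gamma}(\tau) = f_\gamma(\tau \pm j)$, so~$c_{T^j\gamma}(n) = e(\pm jn)\, c_\gamma(n)$; these all vanish simultaneously for a given~$n$, and the vanishing of a single representative's coefficient is what feeds into~$\mathrm{ord}_c F$. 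The issue you should instead watch is the one you flag as ``secondary'': the normalization of the valence formula for~$-I \notin H$ or~$k$ odd. Your proposed fix of passing to~$\Gamma(N) \subseteq H$ does not by itself restore~$-I$ for~$N > 2$; the cleaner route is either to treat the two cases~$\rho(-I) = \pm \id$ separately (in one of them~$\rmM_k(\rho) = 0$ for parity reasons) or to quote the valence formula in the form that already accounts for~$\pm I$, as in Diamond--Shurman. With that repair the argument is complete.
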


\subsection{Eisenstein series}%
\label{ssec:prelim:eisenstein}

In this section, we recall the definition of Eisenstein series for~$\Ga(N)$ and show how they are connected to the arithmetic type~$\rho_N$. We write~$\rmE_k(\Ga)$ for the space of Eisenstein series for a finite index subgroup~$\Ga \subseteq \SL{2}(\ZZ)$.

Given integers~$k > 3$, $N \ge 1$, and~$c, d$ we have a lattice Eisenstein series
\begin{gather}
\label{eq:lattice_eis}
  G_{k,N,c,d}(\tau)
=
  \sum_{\substack{(c',d') \in \ZZ^2 \setminus \{(0,0)\}\\
                  (c',d') \equiv (c,d) \pmod{N}}}
  (c'\tau + d')^{-k}
\in
  \rmE_k(\Ga(N))
\tx{.}
\end{gather}
Since~\eqref{eq:lattice_eis} only depends on~$c$ and~$d$ modulo~$N$, we identify them with their congruence class. In the cases of the weights $1$ and $2$, one obtains a similar Eisenstein series by analytic continuation (see Chapter~7 of~\cite{miyake-1989}). While the weight~$1$ Eisenstein series are holomorphic, the weight~$2$ Eisenstein in general are almost holomorphic. We reserve the notation~$G_{2,N,c,d}$ for the unique holomorphic linear combination~$G^\ahol_{2,N,c,d} + \nu_N G_{2,1,0,0}$, $\nu_N \in \CC$, where~$G^\ahol_{k,N,c,d}$ is the almost holomorphic Eisenstein series. The Fourier expansion of~\eqref{eq:lattice_eis} and its extension to weight~$1$ and~$2$ is known~\cite{diamond-shurman-2005,miyake-1989}. 

An alternative definition of Eisenstein series uses an average over cosets. For~$k > 2$ and~$N \ge 1$, we define
\begin{gather}
  E_{k,N}(\tau)
=
  \sum_{\ga \in \Ga_\infty \backslash \Ga_1(N)}
  1 \big|_k\, \ga
\tx{.}
\end{gather}
As in the case of~$G_{k,N,c,d}$, one extends this via analytic continuation to weight~$1$ and~$2$. We again reserve the notation~$E_{2,N}$ for the holomorphic linear combination~$E^\ahol_{2,N} + \nu_N E_{2,1}$, $\nu_N \in \CC$, where~$E^\ahol_{2,N}$ is the almost holomorphic Eisenstein series.

Given a positive integer~$k$, we let~$\cE_k(N) = \rmE_k(\Ga(N))$ be the space spanned by~$E_{k,N} |_k \ga$ as~$\ga$ runs through~$\SL{2}(\ZZ)$, and set~$\cE_0(N) = \CC$. Now if~$k > 0$, then~$G_{k,N,c,d}$ and~$E_{k,N} |_k \ga$ are nonzero scalar multiplies of one another if~$\ga = \begin{psmatrix} a & b \\ c & d \end{psmatrix} \in \SL{2}(\ZZ)$. In particular, we have
\begin{gather}
\label{eq:eisenspan}
  \cE_k(N)
=
  \linspan \CC \big\{
  G_{k,N,c,d} \,:\,
  0 \le c,d < N, \gcd(c,d,N)=1
  \big\}
\tx{.}
\end{gather}

Per its definition,~$\cE_k(N)$ naturally carries the structure of an~$\SL{2}(\ZZ)$\nbd representation, on which~$\ga$ acts from the left on~$f$ as~$f |_k \ga^{-1}$. The next proposition states the resulting connection between~$\cE_k(N)$ and the arithmetic type~$\rho_N$ defined in~\eqref{eq:def:rhoN_rhochi}. To state it explicitly, we fix the natural basis~$\frake_\ga = \ga \otimes 1$, $\ga \in \SL{2}(\ZZ) \slash \Ga_1(N)$, of~$V(\rho_N)$, where we allow ourselves to identify cosets with their representatives. It gives rise to a dual basis~$\frake_\ga^\vee$ of~$V(\rho_N^\vee)$.

\begin{proposition}
\label{prop:abstract_eis}
Let~$k$ and $N$ be positive integers. Then the following defines a surjective homomorphism of representations:
\begin{gather}
\label{eq:prop:abstract_eis}
  \rho_N^\vee
\lthra
  \cE_k(N)
\tx{,}\quad
  \frake_\ga^\vee
\lmto
  G_{k,N,c,d}
\quad\tx{with\ }
  \ga^{-1} = \begin{psmatrix} a & b \\ c & d \end{psmatrix}
\tx{.}
\end{gather}
\end{proposition}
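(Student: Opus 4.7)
The plan is to verify the three things the proposition asserts in turn: that the formula in~\eqref{eq:prop:abstract_eis} is well defined on cosets, that the resulting linear map intertwines the two $\SL{2}(\ZZ)$-actions, and that its image is all of~$\cE_k(N)$.

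First I would check well-definedness. A basis vector~$\frake_\ga^\vee$ depends only on the coset $\ga\Ga_1(N)$, so I need to show that the bottom row of $\ga^{-1}$ modulo~$N$ is unchanged when~$\ga$ is replaced by $\ga h$ with $h \in \Ga_1(N)$. Writing $(\ga h)^{-1} = h^{-1}\ga^{-1}$ and using that $h^{-1}$ has bottom row congruent to $(0,1)$ modulo~$N$, a direct multiplication shows that the bottom row of $h^{-1}\ga^{-1}$ is congruent to that of~$\ga^{-1}$ modulo~$N$. Since $G_{k,N,c,d}$ depends only on $(c,d) \bmod N$, the assignment is consistent across coset representatives.

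Next I would establish that the map intertwines the actions. On the induction side, the standard left action on $\rho_N$ sends $\frake_\ga = \ga \otimes 1$ to $\frake_{g\ga}$, and hence by duality $g\cdot\frake_\ga^\vee = \frake_{g\ga}^\vee$. Under the proposed map this is sent to~$G_{k,N,c',d'}$, where $(c',d')$ is the bottom row of $(g\ga)^{-1} = \ga^{-1}g^{-1}$, which is just $(c,d)g^{-1}$. On the Eisenstein side, the action of~$g$ is $f \mapsto f|_k g^{-1}$, and the key computation is the classical transformation
\begin{gather*}
  G_{k,N,c,d} \big|_k\, \gamma
=
  G_{k,N,(c,d)\gamma}
\quad\tx{for\ }\gamma \in \SL{2}(\ZZ)\tx{,}
\end{gather*}
which follows by the change of summation variables $(c'',d'') \mapsto (c'',d'')\gamma$ in~\eqref{eq:lattice_eis}. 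Applying this with $\gamma = g^{-1}$ yields exactly $G_{k,N,(c,d)g^{-1}}$, matching the induction side. For $k = 1$ and $k = 2$ this identity passes through analytic continuation, with the holomorphic projection that defines~$G_{2,N,c,d}$ being preserved because the correction term~$\nu_N G_{2,1,0,0}$ is $\SL{2}(\ZZ)$-invariant.

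For surjectivity I invoke~\eqref{eq:eisenspan}: it suffices to hit every $G_{k,N,c,d}$ with $\gcd(c,d,N) = 1$. Any such primitive pair $(c,d)$ modulo~$N$ lifts to a coprime pair $(c',d') \in \ZZ^2$, which can be completed to a matrix in $\SL{2}(\ZZ)$; this matrix serves as $\ga^{-1}$ for a coset whose image under~\eqref{eq:prop:abstract_eis} is the desired Eisenstein series. The main obstacle, modest as it is, lies in verifying the intertwining step: one must track the two inversions carefully (the $\ga \mapsto \ga^{-1}$ in the definition of the map and the $g \mapsto g^{-1}$ in the slash action) to see that they conspire to give the same right multiplication of~$(c,d)$ by~$g^{-1}$ on both sides.
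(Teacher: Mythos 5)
Your proof is correct and follows essentially the same approach as the paper's: well-definedness via the bottom row of $\ga^{-1}$ depending only on the coset of $\ga$, intertwining via the classical transformation law for the lattice Eisenstein series $G_{k,N,c,d}$ under the slash action, and surjectivity from~\eqref{eq:eisenspan}. The minor elaborations you add — the explicit check that a primitive pair $(c,d)\bmod N$ lifts to a row of an $\SL{2}(\ZZ)$ matrix, and the remark that the identity survives analytic continuation to weights $1$ and $2$ with the holomorphic correction $\nu_N G_{2,1,0,0}$ being $\SL{2}(\ZZ)$-invariant — are reasonable supplements, but the skeleton is the same as the paper's.
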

\begin{proof}
We first check that~\eqref{eq:prop:abstract_eis} is well-defined. The left column of a left-coset representative~$\ga \in \SL{2}(\ZZ) \slash \Ga_1(N)$ is unique modulo~$N$. We conclude that the bottom row of~$\ga^{-1}$ is unique modulo~$N$, and hence~$G_{k,N,c,d}$ in~\eqref{eq:prop:abstract_eis} only depends on the coset of~$\ga$.

As a linear map the given homomorphism is surjective because of~\eqref{eq:eisenspan}. It remains to verify that it is a homomorphism of representations. We let~$\ga, \delta \in \SL{2}(\ZZ)$. Note that we have by definition of the dual
\begin{gather*}
  \rho_N^\vee(\delta)\, \frake_\ga^\vee
=
  \frake_\ga^\vee \circ \rho_N(\delta^{-1})
=
  \frake_{\delta \ga}^\vee
\tx{.}
\end{gather*}
As for the Eisenstein series, we have (see p.~111 of~\cite{diamond-shurman-2005})
\begin{gather*}
  G_{k,N,c,d} \big|_k \delta^{-1}
=
  G_{k,N,c',d'}
\quad\tx{with\ }
  \begin{psmatrix} a' & b' \\ c' & d' \end{psmatrix}
=
  \begin{psmatrix} a & b \\ c & d \end{psmatrix} \delta^{-1}
\tx{.}
\end{gather*}

We combine these equalities to find that
\begin{gather*}
  \rho_N^\vee(\delta)\, \frake_\ga^\vee
=
  \frake_{\delta \ga}^\vee
\lmto
  G_{k,N,c',d'}
=
  G_{k,N,c,d} \big|_k \delta^{-1}
=
  \delta\, G_{k,N,c,d}
\tx{,}
\end{gather*}
where the right hand side features the action of~$\delta$ on the left-representation~$\cE_k(N)$, and
\begin{gather*}
  \ga^{-1}
=
  \begin{psmatrix} a & b \\ c & d \end{psmatrix} 
\tx{,}\quad
  (\delta \ga)^{-1}
=
  \ga^{-1} \delta^{-1}
=
  \begin{psmatrix} a' & b' \\ c' & d' \end{psmatrix}
\tx{.}
\end{gather*}
This confirms that the map~\eqref{eq:prop:abstract_eis} intertwines the action of~$\delta$ and thus finishes the proof.
\end{proof}

\subsection{Products of Eisenstein series}

We will need the next statement of~\cite{raum-xia-2020}. It features tensor products of an arithmetic type~$\rho$ with spaces of Eisenstein series~$\cE_k(N)$. Recall from Section~\ref{ssec:prelim:eisenstein} that we view them as representations for~$\SL{2}(\ZZ)$. Also recall from~\eqref{eq:H0_are_invariants} that the~$0$\thdash\ cohomology describes invariant vectors.

The next lemma allows us to view specific invariants as modular forms.
\begin{lemma}
\label{la:invariants_are_modular_forms}
Consider an arithmetic type~$\rho$ and a subspace~$W \subseteq \rmM_k(\Ga)$ for some weight~$k$ and finite index subgroup~$\Ga \subseteq \SL{2}(\ZZ)$. We assume that~$W$ is stable under the action of\/~$\SL{2}(\ZZ)$ by the weight\nbd$k$ slash action and view it as a left representation via~$(\ga,f) \mto f |_k \ga^{-1}$. Then we have the map
\begin{gather}
\label{eq:la:invariants_are_modular_forms}
  \rmH^0\big( W \otimes \rho \big)\;
\;\lra\;
  \rmM_k(\rho)
\tx{,}\quad
  \sum_i f_i \otimes v_i
\lmto
  \big( \tau \mto \sum_i f_i(\tau) v_i \big)
\tx{.}
\end{gather}
\end{lemma}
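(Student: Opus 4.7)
The plan is to unpack the definitions and observe that the invariance condition on $\sum_i f_i \otimes v_i$ is precisely what translates into the vector-valued modular transformation law for the function $\tau \mapsto \sum_i f_i(\tau) v_i$. The main work is bookkeeping with the slash action and checking compatibility of the diagonal $\Ga$-action on $W \otimes \rho$ with the action encoded in~$\rho$.

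First, I would verify that the map in~\eqref{eq:la:invariants_are_modular_forms} is well-defined. As a bilinear map $W \times V(\rho) \to \{f : \HS \to V(\rho)\}$, sending $(f, v)$ to $\tau \mto f(\tau) v$, it factors through the tensor product, and then we just restrict to $\rmH^0(W \otimes \rho) \subseteq W \otimes V(\rho)$. That the target of the restriction lies among holomorphic functions of moderate growth is immediate from the same properties of the~$f_i \in W \subseteq \rmM_k(\Ga)$, since $V(\rho)$ is finite-dimensional and we only take finite linear combinations.

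The heart of the proof is the modular transformation law. Fix $\ga \in \Ga$. The hypothesis that $F := \sum_i f_i \otimes v_i$ lies in $\rmH^0(W \otimes \rho)$ unpacks as
\begin{gather*}
  \sum_i (f_i \big|_k \ga^{-1}) \otimes \rho(\ga)\, v_i
  \;=\;
  \sum_i f_i \otimes v_i
\tx{,}
\end{gather*}
using the stipulated action $(\ga, f) \mto f |_k \ga^{-1}$ on~$W$. Applying the contraction map and evaluating at $\tau$, the left hand side becomes $\sum_i (f_i |_k \ga^{-1})(\tau)\, \rho(\ga)\, v_i = \rho(\ga) (-c\tau+a)^{-k} \sum_i f_i(\ga^{-1}\tau) v_i$, which is exactly $(f |_{k,\rho}\, \ga^{-1})(\tau)$ under the vector-valued slash action, where $f$ denotes the image of~$F$. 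Since this holds for every $\ga \in \Ga$ and $\Ga$ is a group, we obtain $f |_{k,\rho}\, \ga = f$ for all $\ga \in \Ga$.

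I do not anticipate a real obstacle; the argument is essentially definition-chasing. The only subtlety to watch for is the convention $\ga \cdot f = f |_k \ga^{-1}$ that turns the right slash action on $W$ into a left action, and correspondingly the use of $\ga^{-1}$ versus $\ga$ on the $\rho$-side; handling these consistently is what produces the matching factor $\rho(\ga)$ in the transformation law.
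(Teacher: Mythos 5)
Your proof is correct and takes essentially the same route as the paper's: both arguments amount to the observation that evaluating the invariance condition $\ga\cdot\sum_i f_i\otimes v_i=\sum_i f_i\otimes v_i$ under the contraction map is exactly the vector-valued transformation law for $\sum_i f_i v_i$. The paper runs the computation starting from $(f|_{k,\rho}\,\ga)(\tau)$ and reduces it to the invariance condition, while you unpack invariance and arrive at $(f|_{k,\rho}\,\ga^{-1})(\tau)=f(\tau)$; these are the same manipulation in opposite directions, and your bookkeeping of the $\ga$ versus $\ga^{-1}$ conventions (and the implicit $\rho(\ga)^{-1}$ factor in the vector-valued slash action, which the paper's displayed formula accidentally omits but which its proof clearly uses) is consistent.
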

\begin{proof}
We let~$\psi$ denote the map in~\eqref{eq:la:invariants_are_modular_forms}. Consider an element~$\sum_i f_i \otimes v_i$, $f_i \in W$, $v_i \in V(\rho)$, of the left hand side of~\eqref{eq:la:invariants_are_modular_forms} and its image~$f = \sum_i f_i v_i$ under~$\psi$. Then the weight\nbd$k$ and type\nbd$\rho$ slash action of~$\ga \in \SL{2}(\ZZ)$ on~$f$ yields
\begin{align*}
  \big( f \big|_{k,\rho}\, \ga \big)(\tau)
&{}=
  \sum_i
  \big( f_i \big|_k \,\ga \big)(\tau)\,
  \rho\big( \ga^{-1} \big) v_i
=
  \sum_i 
  \psi\Big(
   f_i \big|_k \,\ga \,\otimes\, \rho\big( \ga^{-1} \big) v_i
  \Big)(\tau)
\\
&{}=
  \psi\Big(
  \ga^{-1}\,
  \sum_i f_i \otimes v_i
  \Big)(\tau)
=
  \psi \big(\sum_i f_i \otimes v_i\big)(\tau)
=
  f(\tau)
\tx{.}
\end{align*}
\end{proof}

With Lemma~\ref{la:invariants_are_modular_forms} in mind, the next theorem, due to Xià and the second author~\cite{raum-xia-2020}, allows us to identify the invariants on the right hand side of~\eqref{eq:raum_xia} with modular forms.
\begin{theorem}
\label{thm:raum_xia}
Let~$k$, $l$ be integers with~$k \ge 2$ and~$1 \le l \le k - 1$. Fix a congruence type~$\rho$ of level~$N$. Then there is a positive integer~$N_0$ with~$N \isdiv N_0$ such that under the map in Lemma~\ref{la:invariants_are_modular_forms} we have
\begin{gather}
\label{eq:raum_xia}
  \rmM_k(\rho)
\cong
  \rmH^0\big( \cE_k(N) \otimes \rho \big)
  +
  \rmH^0\big( (\cE_l(N_0) \cdot \cE_{k-l}(N_0) ) \otimes \rho \big)
\tx{.}
\end{gather}
\end{theorem}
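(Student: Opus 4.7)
The plan is to transfer the statement to one about classical modular forms for a principal congruence subgroup~$\Gamma(N_0)$ through which $\rho$ factors, invoke a spanning result expressing such spaces as sums of Eisenstein series and products of two Eisenstein series of complementary weights, and finally descend the Eisenstein summand from level~$N_0$ back to level~$N$.

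First, for any $N_0$ with $\Gamma(N_0) \subseteq \ker(\rho)$ I would establish the isomorphism
\begin{gather*}
\rmM_k(\rho) \cong \rmH^0\big( \rmM_k(\Gamma(N_0)) \otimes \rho \big),
\end{gather*}
where $\rmM_k(\Gamma(N_0))$ carries the $\SL{2}(\ZZ)$-structure $(\ga,f) \mto f|_k \ga^{-1}$ and the map is the one of Lemma~\ref{la:invariants_are_modular_forms}. Injectivity follows from linear independence in any basis $\{v_i\}$ of $V(\rho)$. For surjectivity, expand $f \in \rmM_k(\rho)$ as $f = \sum_i f_i v_i$; the transformation law under $\Gamma(N_0) \subseteq \ker(\rho)$ places each $f_i$ in $\rmM_k(\Gamma(N_0))$, and the $\SL{2}(\ZZ)$-invariance of $\sum_i f_i \otimes v_i$ is equivalent to the full type-$\rho$ transformation law for~$f$.

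Second, I would invoke the key spanning input: for $N_0$ sufficiently divisible depending on $k$, $l$, and $N$,
\begin{gather*}
\rmM_k(\Gamma(N_0)) = \cE_k(\Gamma(N_0)) + \cE_l(N_0) \cdot \cE_{k-l}(N_0)
\end{gather*}
as $\SL{2}(\ZZ)$-stable subspaces of holomorphic functions on~$\HS$. This is the arithmetic-analytic heart of the result, stating that the cuspidal complement in level $N_0$ is covered by products of two Eisenstein series of complementary weights. The functor $\rmH^0(\,\cdot\, \otimes \rho)$ commutes with sums, which upon passing to invariants gives the analogue of~\eqref{eq:raum_xia} with $\cE_k(\Gamma(N_0))$ in place of $\cE_k(N)$.

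Third, I would descend the first summand by showing $\rmH^0(\cE_k(\Gamma(N_0)) \otimes \rho) = \rmH^0(\cE_k(N) \otimes \rho)$. Under the natural isomorphism $\rmH^0(W \otimes \rho) \cong \Hom(\rho^\vee, W)$ of $\SL{2}(\ZZ)$-representations, every such homomorphism has image contained in the $\Gamma(N)$-invariants of the target, because $\rho^\vee$ factors through $\SL{2}(\ZZ)/\Gamma(N)$; those invariants coincide with $\cE_k(N)$, since an Eisenstein series transforming trivially under $\Gamma(N)$ is already a modular form, and hence an Eisenstein series, for $\Gamma(N)$. The main obstacle is the spanning statement of step two: it requires substantial analytic input and governs how deep $N_0$ must be taken, whereas steps one and three are formal consequences of Lemma~\ref{la:invariants_are_modular_forms} together with representation-theoretic reciprocity.
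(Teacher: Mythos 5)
The paper does not prove Theorem~\ref{thm:raum_xia}: it attributes the statement to the reference~\cite{raum-xia-2020} and quotes it as a black box, so there is no internal proof to compare your argument against. Judged on its own merits, your reduction is sound and is, in spirit, the right way to derive the vector-valued statement from a scalar spanning theorem. Step one, the identification~$\rmM_k(\rho) \cong \rmH^0\big(\rmM_k(\Gamma(N_0)) \otimes \rho\big)$, is correct: injectivity is immediate from the choice of a basis of~$V(\rho)$, and for surjectivity the component functions of~$f \in \rmM_k(\rho)$ lie in~$\rmM_k(\Gamma(N_0))$ since~$\Gamma(N_0) \subseteq \ker\rho$, while~$\Gamma$-invariance of~$\sum_i f_i \otimes v_i$ is equivalent to the type-$\rho$ functional equation. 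Passing~$\rmH^0(\,\cdot\,\otimes\rho)$ across the sum in step two requires that invariants preserve sums of subrepresentations; this is justified by semi-simplicity, since every representation in sight factors through the finite quotient~$\Gamma \slash \Gamma(N_0)$, and you should say so explicitly.

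Two minor imprecisions in step three. First, $\rho^\vee$ is a representation of~$\Gamma$ and factors through~$\Gamma \slash \Gamma(N)$, not through~$\SL{2}(\ZZ) \slash \Gamma(N)$; the argument is unaffected, since only triviality on~$\Gamma(N) \subseteq \Gamma$ is used. Second, the assertion that an element of~$\cE_k(N_0)$ which is~$\Gamma(N)$-invariant ``is already a modular form, and hence an Eisenstein series, for~$\Gamma(N)$'' is correct but should not be waved through: it uses that the Eisenstein subspace is the Petersson-orthogonal complement of cusp forms and that~$\rmS_k(\Gamma(N)) \subseteq \rmS_k(\Gamma(N_0))$, which is not automatic for~$k \le 2$ where the Petersson product degenerates; since the paper has already normalized the weight-$2$ Eisenstein space by fixing the holomorphic linear combination, this can be patched but deserves a sentence. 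The substantive gap, which you correctly flag, is that the spanning identity~$\rmM_k(\Gamma(N_0)) = \cE_k(N_0) + \cE_l(N_0) \cdot \cE_{k-l}(N_0)$ for suitably divisible~$N_0$ is essentially the entire content of~\cite{raum-xia-2020}; invoking it without proof makes your argument a reformulation rather than an independent proof, which is exactly what the paper itself does by citing the reference.
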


\subsection*{Acknowledgment}

The authors thank the referee for their remarks that helped to better highlight some aspects of this work.

\section{Main algorithm}
\label{sec:main_algo}

In this section we combine Theorem~\ref{thm:raum_xia} with the surjections onto~$\mathcal{E}_k(N)$ in~\eqref{eq:prop:abstract_eis}, the Fourier expansion of modular forms, and their Sturm bounds to outline our main algorithm.

The main point in employing~\eqref{eq:prop:abstract_eis} is to avoid the computation with functions in favor of symbolic calculations. In particular, we can determine invariant spaces of products of Eisenstein series from invariant spaces of tensor products of induced types. Theorem~\ref{thm:raum_xia} then allows us to determine~$\mathrm{M}_k(\rho)$ from this. We make this connection and intermediate step clear in Theorem~\ref{thm:main_algorithm_preparation}.

We represent modular forms and hence Eisenstein series in terms of their Fourier expansions. This allows us to profit from available, highly optimized implementations of products of power series. It does however require us to set a precision. We use the Sturm bounds for~$\mathrm{M}_k(\rho)$ for this. The details are described in Theorem~\ref{thm:main_algorithm} and Algorithm~\ref{alg:main_algorithm}.

The statement of Theorem~\ref{thm:main_algorithm_preparation} requires some preparation. We have a linear map~$\Phi_\mathcal{E}$ that arises from~\eqref{eq:prop:abstract_eis} by applying it componentwise. More precisely, it is defined by
\begin{gather}
\label{eq:def:thm:main_algorithm_preparation:phiE}
\begin{aligned}
  \Phi_\cE :\,
  \rmH^0 \big( \rho_N^\vee \otimes \rho \big)
  \oplus
  \rmH^0\big( \rho_{N_0}^\vee \otimes \rho_{N_0}^\vee \otimes \rho \big)
&\longrightarrow
  \rmH^0 \big( \cE_k(N) \otimes \rho \big)
  \oplus
  \rmH^0\big( \cE_l(N_0) \otimes \cE_{k-l}(N_0) \otimes\rho \big)
\text{,}\\
  \big(
  \mathfrak{e}_{\ga_1}^\vee \otimes v,\;
  \mathfrak{e}_{\ga_2}^\vee \otimes \mathfrak{e}_{\ga_3}^\vee
    \otimes w
  \big)
&\longmapsto
  \big(
  G_{k,N,c_1,d_1} \otimes v,\;
  G_{l,N_0,c_2,d_2} \otimes G_{k-l,N_0,c_3,d_3} \otimes w
  \big)
\tx{,}
\end{aligned}
\end{gather}
where $\ga_i^{-1}=\begin{psmatrix}a_i&b_i\\c_i&d_i\end{psmatrix}$ for $1\leq i\leq 3$.

Next, the product of modular forms yields a map
\begin{gather}
\label{eq:main_algorithm_preparation:phitimes_product_map}
  \cE_l(N_0) \otimes \cE_{k-l}(N_0)
\lra
  \cE_l(N_0) \cdot \cE_{k-l}(N_0)
\;\subseteq\;
  \rmM_k(\Ga(N_0))
\tx{,}\;
  f_1 \otimes f_2
\lmto
  f_1 \cdot f_2
\tx{.}
\end{gather}
Since it is a homomorphism of~$\SL{2}(\ZZ)$-representations, we obtain a corresponding linear map~$\Phi_\times$ defined by
\begin{gather}
\label{eq:def:thm:main_algorithm_preparation:phitimes}
\begin{aligned}
  \Phi_\times :\,
  \rmH^0\big( \cE_k(N) \otimes \rho \big)
  \oplus
  \rmH^0\big( \cE_l(N_0) \otimes \cE_{k-l}(N_0) \otimes \rho \big)
&\longrightarrow
  \rmH^0\big( \cE_k(N)\otimes\rho \big)
  \oplus
  \rmH^0\big( (\cE_l(N_0) \cdot \cE_{k-l}(N_0)) \otimes \rho \big)
\tx{,}\\
  \big(
  f_1 \otimes v,\;
  f_2 \otimes f_3 \otimes w
  \big)
&\lmto
  \big(
  f_1 \otimes v,\;
  f_2 \cdot f_3 \otimes w
  \big)
\tx{.}
\end{aligned}
\end{gather}

Lemma~\ref{la:invariants_are_modular_forms} allows us to map both direct summands of the right hand side of~\eqref{eq:def:thm:main_algorithm_preparation:phitimes} to~$\rmM_k(\rho)$. In particular, we obtain a map~$\Phi_\Sigma$ by adding up their images.
\begin{gather}
\label{eq:def:thm:main_algorithm_preparation:phisum}
\begin{aligned}
  \Phi_\Sigma :\,
  \rmH^0\big( \mathcal{E}_k(N)\otimes\rho \big)
  \oplus
  \rmH^0\big( (\mathcal{E}_l(N_0)\cdot\mathcal{E}_{k-l}(N_0)) \otimes \rho \big)
&\longrightarrow
  \rmM_k(\rho)
\tx{,}\\
  \big(
  f_1 \otimes v,\;
  f_2 \cdot f_3 \otimes w
  \big)
&\lmto
  f_1 \cdot v + f_2 \cdot f_3 \cdot w
\tx{.}
\end{aligned}
\end{gather}

\begin{theorem}%
\label{thm:main_algorithm_preparation}
Let~$k$, $l$ be integers with~$k \ge 2$ and~$1 \le l \le k - 1$, and fix a congruence type~$\rho$ of level~$N$, and a positive integer~$N_0$. Then the linear maps~\eqref{eq:def:thm:main_algorithm_preparation:phiE}, \eqref{eq:def:thm:main_algorithm_preparation:phitimes}, and~\eqref{eq:def:thm:main_algorithm_preparation:phisum} are surjective. In particular, if~$N_0$ is chosen as in Theorem~\ref{thm:raum_xia}, by composing additionally with~\eqref{eq:la:invariants_are_modular_forms}, we have a surjective map
\begin{gather}
\label{eq:thm:main_algorithm_preparation}
\begin{aligned}
  \rmH^0\big( \rho_N^\vee \otimes \rho \big)
  \,\oplus\,
  \rmH^0\big( \rho_{N_0}^\vee \otimes \rho_{N_0}^\vee \otimes \rho \big)
&
\xlongtwoheadrightarrow{\Phi_{\mathcal{E}}}
  \rmH^0\big( \cE_k(N) \otimes \rho \big)
  \,\oplus\,
  \rmH^0\big( \cE_l(N_0) \otimes \cE_{k-l}(N_0) \otimes \rho \big)
\\
&
\xlongtwoheadrightarrow{\Phi_\times}
  \rmH^0\big( \cE_k(N) \otimes \rho \big)
  \,\oplus\,
  \rmH^0\big( (\cE_l(N_0) \cdot \cE_{k-l}(N_0)) \otimes \rho \big)
\\
&
\xlongtwoheadrightarrow{\Phi_\Sigma}
  \rmH^0\big( \cE_k(N) \otimes \rho \big)
  \,+\,
  \rmH^0\big( (\cE_l(N_0) \cdot \cE_{k-l}(N_0)) \otimes \rho \big)
\lthra
  \rmM_k(\rho)
\tx{.}
\end{aligned}
\end{gather}
\end{theorem}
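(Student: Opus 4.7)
The plan is to verify each of the three surjectivities in turn by lifting surjectivity from the level of representations to the level of invariants, and then to invoke Theorem~\ref{thm:raum_xia} to conclude that $\Phi_\Sigma$ lands on the whole of $\rmM_k(\rho)$.

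A preliminary observation makes the entire argument go through painlessly. Because $\rho$ has congruence kernel of level~$N$, and $\rho_N$, $\rho_{N_0}$, $\cE_k(N)$, $\cE_l(N_0)$, $\cE_{k-l}(N_0)$ all factor through $\SL{2}(\ZZ) \slash \Ga(L)$ for $L = \lcm(N, N_0)$, every representation appearing in~\eqref{eq:thm:main_algorithm_preparation} is in fact a representation of the \emph{finite} group $\SL{2}(\ZZ) \slash \Ga(L)$. Hence taking $G$\nbd invariants (where $G$ is this finite quotient) is exact over $\CC$ by Maschke's theorem, so it sends surjections of representations to surjections of $\CC$\nbd vector spaces. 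Combined with the exactness of $-\otimes\rho$ over the field $\CC$, we obtain the slogan: to prove that any one of $\Phi_\cE, \Phi_\times$ is surjective, it suffices to exhibit the underlying morphism of representations and check it is surjective.

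For $\Phi_\cE$, Proposition~\ref{prop:abstract_eis} provides surjections $\rho_N^\vee \thra \cE_k(N)$, $\rho_{N_0}^\vee \thra \cE_l(N_0)$, and $\rho_{N_0}^\vee \thra \cE_{k-l}(N_0)$ of $\SL{2}(\ZZ)$\nbd representations. Tensoring the first with $\rho$, and tensoring the latter two together and then with $\rho$, yields surjective morphisms of representations; taking $\rmH^0$ of each summand and forming the direct sum yields exactly~$\Phi_\cE$ per formula~\eqref{eq:def:thm:main_algorithm_preparation:phiE}. For $\Phi_\times$, the component acting on the first summand is the identity, which is trivially surjective. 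On the second summand, the multiplication map in~\eqref{eq:main_algorithm_preparation:phitimes_product_map} is surjective by the very definition of $\cE_l(N_0) \cdot \cE_{k-l}(N_0)$; the only thing to check is that it intertwines the $\SL{2}(\ZZ)$\nbd actions, which follows from the identity $(f_1 \cdot f_2) |_k \ga = (f_1 |_l \ga)(f_2 |_{k-l} \ga)$ for the slash action. Tensoring with $\id_\rho$ and again invoking the exactness of invariants yields the surjectivity of $\Phi_\times$.

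For $\Phi_\Sigma$, Lemma~\ref{la:invariants_are_modular_forms} applied to $W = \cE_k(N)$ and to $W = \cE_l(N_0) \cdot \cE_{k-l}(N_0)$ shows that both direct summands land inside $\rmM_k(\rho)$, so the map is well-defined. Surjectivity onto the sum in the third line of~\eqref{eq:thm:main_algorithm_preparation} is then tautological from the definitions. Finally, Theorem~\ref{thm:raum_xia}, applied with $N_0$ chosen as in its statement, identifies this sum with all of $\rmM_k(\rho)$, completing the chain of surjections. The main conceptual point to keep in mind, and essentially the only one requiring care, is the reduction to a finite quotient group in order to legitimately assert exactness of the invariants functor; the rest is bookkeeping.
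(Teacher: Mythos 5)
Your proposal is correct and follows essentially the same route as the paper's proof: reduce to a finite quotient group to justify exactness of the invariants functor (the paper phrases this via semi-simplicity rather than Maschke's theorem explicitly), use Proposition~\ref{prop:abstract_eis} for $\Phi_\cE$, observe that $\Phi_\times$ and $\Phi_\Sigma$ are surjective by the definitions of their codomains, and invoke Theorem~\ref{thm:raum_xia} for the final step. The only cosmetic difference is that you spell out the intertwining identity $(f_1\cdot f_2)|_k\ga=(f_1|_l\ga)(f_2|_{k-l}\ga)$ for $\Phi_\times$, which the paper simply asserts.
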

\begin{proof}
We argue one by one that each map in the composition is surjective. Recall that given a homomorphism of finite dimensional and semi-simple representations~$\rho \ra \sigma$, the corresponding map on invariants~$\rmH^0(\rho) \ra \rmH^0(\sigma)$ is surjective, if~$\rho \ra \sigma$ is surjective. Since both~$\rho_M^\vee$ and~$\cE_k(M)$ for any positive integer~$M$ are finite dimensional representations of~$\SL{2}(\ZZ)$ that factor through the finite quotient group~$\SL{2}(\ZZ) \slash \Ga(M)$, they are semi-simple. We combine this with the homomorphism in~\eqref{eq:prop:abstract_eis} to conclude that
\begin{gather*}
  \rmH^0 \big( \rho_N^\vee \otimes \rho \big)
\lra
  \rmH^0 \big( \cE_k(N) \otimes \rho \big)
\quad\tx{and}\quad
  \rmH^0\big( \rho_{N_0}^\vee \otimes \rho_{N_0}^\vee \otimes \rho \big)
\lra
  \rmH^0\big( \cE_l(N_0) \otimes \cE_{k-l}(N_0) \otimes\rho \big)
\end{gather*}
are surjective, and therefore the map~$\Phi_\cE$ in~\eqref{eq:def:thm:main_algorithm_preparation:phiE} is surjective.

The product map~\eqref{eq:main_algorithm_preparation:phitimes_product_map} is surjective by definition of its codomain. Since the codomain is contained in the finite dimensional space~$\rmM_k(\Ga(N_0))$ and it factors as a representation through~$\SL{2}(\ZZ) \slash \Ga(N_0)$, it is semi-simple. We conclude that the map~$\Phi_\times$ in~\eqref{eq:def:thm:main_algorithm_preparation:phitimes} is surjective. In particular, the map~$\Phi_\Sigma$ in~\eqref{eq:def:thm:main_algorithm_preparation:phisum} is surjective by the definition of its codomain. The final map in~\eqref{eq:thm:main_algorithm_preparation} arises from~\eqref{eq:la:invariants_are_modular_forms} in Lemma~\ref{la:invariants_are_modular_forms}. It is surjective by Theorem~\ref{thm:raum_xia}.
\end{proof}

\subsection{Fourier expansions}
\label{ssec:main_algorithm:fourier_expansions}

To derive Algorithm~\ref{alg:main_algorithm} from Theorem~\ref{thm:main_algorithm_preparation}, we employ truncated Fourier expansions with coefficients in~$\QQab$. Given any ring~$R$ and rational number~$B \in \QQ$, we write
\begin{gather*}
  \rmFE(R)
\;=\;
  \bigcup_{N \in \ZZ_{>0}}
  R\big\llbrkt q^{\frac{1}{N}} \big\rrbrkt \big[q^{-1}\big]
\quad\tx{and}\quad
  \rmFE_B(R)
\;=\;
  \rmFE(R)
  \big\slash
  q^B
  \bigcup_{N \in \ZZ_{>0}}
  R \big\llbrkt q^{\frac{1}{N}} \big\rrbrkt
\end{gather*}
for the ring of Puiseux series with coefficients in~$R$ and its quotient by series of valuation at least~$B$. If~$\rho(T)$ is diagonalizable the Fourier expansion of modular forms yields a map
\begin{gather}
\label{eq:def:fourier_expansion_map}
  \rmfe :\,
  \rmM_k(\rho)
\lra
  \rmFE(\CC) \otimes V(\rho)
\tx{,}\quad
  f
\lmto
  \rmfe(f)
:=
  \sum_{n \in \QQ} c(f;\,n) q^n
\tx{.}
\end{gather}
If~$\rho$ has level~$N$, then exponents of~$q$ in the support of the image have denominator at most~$N$. For fixed~$k$ and~$\rho$, the Sturm bound for vector-valued modular forms (see Proposition~\ref{prop:sturm_bound}) yields some explicit nonnegative~$P \in \QQ$ such that the resulting ``truncated'' Fourier expansion map is injective:
\begin{gather}
\label{eq:def:truncated_fourier_expansion_map}
  \rmfe_P :\,
  \rmM_k(\rho)
\lra
  \rmFE_P(\CC) \otimes V(\rho)
\tx{,}\quad
  f
\lmto
  \rmfe_P(f)
:=
  \sum_{\substack{n \in \QQ\\n < P}} c(f;\,n) q^n
\tx{.}
\end{gather}

For given~$P \in \QQ$, we can apply~$\rmfe_P$ to each space in the middle and right column of~\eqref{eq:thm:main_algorithm_preparation}. Since Fourier expansions are ring homomorphisms, that is, they are compatible with multiplication and addition, the following diagram commutes, where the first bottom arrow is the multiplication of Puiseux series:
\begin{center}
\begin{tikzpicture}
\matrix(m)[matrix of math nodes,
row sep = 2em, column sep = 3em,
text height = 1.5em, text depth = 1.5ex]
{  \rmH^0\big( \cE_l(N_0) \otimes \cE_{k-l}(N_0) \otimes \rho \big)
&  \rmH^0\big( (\cE_l(N_0) \cdot \cE_{k-l}(N_0)) \otimes \rho \big) 
&  \rmM_k(\rho)
\\ \rmFE_P(\CC) \otimes \rmFE_P(\CC) \otimes V(\rho)
&  \rmFE_P(\CC) \otimes V(\rho)
&  \rmFE_P(\CC) \otimes V(\rho)
\\};

\path[-stealth]
(m-1-1) edge (m-1-2)
(m-1-2) edge (m-1-3)
(m-2-1) edge (m-2-2)
(m-2-2) edge (m-2-3)
(m-1-1) edge node[right] {$\rmfe_P \otimes \rmfe_P \otimes \id$}(m-2-1)
(m-1-2) edge node[right] {$\rmfe_P \otimes \id$}(m-2-2)
(m-1-3) edge node[right] {$\rmfe_P$} (m-2-3);
\end{tikzpicture}
\end{center}
A similar diagram holds for the other terms that occur in~\eqref{eq:thm:main_algorithm_preparation}.

For simplicity and by slight abuse of notation, we write~$\rmfe_P \circ \Phi_\cE$ for the Fourier expansion map $( \rmfe_P \otimes \id_\rho \,\oplus\, \rmfe_P \otimes \rmfe_P \otimes \id_\rho ) \circ \Phi_\cE$. Further, we write~$\Phi_\times$ and~$\Phi_\Sigma$ for the multiplication and addition maps on spaces of Puiseux series that correspond to the maps in~\eqref{eq:thm:main_algorithm_preparation}. Then we have the equality
\begin{gather}
\label{eq:thm:main_algorithm_preparation:intertwine_fourier_expansion}
  \rmfe_P \circ \Phi_\Sigma \circ \Phi_\times \circ \Phi_\cE
\;=\;
  \Phi_\Sigma \circ \Phi_\times \circ \rmfe_P \circ \Phi_\cE
\tx{.}
\end{gather}
In other words, we can intertwine the Fourier expansion with the maps in Theorem~\ref{thm:main_algorithm_preparation}.

\subsection{Algebraic Fourier coefficients}
\label{ssec:main_algorithm:algebraic_fourier_coefficients}

Assume that we have a~$\QQab$-structure~$V(\rho, \QQab) \subset V(\rho)$, that is, $V(\rho) = V(\rho, \QQab) \otimes_{\QQab} \CC$ and~$V(\rho, \QQab)$ is stable under the action of~$\rho$. Then we define
\begin{gather*}
  \rmM_k\big(\rho, \QQab\big)
:=
  \big\{
  f \in \rmM_k(\rho) \,:\,
  \forall n \in \QQ \,.\, c(f;\,n) \in V\big( \rho,\QQab \big)
  \big\}
\quad\tx{and}\quad
  \cE_k(N, \QQab)
=
  \cE_k(N) \cap \rmM_k(\Ga(N), \QQab)
\tx{,}
\end{gather*}
where we suppress the dependence on~$V(\rho,\QQab)$ from our notation. If~$\rho$ is a congruence type, then by for example Deligne-Rapoport~\cite{deligne-rapoport-1973} we have
\begin{gather}
  \rmM_k(\rho)
=
  \rmM_k(\rho, \QQab) \otimes_{\QQab} \CC
\tx{.}
\end{gather}
The representations~$\rho_N^\vee$ have~$\QQab$\nbd structures via their identification with permutation representations, which are compatible with the map~\eqref{eq:prop:abstract_eis} to~$\cE_k(N)$ and the respective~$\QQab$-structures of their images.

We also have~$\QQab$\nbd structures of the domain of~\eqref{eq:thm:main_algorithm_preparation} in Theorem~\ref{thm:main_algorithm_preparation}. In particular, we obtain
\begin{gather}
\label{eq:thm:main_algorithm_preparation:lhr_rational_structure}
  \rmH^0\big( \rho_N^\vee \otimes \rho \big)
=
  \rmH^0\big( \rho_N^\vee \otimes \rho,\, \QQab \big)
  \otimes_{\QQab} \CC
\quad\tx{and}\quad
  \rmH^0\big( \rho_{N_0}^\vee \otimes \rho_{N_0}^\vee \otimes \rho \big)
=
  \rmH^0\big( \rho_{N_0}^\vee \otimes \rho_{N_0}^\vee \otimes \rho,\, \QQab \big)
  \otimes_{\QQab} \CC
\tx{.}
\end{gather}
We use similar notation for the other spaces in~\eqref{eq:thm:main_algorithm_preparation}.
By definition, the~$\QQab$-structures of modular forms are compatible with products and sums. In particular, the maps~$\Phi_\times$ and~$\Phi_\Sigma$ both descend to maps of the~$\QQab$-structures. Thus Theorem~\ref{thm:main_algorithm_preparation} yields a surjective map
\begin{gather}
\label{eq:thm:main_algorithm_preparation:rational_structure}
  \Phi_\Sigma \circ \Phi_\times \circ \Phi_\cE :\,
  \rmH^0\big( \rho_N^\vee \otimes \rho,\, \QQab \big)
  \,\oplus\,
  \rmH^0\big( \rho_{N_0}^\vee \otimes \rho_{N_0}^\vee \otimes \rho,\, \QQab \big)
\lra
  \rmM_k\big( \rho, \QQab \big)
\tx{.}
\end{gather}

\subsection{Statement of the algorithm}

Combining Theorem~\ref{thm:main_algorithm_preparation} with the discussion in Sections~\ref{ssec:main_algorithm:fourier_expansions} and~\ref{ssec:main_algorithm:algebraic_fourier_coefficients}, we prove correctness of our main algorithm.

\begin{theorem}
\label{thm:main_algorithm}
Fix a weight~$k \ge 2$ and a congruence type~$\rho$. Assume that~$P \in \QQ$ is at least the Sturm bound for~$k$ and~$\rho$. Then Algorithm~\ref{alg:main_algorithm} computes a basis for the image of\/~$\rmM_k(\rho) \ra \rmFE_P(\CC) \otimes V(\rho)$ that is contained in~$\rmFE_P(\QQab) \otimes V(\rho, \QQab)$.
\end{theorem}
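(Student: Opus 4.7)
The plan is to chain together Theorem~\ref{thm:main_algorithm_preparation}, the intertwining relation~\eqref{eq:thm:main_algorithm_preparation:intertwine_fourier_expansion}, the descent to $\QQab$-structures in~\eqref{eq:thm:main_algorithm_preparation:rational_structure}, and the injectivity of $\rmfe_P$ implied by the Sturm bound (Proposition~\ref{prop:sturm_bound}). First I would fix a positive integer $N_0$ divisible by $N$ as provided by Theorem~\ref{thm:raum_xia}, so that the entire chain of surjections in~\eqref{eq:thm:main_algorithm_preparation} is available.

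Next I would trace through what Algorithm~\ref{alg:main_algorithm} does and match each step against the theory. I expect it to: (i) compute $\QQab$-bases of $\rmH^0(\rho_N^\vee \otimes \rho, \QQab)$ and $\rmH^0(\rho_{N_0}^\vee \otimes \rho_{N_0}^\vee \otimes \rho, \QQab)$ by solving the linear system encoding invariance under (images of) generators of~$\SL{2}(\ZZ)$; (ii) apply~$\Phi_\cE$ to each basis vector by substituting the known truncated Fourier expansions of the lattice Eisenstein series $G_{k,N,c,d}$ and $G_{l,N_0,c,d}$, $G_{k-l,N_0,c,d}$ from Section~\ref{ssec:prelim:eisenstein} at precision $P$, with coefficients in $\QQab$; (iii) realize~$\Phi_\times$ by multiplying truncated Puiseux series; (iv) realize~$\Phi_\Sigma$ by summing the two contributions in $\rmFE_P(\QQab) \otimes V(\rho, \QQab)$; and (v) extract a $\QQab$-basis of the resulting span by Gaussian elimination.

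For correctness I would argue as follows. By~\eqref{eq:thm:main_algorithm_preparation:rational_structure}, the $\QQab$-version of the chain in Theorem~\ref{thm:main_algorithm_preparation} surjects onto $\rmM_k(\rho, \QQab)$. By the intertwining relation~\eqref{eq:thm:main_algorithm_preparation:intertwine_fourier_expansion}, postcomposing the three maps $\Phi_\cE$, $\Phi_\times$, $\Phi_\Sigma$ with $\rmfe_P$ agrees with interpreting $\Phi_\times$ and $\Phi_\Sigma$ as the ambient multiplication and addition on Puiseux series, which is exactly what steps~(iii) and~(iv) perform. Hence the span produced in step~(v) equals $\rmfe_P\bigl(\rmM_k(\rho, \QQab)\bigr)$. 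Since $P$ is at least the Sturm bound for~$(k,\rho)$, the map $\rmfe_P$ is injective on $\rmM_k(\rho)$, and extending scalars to $\CC$ identifies this span with the image of $\rmM_k(\rho) \ra \rmFE_P(\CC) \otimes V(\rho)$; consequently the $\QQab$-basis found in step~(v) is a basis of that image. Containment in $\rmFE_P(\QQab) \otimes V(\rho, \QQab)$ is automatic because every operation is carried out over~$\QQab$.

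The main obstacle from a correctness standpoint is step~(i): verifying that the invariant subspaces $\rmH^0(\rho_N^\vee \otimes \rho, \QQab)$ and $\rmH^0(\rho_{N_0}^\vee \otimes \rho_{N_0}^\vee \otimes \rho, \QQab)$ are genuinely cut out by the finite system of linear equations one imposes. This reduces to the observation that, since both~$\rho$ and the~$\rho_M^\vee$ are congruence types, their tensor product factors through the finite quotient $\SL{2}(\ZZ)\slash \Ga(M)$ for a suitable~$M$, so invariance under a finite set of generators suffices. Practically, this is where the bulk of the work lies, and its cost $\cO((N^4 \dim(\rho))^\kappa)$ motivates the specialised techniques of Section~\ref{ssec:computation_invariants}; but from the viewpoint of Theorem~\ref{thm:main_algorithm} it is routine linear algebra over~$\QQab$.
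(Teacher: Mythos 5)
Your proposal assembles exactly the same ingredients as the paper's proof: the surjective composite~\eqref{eq:thm:main_algorithm_preparation} from Theorem~\ref{thm:main_algorithm_preparation}, the descent to $\QQab$-structures in~\eqref{eq:thm:main_algorithm_preparation:rational_structure}, the intertwining relation~\eqref{eq:thm:main_algorithm_preparation:intertwine_fourier_expansion}, and the injectivity of $\rmfe_P$ on $\rmM_k(\rho)$ coming from the Sturm bound. However, your step~(v) silently replaces the actual algorithm by a simpler one: ``compute $\rmfe_P\,\Phi_\Sigma\Phi_\times\Phi_\cE$ of \emph{all} of $\cB$, then row-reduce.'' Algorithm~\ref{alg:main_algorithm} does not do that. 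It partitions $\cB$ into chunks $\cB_1,\ldots,\cB_I$, row-reduces $M$ after each chunk, and \emph{returns early} on line~\ref{alg:row:main_algorithm:dimension_comparison} as soon as $\rank\,M = \dim\,\rmM_k(\rho)$, typically long before the full basis has been consumed (cf.\ Remark~\ref{rm:time_complexity_invariants}). Your correctness argument establishes the conclusion only in the degenerate case in which the loop runs to completion.

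To close the gap you need the two-sided rank argument that the paper makes explicit. On one side, an easy induction on the loop shows that after line~\ref{alg:row:main_algorithm:append_row} the row span of $M$ equals $\lambda\,\rmfe_P\,\Phi_\Sigma\Phi_\times\Phi_\cE$ applied to the basis elements processed so far, where $\lambda$ flattens $\rmFE_P(\QQab)\otimes V(\rho)$ into a coordinate vector; by Lemma~\ref{la:invariants_are_modular_forms} together with~\eqref{eq:thm:main_algorithm_preparation:rational_structure} each such vector is $\rmfe_P$ of an element of $\rmM_k(\rho,\QQab)$, and $\rmfe_P$ is injective there by Proposition~\ref{prop:sturm_bound}, so $\rank\,M \le \dim\,\rmM_k(\rho)$ holds at every stage. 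On the other side, surjectivity of~\eqref{eq:thm:main_algorithm_preparation:rational_structure} guarantees that after all of $\cB$ has been processed the row span is the whole of $\rmfe_P(\rmM_k(\rho,\QQab))$, so the test on line~\ref{alg:row:main_algorithm:dimension_comparison} is eventually satisfied. When it fires, the inequality $\rank\,M \le \dim\,\rmM_k(\rho)$ combined with $\rank\,M = \dim\,\rmM_k(\rho)$ forces the row span to be all of $\rmfe_P(\rmM_k(\rho,\QQab))$, so the rows output are indeed a basis. Without this containment argument one cannot justify that an early return produces a spanning set rather than a proper subspace.
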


\begin{algorithm}[H]
\label{alg:main_algorithm}
let~$N$ the level of~$\rho$,
$N_0$ as in Theorem~\ref{thm:raum_xia}, and
$P \leftarrow \lceil P N \rceil \slash N$\;
let~$v_i$, $1 \le i \le \dim(\rho)$ be a basis of~$V(\rho,\QQab)$\;
let $M$ be the matrix of size~$0 \times P N \dim(\rho)$ over~$\QQab$\;
\label{alg:row:main_algorithm:compute_invariants}
let $\cB = \bigcup_{i=1}^I \cB_i$ be a disjoint decomposition of a basis of
  $\rmH^0(\rho_N^\vee \otimes \rho,\, \QQab)
  \oplus
  \rmH^0(\rho_{N_0}^\vee \otimes \rho_{N_0}^\vee \otimes \rho,\, \QQab)$\;
\For{$1 \le i \le I$}{
  \For{$b \in \cB_i$}{
    let~$f = \sum_i f_i v_i \leftarrow \Phi_\Sigma\, \Phi_\times\, \rmfe_P\, \Phi_\cE(b)$\;
    \label{alg:row:main_algorithm:append_row}
    append to~$M$ the row with entries~$r_{i,n} \leftarrow c(f_i;\, n \slash P)$, $0 \le n < N P$\, $1 \le i \le \dim(\rho)$;
  }
  \label{alg:row:main_algorithm:row_echelon_reduction}
  replace~$M$ by its reduced row echelon form\;
  \If{$\rank\,M = \dim\,\rmM_k(\rho)$
    \label{alg:row:main_algorithm:dimension_comparison}
    }{
    for each row~$r$ of~$M$,
    output the truncated Fourier expansion~$f$ with coefficient~$c(f_i;\,n) = r_{i,n}$\;
    \Return;
  }
}
\caption{Computing a basis for~$\rmM_k(\rho)$}
\end{algorithm}

\begin{remark}
\label{rm:main_algorithm:symbolic_expressions}
By keeping track of the formal linear combinations of elements~$b \in \cB$ that equal the rows of~$M$ in each stage of Algorithm~\ref{alg:main_algorithm}, we can also compute an expression for each element of the basis of~$\rmM_k(\rho)$ in terms of products of Eisenstein series.
\end{remark}

\begin{proof}%
[Proof of Theorem~\ref{thm:main_algorithm}]
To see that Algorithm~\ref{alg:main_algorithm} terminates, it suffices to note that~$\cB$ is finite. We have to show that its output is correct.

We can and will assume that~$P \in \frac{1}{N} \ZZ$. We let~$\lambda$ be the linear map from~$\rmFE_P(\QQab) \otimes V(\rho)$ to~$\QQ^{\mathrm{ab}\,PN}$. We impose an ordering~$b_1, \ldots, b_{J_i}$ on~$\cB_i$. By induction on~$1 \le i \le I$ and~$1 \le j \le J_i$, one shows that after line~\ref{alg:row:main_algorithm:append_row} of Algorithm~\ref{alg:main_algorithm} the row span of~$M$ equals the span of
\begin{gather*}
  \lambda\, \Phi_\Sigma\, \Phi_\times\, \rmfe_P\, \Phi_\cE
  \big( \cB_1 \cup \cdots \cup \cB_{i-1} \cup \{b_1, \ldots, b_j\} \big)
\tx{.}
\end{gather*}

By the relation in~\eqref{eq:thm:main_algorithm_preparation:intertwine_fourier_expansion}, we can intertwine~$\rmfe_P$ with~$\Phi_\Sigma\, \Phi_\times$ in the previous expression. Lemma~\ref{la:invariants_are_modular_forms} implies that~$\Phi_\Sigma\, \Phi_\times\,\Phi_\cE(b)$ lies in~$\rmM_k(\rho)$ for every~$b \in \cB$. The compatibility with~$\QQab$-structures in~\eqref{eq:thm:main_algorithm_preparation:rational_structure} implies that it even lies in~$\rmM_k(\rho, \QQab)$. Since~$\rho$ has level~$N$, the Fourier coefficients of any element of~$\rmM_k(\rho)$ of index~$n \in \QQ$ vanish if~$n \not\in \frac{1}{N}\ZZ$. They also vanish if~$n$ is negative. Therefore, $\lambda\, \rmfe_P$ is an isomorphism from~$\rmM_k(\rho, \QQab)$ onto its image. We conclude that after line~\ref{alg:row:main_algorithm:append_row} of Algorithm~\ref{alg:main_algorithm} the row span of~$M$ is isomorphic to the span of
\begin{gather*}
  \Phi_\Sigma\, \Phi_\times\, \Phi_\cE
  \big( \cB_1 \cup \cdots \cup \cB_{i-1} \cup \{b_1, \ldots, b_j\} \big)
\in
  \rmM_k(\rho)
\tx{.}
\end{gather*}

From what we have shown, we have~$\rank\,M \le \dim\,\rmM_k(\rho)$ at every stage of Algorithm~\ref{alg:main_algorithm}. By~\eqref{eq:thm:main_algorithm_preparation:lhr_rational_structure}, the basis~$\cB$ is a basis for the domain of~\eqref{eq:thm:main_algorithm_preparation}. Now Theorem~\ref{alg:main_algorithm} implies that
\begin{gather*}
  \rmM_k(\rho)
=
  \linspan \CC \big\{
  \Phi_\Sigma\, \Phi_\times\, \Phi_\cE(b) \,:\,
  b \in \cB_i, 1 \le i \le I
  \big\}
\tx{.}
\end{gather*}
In particular, after processing all~$b \in \cB$ in, we have~$\rank\,M = \dim\,\rmM_k(\rho,\QQab)$. If this equality holds in line~\ref{alg:row:main_algorithm:dimension_comparison}, then the row span of~$M$ equals~$\rmM_k(\rho, \QQab)$, and therefore its rows yield a basis as desired.
\end{proof}

\section{Decompositions}
\label{sec:decompositions}

Many of the implementation details for Algorithm~\ref{alg:main_algorithm} depend on suitable decompositions of various arithmetic types and representations that appear. In this section, we discuss the isotypic decompositions that are relevant to the left hand side of~\eqref{eq:thm:main_algorithm_preparation} in Section~\ref{ssec:isotypic_decomposition} and a decomposition into induced representations that holds for specific arithmetic types in Section~\ref{ssec:twisted_permutation_types}. In Section~\ref{ssec:translation_orbits_fourier_expansions}, we use the results from the latter one to analyze the support of the Fourier expansion of vector-valued modular forms.

The results obtained in this section are essential ingredients for our implementation of Algorithm~\ref{alg:main_algorithm}, which we discuss in Section~\ref{sec:implementation}.

\subsection{Isotypic decomposition}
\label{ssec:isotypic_decomposition}

In this section, we describe how to decompose the invariant spaces $\rmH^0(\rho_N^\vee\otimes\rho)$ and~$\rmH^0(\rho_{N_0}^\vee\otimes\rho_{N_0}^\vee\otimes\rho)$ by using standard tools from representation theory. Specifically, we employ Frobenius reciprocity, Mackey's double coset decomposition, and the isotypic decomposition of intermediate arithmetic types for~$\Ga_1(N)$ and~$\Ga_0(N)$. This allows us to characterize~$\rmH^0(\rho_N^\vee\otimes\rho)$ as the~$\bbone$-isotypic component of~$\Res_{\Ga_1(N)}(\rho)$, and~$\rmH^0(\rho_{N_0}^\vee\otimes\rho_{N_0}^\vee\otimes\rho)$ as a direct sum of tensor products, indexed by double cosets in~$\Ga_1(N_0) \backslash \SL{2}(\ZZ) \slash \Ga_1(N_0)$, which we detail in~\eqref{prop:invariants_via_Ga1_restriction:triple_product}.

Recall first that every semi-simple representation can be decomposed into a direct sum of its so-called isotypic components. In particular, for a congruence type $\rho$ of level $N$ for $\Ga_1(N)$, we obtain the following decomposition:
\begin{gather}
\label{eq:Gamma1_isotypic}
  \rho
\cong
  \bigoplus_{n \pmod{N}}
  \rho\isotypGaN{n}
\tx{,}
\end{gather}
since $\Ga_1(N) \slash \Ga(N) = T^\ZZ \Ga(N)$ (see for example~p.~13 of~\cite{diamond-shurman-2005} or p.~8 of~\cite{stein-2007}). Here, the isotypic component~$\rho[e(n \slash N)] \subseteq \rho$ equals the direct sum of all irreducible subrepresentations of~$\rho$ isomorphic to the one-dimensional representation~$\ga \mto e(n b \slash N) \in \GL{1}(\CC)$, $\ga = \begin{psmatrix} a & b \\ c & d \end{psmatrix}$. In other words, we have
\begin{gather*}
  V\big( \rho\isotypGaN{n} \big)
=
  \Hom\big(
  \begin{psmatrix} a & b \\ c & d \end{psmatrix}
  \mto
  e\big( \mfrac{n b}{N} \big),\,
  \rho
  \big) (\CC)
\subseteq
  V(\rho)
\tx{.}
\end{gather*}
We refer to the decomposition~\eqref{eq:Gamma1_isotypic} as the $\Ga_1(N)$-isotypic decomposition of $\rho$.

The decomposition of the invariant spaces~$\rmH^0(\rho_N^\vee\otimes\rho)$ and~$\rmH^0(\rho_{N_0}^\vee\otimes\rho_{N_0}^\vee\otimes\rho)$ is given in the next proposition. In its statement we use the abbreviation
\begin{gather}
\label{eq:def:pi_g_permutation}
  \pi_g
=
  \Ind_{\Ga_1(N_0) \cap g^{-1}\Ga_1(N_0)g}^{\Ga_1(N_0)}\, \bbone
\tx{,}\quad
  g \in \Ga_1(N_0) \backslash \SL{2}(\ZZ) \slash \Ga_1(N_0)
\tx{.}
\end{gather}
Note that this is a permutation representation, that is, we can and will identify its image with a subgroup of permutations of~$\Ga_1(N_0) \slash (\Ga_1(N_0)\cap g^{-1}\Ga_1(N_0)g )$.

\begin{proposition}
\label{prop:invariants_via_Ga1_restriction}
Let~$\rho$ be a congruence type of level~$N$, and~$N_0$ a positive integer as in Theorem~\ref{thm:raum_xia}. Then we have
\begin{gather}
\label{prop:invariants_via_Ga1_restriction:double_product}
  \rmH^0\big( \rho_N^\vee \otimes \rho \big)
\;\cong\;
  \big(\Res_{\Ga_1(N)}\,\rho \big)[\bbone]
\tx{,}
\end{gather}
and
\begin{align}
\label{prop:invariants_via_Ga1_restriction:triple_product}
  \rmH^0\big( \rho_{N_0}^\vee \otimes \rho_{N_0}^\vee \otimes \rho \big)
\;\cong\;
  \bigoplus_{\substack{
     g \in \Ga_1(N_0) \backslash \SL{2}(\ZZ) \slash \Ga_1(N_0)\\
     m_0 \pmod{N_0},\, m \pmod{N}\\
     m_0 \equiv -m N_0 \slash N \pmod{N_0}}}
  \pi_g\isotypGasub{m_0}{N_0}
  \,\otimes\,
  \Res_{\Ga_1(N_0)}\Big(
  \big( \Res_{\Ga_1(N)}\,\rho \big)\isotypGaN{m}
  \Big)
\tx{.}
\end{align}
\end{proposition}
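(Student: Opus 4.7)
For \eqref{prop:invariants_via_Ga1_restriction:double_product}, my plan is to identify $\rmH^0(\rho_N^\vee \otimes \rho)$ with $\Hom_{\SL{2}(\ZZ)}(\rho_N, \rho)$ via the standard tensor-hom adjunction, then apply Frobenius reciprocity~\eqref{eq:frobenius_reciprocity} to $\rho_N = \Ind_{\Ga_1(N)}^{\SL{2}(\ZZ)}\,\bbone$ from~\eqref{eq:def:rhoN_rhochi} to obtain $\Hom_{\Ga_1(N)}(\bbone, \Res_{\Ga_1(N)}\,\rho)$. This is the space of $\Ga_1(N)$-invariants of $\rho$, which equals the $\bbone$-isotypic component $(\Res_{\Ga_1(N)}\,\rho)[\bbone]$ since $\Ga_1(N)\slash\Ga(N)$ is cyclic generated by $T$.

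For \eqref{prop:invariants_via_Ga1_restriction:triple_product}, I would proceed in three steps. First, the same tensor-hom identification and Frobenius reciprocity, applied to one of the copies of $\rho_{N_0} = \Ind_{\Ga_1(N_0)}^{\SL{2}(\ZZ)}\,\bbone$, rewrites the invariant space as $\rmH^0(\Res_{\Ga_1(N_0)}\,\rho_{N_0}^\vee \otimes \Res_{\Ga_1(N_0)}\,\rho)$. Second, Mackey's double coset theorem~\eqref{eq:mackey_double_coset} combined with the self-duality $\rho_{N_0}^\vee\cong\rho_{N_0}$ decomposes $\Res_{\Ga_1(N_0)}\,\rho_{N_0}^\vee\cong\bigoplus_g\pi_g$ over $g \in \Ga_1(N_0)\backslash\SL{2}(\ZZ)\slash\Ga_1(N_0)$, and the invariants split as $\bigoplus_g \rmH^0(\pi_g\otimes\Res_{\Ga_1(N_0)}\,\rho)$. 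Third, both factors factor through the cyclic quotient $\Ga_1(N_0)\slash\Ga(N_0)$ generated by~$T$, so the invariants of each tensor product decompose into a sum of dual-character pairings $\bigoplus_{m_0}\pi_g\isotypGasub{m_0}{N_0}\otimes(\Res_{\Ga_1(N_0)}\,\rho)\isotypGasub{-m_0}{N_0}$, which is the standard formula for invariants under a finite cyclic group.

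The main arithmetic obstacle, and the source of the condition $m_0\equiv -m N_0\slash N\pmod{N_0}$, is matching the $\Ga_1(N_0)$-character $e(-m_0\slash N_0)$ appearing in the third step against the $\Ga_1(N)$-character $e(m\slash N)$ indexing the isotypic decomposition of $\Res_{\Ga_1(N)}\,\rho$. Because $\rho$ has level~$N$, a $T$-eigenvalue on $\rho$ is necessarily of the form $e(m\slash N)$, and this equals $e(m_0'\slash N_0)$ precisely when $m_0'\equiv m N_0\slash N\pmod{N_0}$; setting $m_0' = -m_0$ yields the stated condition and identifies $(\Res_{\Ga_1(N_0)}\,\rho)\isotypGasub{-m_0}{N_0}$ with $\Res_{\Ga_1(N_0)}((\Res_{\Ga_1(N)}\,\rho)\isotypGaN{m})$, matching the proposition's right-hand side and completing the decomposition.
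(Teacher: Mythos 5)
Your proposal is correct and follows essentially the same path as the paper's proof: tensor--hom plus Frobenius reciprocity for~\eqref{prop:invariants_via_Ga1_restriction:double_product}, then Frobenius reciprocity, Mackey's double coset decomposition with the self-duality $\rho_{N_0}^\vee\cong\rho_{N_0}$, and an isotypic decomposition over the cyclic quotient for~\eqref{prop:invariants_via_Ga1_restriction:triple_product}. The only cosmetic difference is in the final step: the paper introduces both indices~$m_0\pmod{N_0}$ and~$m\pmod{N}$ upfront by restricting $\rho$ to $\Ga_1(N)$ before $\Ga_1(N_0)$ and reads off the trivial-character condition from the product character $e((m_0+mN_0/N)b/N_0)$, whereas you first pair $\Ga_1(N_0)$-isotypic components of $\pi_g$ and $\Res_{\Ga_1(N_0)}\rho$ directly and then identify the nonvanishing pieces of the latter with $\Ga_1(N)$-isotypic components of $\rho$ via the level-$N$ constraint on $T$-eigenvalues; this yields exactly the same congruence and decomposition.
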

\begin{proof}
We begin by showing~\eqref{prop:invariants_via_Ga1_restriction:double_product}. Note first that~$\rmH^0(\rho_N^\vee\otimes\rho)\cong\mathrm{Hom}(\rho_N,\rho)$. By Frobenius reciprocity in~\eqref{eq:frobenius_reciprocity}, we obtain $\mathrm{Hom}(\rho_N,\rho) \cong \rmH^0(\Res_{\Ga_1(N)}(\rho))$. From the definition of the~$\bbone$-isotypic component, we see that
\begin{gather*}
  \rmH^0\big( \Res_{\Ga_1(N)}\,\rho \big)
\cong
  \big( \Res_{\Ga_1(N)}\,\rho \big)[\bbone]
\tx{.}
\end{gather*}
This establishes~\eqref{prop:invariants_via_Ga1_restriction:double_product}.

As for $\rmH^0(\rho_{N_0}^\vee\otimes\rho_{N_0}^\vee\otimes\rho)$ in~\eqref{prop:invariants_via_Ga1_restriction:triple_product}, we first employ~\eqref{eq:frobenius_reciprocity} to obtain
\begin{gather}
\label{eq:iso_final:frob_applied}
  \rmH^0\big( \rho_{N_0}^\vee\otimes\rho_{N_0}^\vee\otimes\rho \big)
\cong
  \rmH^0\big( 
  \Res_{\Ga_1(N_0)}\, \rho_{N_0}^\vee
  \,\otimes\,
  \Res_{\Ga_1(N_0)}\, \rho
  \big)
\tx{.}
\end{gather}
In addition, Mackey's double coset decomposition in~\eqref{eq:mackey_double_coset} implies that
\begin{gather}
\label{eq:iso_final:mackey_applied}
  \Res_{\Ga_1(N_0)}\, \rho_{N_0}
\cong
  \bigoplus_{g \in \Ga_1(N_0) \backslash \SL{2}(\ZZ) \slash \Ga_1(N_0)}
  \Ind_{\Ga_1(N_0)\cap g^{-1}\Ga_1(N_0)g}^{\Ga_1(N_0)}\, \bbone
\tx{.}
\end{gather}
In the argument on the right hand side, we recognize the permutation representation~$\pi_g$ defined in~\eqref{eq:def:pi_g_permutation}. The self-duality~$\rho_{N_0}^\vee\cong\rho_{N_0}$ allows us to insert~\eqref{eq:iso_final:mackey_applied} into~\eqref{eq:iso_final:frob_applied}. We obtain
\begin{gather}
\label{eq:iso_final:mackey_inserted_into_frob_applied}
  \rmH^0\big(
  \Res_{\Ga_1(N_0)}\, \rho_{N_0}^\vee
  \,\otimes\,
  \Res_{\Ga_1(N_0)}\, \rho
  \big)
\;\cong\;
  \bigoplus_{g \in \Ga_1(N_0) \backslash \SL{2}(\ZZ) \slash \Ga_1(N_0)}
  \rmH^0\big(
  \pi_g \otimes \Res_{\Ga_1(N_0)}\,\rho
  \big)
\tx{.}
\end{gather}

In order to apply the isotypic decomposition for~$\Ga_1(N)$-representations, we rewrite the restriction to~$\Ga_1(N_0)$ on the right hand side of~\eqref{eq:iso_final:mackey_inserted_into_frob_applied} as a restriction in steps to~$\Ga_1(N)$ and then~$\Ga_1(N_0)$. The $\Ga_1(N_0)$-isotypic decomposition of~$\pi_g$ and the~$\Ga_1(N)$-isotypic decomposition of~$\rho$ as in~\eqref{eq:Gamma1_isotypic} yield
\begin{multline}
\label{eq:iso_final:tensor_isotypcial_components}
  \bigoplus_{g \in \Ga_1(N_0) \backslash \SL{2}(\ZZ) \slash \Ga_1(N_0)}
  \rmH^0\Big(
  \pi_g
  \,\otimes\,
  \Res_{\Ga_1(N_0)} \big( \Res_{\Ga_1(N)}\, \rho \big)
  \Big)
\\
\cong\;
  \bigoplus_{\substack{
    g \in \Ga_1(N_0) \backslash \SL{2}(\ZZ) \slash \Ga_1(N_0) \\
    m_0 \pmod{N_0},\, m \pmod{N}}}
  \rmH^0\Big(
  \pi_g\isotypGasub{m_0}{N_0}
  \,\otimes\,
  \Res_{\Ga_1(N_0)} \Big(
  \big( \Res_{\Ga_1(N)}\,\rho \big)\isotypGaN{m}
  \Big)
  \Big)
\tx{.}
\end{multline}

The tensor product in the argument on the right hand side of~\eqref{eq:iso_final:tensor_isotypcial_components} is isomorphic to
\begin{gather*}
  \begin{psmatrix} a & b \\ c & d \end{psmatrix}
\lmto
  e\Big(
  \mfrac{(m_0 + m N_0 \slash N)\, b}{N_0}
  \Big)
\in
  \GL{1}(\CC)
\tx{.}
\end{gather*}
In particular, to isolate the isotrivial component in~\eqref{eq:iso_final:tensor_isotypcial_components}, it suffices to impose the congruence condition $m_0 \equiv -m N_0 \slash N \,\pmod{N_0}$ in the direct sum. In conclusion, we obtain
\begin{align*}
  \rmH^0\big( \rho_{N_0}^\vee \otimes \rho_{N_0}^\vee \otimes \rho \big)
\;\cong\;
  \bigoplus_{\substack{
    g \in \Ga_1(N_0) \backslash \SL{2}(\ZZ) \slash \Ga_1(N_0) \\
    m_0 \pmod{N_0},\, m \pmod{N} \\
    m_0 \equiv -mN_0 \slash N \pmod{N_0}}}
  \pi_g\isotypGasub{m_0}{N_0}
  \,\otimes\,
  \Res_{\Ga_1(N_0)} \Big(
  \big( \Res_{\Ga_1(N)}\,\rho \big)\isotypGaN{m}
  \Big)
\tx{.}
\end{align*}
\end{proof}

To estimate the runtime of Algorithm~\ref{alg:main_algorithm} in Section~\ref{ssec:time_complexity}, we need to bound the dimension of the invariant spaces~$\rmH^0( \rho_N^\vee \otimes \rho)$ and~$\rmH^0(\rho_{N_0}^\vee \otimes \rho_{N_0}^\vee \otimes \rho)$. To this end, we need to know the structure of the permutation~$\pi_g(T)$ defined in~\eqref{eq:def:pi_g_permutation}. The following lemma provides this structure.

\begin{lemma}
\label{la:pi_g_groups}
Given a positive integer~$N$, we have that
\begin{gather}
\label{eq:la:pi_g_groups}
\begin{split}
  \Ga_1(N) \,\big\slash\, (\Ga_1(N) \cap g^{-1} \Ga_1(N) g)
\;\cong\;
  \big( \Ga_1(N) \slash \Ga(N) \big)
  \,\big\slash\,
  \big( (\Ga_1(N) \cap g^{-1} \Ga_1(N) g) \slash \Ga(N) \big)
\tx{,}
\\
  \Ga_1(N) \slash \Ga(N)
\;=\;
  T^\ZZ\, \Ga(N)
\tx{,}\qquad
  (\Ga_1(N) \cap g^{-1}\Ga_1(N)g) \slash \Ga(N)
\;=\;
  T^{n_g \ZZ}\, \Ga(N)
\tx{,}
\end{split}
\end{gather}
with
\begin{gather*}
  n_g
=
  \lcm\big( \mfrac{N}{\gcd(N,ac)},\, \mfrac{N}{\gcd(N,c^2)} \big)
\tx{,}\quad 
  g
=
  \begin{psmatrix} a & b\\ c & d \end{psmatrix}
\in
  \SL{2}(\ZZ)
\tx{.}
\end{gather*}
\end{lemma}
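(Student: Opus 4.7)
The plan is to verify the three assertions in \eqref{eq:la:pi_g_groups} in turn, relying on the normality of~$\Ga(N)$ in~$\SL{2}(\ZZ)$ and on a direct matrix calculation.

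First, since~$\Ga(N)$ is normal in~$\SL{2}(\ZZ)$, it is contained in both~$\Ga_1(N)$ and~$g^{-1}\Ga_1(N)g$, hence in their intersection, and is normal there as well. The first isomorphism in~\eqref{eq:la:pi_g_groups} then follows from a single application of the third isomorphism theorem. The identification~$\Ga_1(N) \slash \Ga(N) = T^\ZZ\,\Ga(N)$ is already recorded in the paragraph preceding~\eqref{eq:Gamma1_isotypic}, so it can be cited directly.

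For the substantive part, I would proceed as follows. Since~$\Ga_1(N) \slash \Ga(N)$ is generated by~$T$, every coset of~$\Ga(N)$ inside~$\Ga_1(N)$ has a representative of the form~$T^m$. It therefore suffices to determine those~$m \in \ZZ$ for which~$g T^m g^{-1} \in \Ga_1(N)$. A direct computation with~$g = \begin{psmatrix} a & b \\ c & d \end{psmatrix}$ and~$ad - bc = 1$ gives
\begin{gather*}
  g\, T^m\, g^{-1}
=
  \begin{psmatrix} 1 - acm & a^2 m \\ -c^2 m & 1 + acm \end{psmatrix}
\tx{.}
\end{gather*}
Membership in~$\Ga_1(N)$ is equivalent to the two congruences $acm \equiv 0 \pmod{N}$ and $c^2 m \equiv 0 \pmod{N}$; note that the off-diagonal upper entry~$a^2 m$ imposes no condition. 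The first congruence holds precisely when $m$ is divisible by $N \slash \gcd(N, ac)$, and the second precisely when $m$ is divisible by $N \slash \gcd(N, c^2)$, so both hold simultaneously exactly when $n_g \amid m$. This yields $(\Ga_1(N) \cap g^{-1}\Ga_1(N)g) \slash \Ga(N) = T^{n_g \ZZ}\, \Ga(N)$.

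There is no real obstacle beyond the matrix computation, which is entirely routine once one notices that the upper-right entry~$a^2 m$ plays no role in the congruence conditions. The only minor care point is to confirm that $T^m \in \Ga_1(N) \cap g^{-1}\Ga_1(N)g$ is equivalent to $g T^m g^{-1} \in \Ga_1(N)$ (together with the trivial $T^m \in \Ga_1(N)$), which is immediate from the definition.
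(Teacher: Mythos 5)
Your proof is correct and takes essentially the same approach as the paper: third isomorphism theorem for the first identity, the cited fact for the second, and a conjugation computation reducing to the congruences $acm \equiv 0$ and $c^2 m \equiv 0 \pmod{N}$ for the third. You make the matrix product $g T^m g^{-1}$ explicit, which the paper leaves as an implicit calculation, but the argument is the same.
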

\begin{proof}
The isomorphism is a direct application of Noether's third isomorphism theorem (see for example~\cite{dummit_foote-2004}). The equality~$\Ga_1(N) \slash \Ga(N) = T^\ZZ\, \Ga(N)$ was already explained. Hence, we only need prove the last equality.

To this end, let~$\delta \in \Ga_1(N) \cap g^{-1}\Ga_1(N)g$. Since~$\delta \in \Ga_1(N)$, we have that~$\delta = T^n \delta'$ for some integer~$n$ and some~$\delta' \in \Ga(N)$. We also have that~$g \delta g^{-1} = g T^n \delta' g^{-1} \in \Ga_1(N)$. Since~$\Ga(N) \subseteq \Ga_1(N)$ is normal, this is equivalent to~$g T^n g^{-1} \in \Ga_1(N)$. Inserting the entries of~$g$, we discover that this is further equivalent to the congruences~$a c n  \equiv 0 \,\pmod{N}$ and~$c^2 n \equiv 0 \,\pmod{N}$. This can be rephrased as
\begin{gather*}
  \lcm\big( \mfrac{N}{\gcd(N,ac)},\, \mfrac{N}{\gcd(N,c^2)} \big)
\isdiv
  n
\tx{.}
\end{gather*}
In other words, we find that~$\delta \in T^{n_g \ZZ} \Ga(N)$. We find that~$T^{n_g \ZZ} \Ga(N) \subseteq (\Ga_1(N) \cap g^{-1} \Ga_1(N) g) \slash \Ga(N)$, when reverting the previous calculation, and thus finish the proof.
\end{proof}

Lemma~\ref{la:pi_g_groups} directly implies that~$\pi_g(T)$ corresponds to a transitive, that is, cyclic, permutation of order~$n_g$. Hence $\pi_g(T)$ has distinct eigenvalues~$e(m \slash n_g)$ for~$m \,\pmod{n_g}$. The corresponding isotypic components~$\pi_g[e(m \slash n_g)] \subseteq \rho$ are therefore at most one-dimensional. Given~$v \in V(\pi_g)$, we can project to them, and obtain
\begin{gather}
\label{eq:def:pi_g_groups_eigenvector}
  v\isotypGasub{m}{n_g}
=
  \sum_{h \pmod{n_g}} e\big( \mfrac{-m h}{n_g} \big)\, \pi_g(T)^h v
\in
  V\big( \pi_g\isotypGasub{m}{n_g} \big)
\tx{.}
\end{gather}

We finish this section with a bound on the dimension of~$\rmH^0(\rho_{N_0}^\vee\otimes\rho_{N_0}^\vee\otimes\rho)$ that will important in Section~\ref{ssec:time_complexity}.

\begin{proposition}
\label{prop:triple_product_dim_bound}
We have
\begin{gather*}
  \dim\, \rmH^0\big(
  \rho_{N_0}^\vee \otimes \rho_{N_0}^\vee \otimes \rho
  \big)
\;\le\;
  \#\big( \Ga_1(N_0) \backslash \SL{2}(\ZZ) \slash \Ga_1(N_0) \big)\,
  \dim(\rho)
\;\ll\;
  N_0^{1+\epsilon}\, \dim(\rho)
\tx{,}
\end{gather*}
where the implied contant is independent of\/~$N_0$ and~$\rho$.
\end{proposition}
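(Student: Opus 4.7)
The plan is to apply the decomposition in Proposition~\ref{prop:invariants_via_Ga1_restriction}~\eqref{prop:invariants_via_Ga1_restriction:triple_product}, bound the dimension of each summand, and then count the double cosets $\Ga_1(N_0) \backslash \SL{2}(\ZZ) \slash \Ga_1(N_0)$.

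First, I would exploit two facts about the summands. By Lemma~\ref{la:pi_g_groups} and the discussion surrounding~\eqref{eq:def:pi_g_groups_eigenvector}, $\pi_g(T)$ is a transitive cyclic permutation of order $n_g$ with distinct eigenvalues, so $\dim \pi_g\isotypGasub{m_0}{N_0} \le 1$ for every choice of $m_0$. Moreover, for fixed $g$ and $m$, the congruence $m_0 \equiv -m N_0 \slash N \pmod{N_0}$ determines $m_0$ uniquely, since $N \isdiv N_0$. Combining these observations with the $\Ga_1(N)$-isotypic decomposition~\eqref{eq:Gamma1_isotypic}, summing first over compatible pairs $(m_0, m)$ for each fixed $g$ yields at most
\begin{gather*}
\sum_{m \pmod{N}} \dim \big( \Res_{\Ga_1(N)}\,\rho \big)\isotypGaN{m} \;=\; \dim(\rho),
\end{gather*}
and then summing over $g$ establishes the first stated inequality.

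The main obstacle, and the source of the factor $N_0^{1+\epsilon}$, is the count of the double cosets. My plan is to parametrize the right cosets $\SL{2}(\ZZ) \slash \Ga_1(N_0)$ by primitive first columns $(a, c) \in (\ZZ \slash N_0 \ZZ)^2$ with $\gcd(a, c, N_0) = 1$; two elements of $\SL{2}(\ZZ)$ lie in the same right coset if and only if their first columns agree modulo $N_0$. Using $\Ga_1(N_0) \slash \Ga(N_0) = T^\ZZ \Ga(N_0)$ as in Lemma~\ref{la:pi_g_groups}, the left $\Ga_1(N_0)$-action reads as $(a, c) \mapsto (a + t c, c)$ for $t \in \ZZ \slash N_0 \ZZ$. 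Grouping by $d = \gcd(c, N_0)$, a direct counting argument gives $\phi(d)$ orbits of admissible $a$ for each of the $\phi(N_0 \slash d)$ values of $c$ with $\gcd(c, N_0) = d$, hence
\begin{gather*}
\#\big( \Ga_1(N_0) \backslash \SL{2}(\ZZ) \slash \Ga_1(N_0) \big) \;=\; \sum_{d \,|\, N_0} \phi(d)\, \phi(N_0 \slash d) \;=\; (\phi \ast \phi)(N_0).
\end{gather*}
I would finish by invoking the multiplicativity of $\phi \ast \phi$ together with the elementary estimate $(\phi \ast \phi)(p^k) \le (k+1)\, p^k$, which yields $(\phi \ast \phi)(N_0) \le \tau(N_0) \cdot N_0 \ll N_0^{1+\epsilon}$.
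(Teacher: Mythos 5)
Your argument is correct and follows essentially the same path as the paper: you use the decomposition in Proposition~\ref{prop:invariants_via_Ga1_restriction}~\eqref{prop:invariants_via_Ga1_restriction:triple_product}, observe via Lemma~\ref{la:pi_g_groups} that each $\pi_g\isotypGasub{m_0}{N_0}$ has dimension at most~$1$, note that the congruence condition pins down $m_0$ in terms of $m$, and sum the remaining isotypic pieces of $\Res_{\Ga_1(N)}\rho$ to recover $\dim(\rho)$ per double coset. The only variation lies in the asymptotic: where the paper bounds the double-coset count by the cusp count $\#\big( \Ga_1(N_0) \backslash \SL{2}(\ZZ) \slash \Ga_\infty \big) = \tfrac{1}{2}\sum_{d\mid N_0}\varphi(d)\varphi(N_0/d)$ (for $N_0>4$), you compute $\#\big( \Ga_1(N_0) \backslash \SL{2}(\ZZ) \slash \Ga_1(N_0) \big) = (\varphi\ast\varphi)(N_0)$ directly via primitive column vectors. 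Your count is correct (it differs from the cusp count by the expected factor, since $-I$ is not accounted for), and your derivation is arguably a bit more self-contained since it avoids invoking the cusp formula and the side argument that $T$-double cosets refine $\Ga_1(N_0)$-double cosets; both yield $\ll N_0^{1+\epsilon}$ in the same way. Incidentally, the bound $\varphi(d)\varphi(N_0/d)\le N_0$ already gives $(\varphi\ast\varphi)(N_0)\le \tau(N_0)N_0$ without appealing to multiplicativity, which slightly shortens your final step.
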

\begin{proof}
Since the isotypic components of~$\pi_g$ have dimension at most one, the decomposition~\eqref{prop:invariants_via_Ga1_restriction:triple_product} implies that
\begin{align*}
 \dim\, \rmH^0\big(
 \rho_{N_0}^\vee \otimes \rho_{N_0}^\vee \otimes \rho
 \big)
&\le
  \sum_{\substack{
    g \in \Ga_1(N_0) \backslash \SL{2}(\ZZ) \slash \Ga_1(N_0) \\
    m \pmod{N}}}
  \dim\,\Res_{\Ga_1(N_0)} \Big(
  \big( \Res_{\Ga_1(N)}\, \rho \big)\isotypGaN{m}
  \Big)
\\
&=
  \# \big( \Ga_1(N_0) \backslash \SL{2}(\ZZ) \slash \Ga_1(N_0) \big)\,
  \dim\, \Res_{\Ga_1(N_0)}\, \Res_{\Ga_1(N)}\, \rho
\\
&=
  \# \big( \Ga_1(N_0) \backslash \SL{2}(\ZZ) \slash \Ga_1(N_0) \big)\,
  \dim(\rho)
\tx{.}
\end{align*}
To obtain the asymptotic upper bound, we may assume that~$N_0 > 4$. Using the Euler~$\varphi$\nbd function, we find that
\begin{gather*}
  \# \big( \Ga_1(N_0) \backslash \SL{2}(\ZZ) \slash \Ga_\infty \big)
=
  \mfrac{1}{2}
  \sum_{d \isdiv N_0}
  \varphi(d) \varphi(N_0 \slash d)
\tx{.}
\end{gather*}
Since~$\Ga_1(N_0)$ is generated by~$T$ and~$\Ga(N_0)$, we conclude that
\begin{gather*}
  \# \big( \Ga_1(N_0) \backslash \SL{2}(\ZZ) \slash \Ga_1(N_0) \big)
\ll
  \# \big( \Ga_1(N_0) \backslash \SL{2}(\ZZ) \slash \Ga_\infty \big)
\ll
  N_0\,
  \sum_{d \isdiv N_0} 1
\ll
  N_0^{1+\epsilon}
\tx{.}
\end{gather*}
\end{proof}

\subsection{Twisted permutation types}
\label{ssec:twisted_permutation_types}

In this section, we introduce the notion of twisted permutation representations, which generalize the notion of permutation representations. Their purpose in our work is twofold. On one hand, they allow for a more efficient calculation of the invariants in the domain of~\eqref{eq:thm:main_algorithm_preparation}. On the other hand, we apply this concept to the restriction of arithmetic types to the group generated by~$T \in \SL{2}(\ZZ)$. This allows for a dramatic reduction in size of the matrix in Algorithm~\ref{alg:main_algorithm}. The flexibility that we need, requires us to state the next definition for discrete groups as opposed to the special case of~$\SL{2}(\ZZ)$.

We will need the wreath product with the symmetric group~$\rmS_n$ on~$n$ letters, which for a group~$H$ and a positive integer~$n$ we define as as follows
\begin{gather*}
  H \wr \rmS_n
:=
  \big\{ (\pi,\ga) \,:\,
  \pi \in \rmS_n,\,
  \ga :\, \{1,\ldots,n\} \ra H
  \big\}
\tx{,}\quad
  (\pi',\ga') (\pi,\ga) 
:=
  \big( \pi' \pi,\, (\ga' \circ \pi) \cdot \ga \big)
=
  \big( \pi' \pi,\, i \mto \ga'(\pi(i)) \ga(i) \big)
\tx{.}
\end{gather*}
Elements of the form~$(\id, \ga)$ form a subgroup of~$H \wr \rmS_n$ that yields the quotient~$\rmS_n$.

Given a complex representation~$\sigma$ of~$H$, the wreath product gives rise to a representation~$\sigma \wr \rmS_n$ on~$\CC^n \times V(\sigma)$ that we define as
\begin{gather}
\label{eq:def:wreath_representation}
  \sigma \wr \rmS_n :\,
  H \wr \rmS_n
\lra
  \GL{}(\CC^n \times V(\sigma))
\tx{,}\;
  (\pi, \ga)
\lmto
  \big(
  (e_i \otimes w) \mto 
   e_{\pi(i)} \otimes \sigma(\ga(i)) w
  \big)
\tx{.}
\end{gather}

\begin{definition}
\label{def:twisted_perm_representation}
Let $G$ be a discrete group, $n$ a positive integer, and $\sigma$ a complex representation of some group~$H$. We call a complex representation~$\rho$ of~$G$ a twisted permutation representation of order $n$ with twist representation~$\sigma$ if~$\rho$ factors through~$\sigma \wr \rmS_n$. In other words, we have a group homomorphism~$\rho^\wr :\, G \ra \GL{}(W) \wr \rmS_n$ and an isomorphism~$\varphi_\rho$ between~$V(\rho)$ and~$V(\sigma \wr \rmS_n) = \CC^n \otimes V(\sigma)$ such that
\begin{gather*}
  \rho = \varphi_\rho^\ast \circ (\std(W) \wr \rmS_n) \circ \rho^\wr
\tx{.}
\end{gather*}
\end{definition}

We call~$\dim(V(\sigma))$ in Definition~\ref{def:twisted_perm_representation} the twist dimension of~$\rho$. If both~$\rho$ and~$\sigma$ in Definition~\ref{def:twisted_perm_representation} are arithmetic types, we call~$\rho$ a twisted permutation type.

Note that neither~$\rho^\wr$ nor~$\phi_\rho$ in Definition~\ref{def:twisted_perm_representation} are unique in general. Throughout this work, we identify twisted permutation representations~$\rho$ with a triple~$(\rho,\rho^\wr,\varphi_\rho)$. That is, we make an implicit choice of~$\rho^\wr$ and~$\varphi_\rho$.

In the next proposition, we describe twisted permutation representation in terms of induced representations. It can be viewed as a representation theoretic formulation of the orbit-stabilizer theorem from group theory. We need some notation to state Proposition~\ref{prop:decomp_orb_stab}. Given a twisted permutation representation~$\rho$ of order~$n$ with twist representation~$\sigma$, we define for~$I \subseteq \{1,\ldots,n\}$:
\begin{gather}
\label{eq:def:twisted_perm_representation:block}
  V(\rho)_I
=
  \linspan \{ \CC e_i \otimes V(\sigma) \,:\, i \in I \}
\subseteq
  V(\rho)
\tx{.}
\end{gather}
If~$I = \{ i \}$, we write~$V(\rho)_i$ for~\eqref{eq:def:twisted_perm_representation:block}. Note that~$V(\rho)_i \cong V(\sigma)$.

Composition of~$\rho^\wr$ with the projection~$H \wr \rmS_n \ra \rmS_n$ yields a map~$\rho^\wr_\pi :\, G \ra \rmS_n$. Inspecting the definition of~$\sigma \wr \rmS_n$ in~\eqref{eq:def:wreath_representation} and using that~$G$ is a group, we see that
\begin{gather}
\label{eq:def:twisted_perm_representation:block_stabilizer}
  \Stab_G\big( V(\rho)_I \big)
=
  \big\{ g \in G \,:\, \rho(g) V(\rho)_I \subseteq V(\rho)_I \big\}
=
  \big\{ g \in G \,:\, \rho(g) V(\rho)_I = V(\rho)_I \big\}
=
  \big( \rho^\wr_\pi \big)^{-1} \big( \Stab_{\rmS_n}(I) \big)
\subseteq
  G
\tx{.}
\end{gather}
Restricting~$\rho$ to this stabilizer yields a representation~$\rho_I$ on~$V(\rho)_I$:
\begin{gather}
\label{eq:def:twisted_perm_representation:block_representation}
  \rho_I :\,
  \Stab\big( V(\rho)_I \big)
\lra
  \GL{}\big( V(\rho)_I \big)
\tx{.}
\end{gather}
If~$I = \{i\}$ we write~$\rho_i$ for it.

\begin{proposition}
\label{prop:decomp_orb_stab}
Let~$\rho$ be a twisted permutation representation of a discrete group~$G$ of order~$n$. Further, let~$R \subseteq \{1,\ldots,n\}$ be a set of representatives for the action of~$\rho^\wr_\pi(G) \subseteq \rmS_n$. Then with~$\rho_i$ as in~\eqref{eq:def:twisted_perm_representation:block_representation}, we have an isomorphism
\begin{gather}
\label{eq:prop:decomp_orb_stab}
  \rho
\cong
  \bigoplus_{i \in R}
  \Ind_{\Stab(V(\rho)_i)}^{G} \rho_i
\tx{.}
\end{gather}
\end{proposition}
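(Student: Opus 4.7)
The plan is to recognize the statement as a representation-theoretic incarnation of the orbit-stabilizer theorem and to verify, for each orbit, that the corresponding block of~$V(\rho)$ carries the structure of an induced representation in the sense of its classical subspace characterization.

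First, I would analyse how~$G$ permutes the subspaces~$V(\rho)_j$ for~$j \in \{1,\ldots,n\}$. By~\eqref{eq:def:wreath_representation} and the hypothesis that~$\rho$ factors as~$\varphi_\rho^\ast \circ (\std(W) \wr \rmS_n) \circ \rho^\wr$, for any~$g \in G$ and any~$j$ we have
\begin{gather*}
  \rho(g)\, V(\rho)_j
=
  V(\rho)_{\rho^\wr_\pi(g)(j)}
\tx{.}
\end{gather*}
Consequently, if~$O \subseteq \{1,\ldots,n\}$ is an orbit of~$\rho^\wr_\pi(G)$, then the block~$V(\rho)_O$ defined in~\eqref{eq:def:twisted_perm_representation:block} is stable under~$\rho$, and we obtain the $G$\nbd equivariant decomposition
\begin{gather*}
  V(\rho)
\;=\;
  \bigoplus_{i \in R}
  V(\rho)_{O_i}
\tx{,}\qquad
  O_i := \rho^\wr_\pi(G) \cdot i
\tx{.}
\end{gather*}

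Fix~$i \in R$ and abbreviate~$H_i := \Stab_G(V(\rho)_i)$. By~\eqref{eq:def:twisted_perm_representation:block_stabilizer} the orbit map~$g \mto \rho^\wr_\pi(g)(i)$ descends to a bijection between~$G \slash H_i$ and~$O_i$. Pick a set of coset representatives~$\{g_j\}_{j \in O_i} \subseteq G$ with~$\rho^\wr_\pi(g_j)(i) = j$; then by the displayed identity above we have~$\rho(g_j) V(\rho)_i = V(\rho)_j$, so that
\begin{gather*}
  V(\rho)_{O_i}
\;=\;
  \bigoplus_{j \in O_i} V(\rho)_j
\;=\;
  \bigoplus_{g H_i \in G \slash H_i}
  \rho(g)\, V(\rho)_i
\tx{.}
\end{gather*}
This is precisely the subspace characterization of the induced representation: since~$V(\rho)_i$ is an~$H_i$\nbd subrepresentation with the action~$\rho_i$, and since its $G$\nbd translates by a system of coset representatives fill out~$V(\rho)_{O_i}$ as an internal direct sum, we conclude~$V(\rho)_{O_i} \cong \Ind_{H_i}^G \rho_i$ via the canonical map~$g \otimes v \mto \rho(g) v$.

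Summing over~$i \in R$ yields~\eqref{eq:prop:decomp_orb_stab}. The only point requiring some care is the independence from the choice of coset representatives~$g_j$, but this is immediate because two choices differ by an element of~$H_i$, which preserves~$V(\rho)_i$ by definition. The step that bears the most bookkeeping is translating the permutation-action language of~$\rho^\wr_\pi$ into the subspace language of~$V(\rho)_j$, which is a direct consequence of the definition of~$\sigma \wr \rmS_n$ and should not present a genuine obstacle.
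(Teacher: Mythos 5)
Your proof is correct and follows the same orbit decomposition strategy as the paper. The only difference is cosmetic: where the paper writes down an explicit intertwiner $\varphi\colon \CC^n\otimes V(\sigma)\to\CC[G]\otimes_{G_1}V(\rho)_1$ and verifies the equivariance by a direct computation in the wreath product, you invoke the standard subspace characterization of induced modules (a $G$\nbd module that is the internal direct sum of the $G$\nbd translates of an $H$\nbd submodule, indexed by $G\slash H$, is the induction of that submodule), which lets you skip that calculation.
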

\begin{proof}
We can and will replace~$G$ by its image under~$\rho^\wr(G)$. This allows us to assume that~$G \subseteq H \wr \rmS_n$ and~$\rho$ is the restriction of~$\sigma \wr \rmS_n$ to~$G$. We also identify~$V(\rho)$ with~$\CC^n \otimes V(\sigma)$ via the isomorphism~$\varphi_\rho$ in Definition~\ref{def:twisted_perm_representation}. We write~$G_\pi \subseteq \rmS_n$ for the image of~$G$ under the quotient map from~$H \wr \rmS_n$ to~$\rmS_n$. We write~$G i$ for the orbit of~$1 \le i \le n$ under~$G_\pi$.

With~$R$ as in the statement of the proposition, we have the direct sum decomposition
\begin{gather*}
  V(\rho)
=
  \linspan \{ \CC e_j \,:\, 1 \le j \le n \} \otimes V(\sigma)
=
  \bigoplus_{i \in R}
  \linspan \{ e_j \,:\, j \in G i \} \otimes V(\sigma)
=
  \bigoplus_{i \in R}
  V(\rho)_{G i}
\tx{.}
\end{gather*}
It yields the isomorphism
\begin{gather*}
  \rho
\cong
  \bigoplus_{i \in R} \rho_{G i}
\tx{.}
\end{gather*}

We can and will replace~$G$ and~$\rho$ by~$\Stab(V(\rho)_{G i})$ and~$\rho_{G i}$ for any fixed~$i \in R$ to assume that~$G_\pi$ acts transitively on~$\{1,\ldots,n\}$ in the remainder of the proof. To further ease notation, we assume that~$R = \{1\}$ and set~$G_1 = \Stab(V(\rho)_1)$. We then have to show that
\begin{gather*}
  \rho \cong \Ind_{G_1}^G\,\rho_1
\tx{.}
\end{gather*}

Since~$G_\pi$ acts transitively on~$\{1,\ldots,n\}$, we can fix elements~$(\pi_i,\ga_i) \in G$ with~$\pi_i(1) = i$ for~$2 \le i \le n$, and let~$(\pi_1,\ga_1) \in G$ be the trivial element. We define a linear map by
\begin{gather*}
  \varphi :\,
  \CC^n \otimes V(\sigma)
\lra
  \CC[G] \otimes_{G_1} V(\rho)_1
\tx{,}\quad
  e_i \otimes w
\mto
  (\pi_i,\ga_i) \otimes \big( e_1 \otimes \sigma( \ga_i(1)^{-1} ) w \big)
\tx{.}
\end{gather*}
It is an isomorphism of vector spaces, since the~$(\pi_i,\ga_i)$ are coset representatives for the quotient of~$G$ by~$G_1$. We have to show that~$\varphi$ intertwines~$\rho$ and the induction of~$\rho_1$.

To this end, we fix~$1 \le i \le n$ and~$(\pi,\ga) \in G$, and set~$j = \pi(i)$. Note that the inverse of~$(\pi_j,\ga_j)$ equals~$(\pi_j^{-1}, \ga_j^{-1} \circ \pi_j^{-1})$, where the inverse of~$\ga_j$ is taken pointwise. For simplicity, we write the action of~$G$ via the induced representation of~$\rho_1$ as multiplication. Then we have to check that 
\begin{gather*}
  \varphi\big( \rho(\pi,\ga) (e_i \otimes w) \big)
=
  (\pi,\ga)\, \varphi\big( e_i \otimes w \big)
\tx{.}
\end{gather*}

We insert the action of~$\rho(\pi,\ga)$ using~$j = \pi(i)$ on the left hand side and then the definition of~$\varphi$ on both sides. This leads us to prove that
\begin{gather}
\label{eq:prop:decomp_orb_stab:final_intertwining_equality}
  (\pi_j,\ga_j) \otimes \big( e_1 \otimes \sigma\big( \ga_j(1)^{-1} \ga(i) \big) w \big)
=
  (\pi,\ga) (\pi_i,\ga_i) \otimes \big( e_1 \otimes \sigma( \ga_i(1)^{-1} ) w \big)
\tx{.}
\end{gather}
We now simplify the right hand side. Observe that~$\pi(\pi_i(1)) = j$, and thus~$(\pi_j,\ga_j)^{-1}\, (\pi,\ga)\, (\pi_i,\ga_i) \in G_1$.
We therefore have
\begin{gather}
\label{eq:prop:decomp_orb_stab:final_intertwining_equality_rhs}
  (\pi,\ga) (\pi_i,\ga_i) \otimes \big( e_1 \otimes \sigma(\ga_i(1)^{-1}) w \big)
=
  (\pi_j,\ga_j)
  \otimes
  \rho_1\big( (\pi_j,\ga_j)^{-1}\, (\pi,\ga)\, (\pi_i,\ga_i) \big)\,
  \big( e_1 \otimes \sigma(\ga_i(1)^{-1}) w \big)
\tx{.}
\end{gather}

We calculate the product in~$G_1$ that appears in the second tensor factor on the right hand side:
\begin{gather*}
  (\pi_j,\ga_j)^{-1}\, (\pi,\ga)\, (\pi_i,\ga_i)
=
  \big(
  \pi_j^{-1} \pi \pi_i,\,
  \ga_j^{-1} \circ \pi_j^{-1} \pi \pi_i \cdot
  \ga \circ \pi_i \cdot
  \ga_i
  \big)
\tx{.}
\end{gather*}
We insert this into the right hand side of~\eqref{eq:prop:decomp_orb_stab:final_intertwining_equality_rhs} and use the definition of~$\rho_1$, which is a restriction of~$\rho = \sigma \wr \rmS_n$, from~\eqref{eq:def:wreath_representation} to obtain
\begin{align*}
&
  (\pi_j,\ga_j)
  \otimes
  \rho_1\big( (\pi_j,\ga_j)^{-1}\, (\pi,\ga)\, (\pi_i,\ga_i) \big)\,
  \big( e_1 \otimes \sigma(\ga_i(1)^{-1}) w \big)
\\
={}&
  (\pi_j,\ga_j)
  \otimes
  \big( e_1
  \otimes
  \sigma\big( (
  \ga_j^{-1} \circ \pi_j^{-1} \pi \pi_i \cdot
  \ga \circ \pi_i \cdot
  \ga_i
  )(1) \big)\,
  \sigma(\ga_i(1)^{-1})
   w
   \big)
\\
={}&
  (\pi_j,\ga_j)
  \otimes
  \big( e_1
  \otimes
  \sigma\big(
  \ga_j^{-1}(1) \ga(i) \ga_i(1)
  \ga_i(1)^{-1}
  \big)
  w
  \big)
\tx{.}
\end{align*}
We recognize this as the left hand side of~\eqref{eq:prop:decomp_orb_stab:final_intertwining_equality_rhs}, and thus conclude the proof.
\end{proof}

\subsection{Translation orbits and Fourier expansions}
\label{ssec:translation_orbits_fourier_expansions}

We next specialize Proposition~\ref{prop:decomp_orb_stab} to the case of twisted permutation types~$\rho$ for~$\Ga_1(N)$ restricted to~$T^\ZZ \subseteq \Ga_1(N)$. The action of~$\rho(T)$ yields valuable information on the support of the Fourier expansions of modular forms of type~$\rho$.

Assuming that~$\rho(T)$ is diagonalizable, then we have~$\rmfe(\rho(T) f) = \rho(T) \rmfe(f)$. That is, $\rho(T)$ intertwines with the Fourier expansion map~$\rmfe$ in~\eqref{eq:def:fourier_expansion_map}. We equip~$\rmFE(\CC)$ with the~$T$\nbd action
\begin{gather}
\label{eq:fourier_expansion_taction}
  T\, \sum_{n \in \QQ} c(n) q^n
=
  \sum_{n \in \QQ} e(n) c(n) q^n
\tx{.}
\end{gather}
This action intertwines with the scalar-valued slash action of~$T$ on modular forms. The isotypic decomposition of~$\rmFE(\CC)$, using notation adopted from~\eqref{eq:Gamma1_isotypic}, is
\begin{gather}
\label{eq:fourier_expansion_isotypic_decomposition}
  \rmFE(\CC)
=
  \bigoplus_{n \in \QQ \slash \ZZ}
  \rmFE(\CC)[e(n)]
\tx{,}\quad
  \rmFE(\CC)[e(n)]
=
  q^n \CC \big\llbrkt q \big\rrbrkt \big[q^{-1}\big]
\tx{.}
\end{gather}

Combining this, we find that for~$f \in \rmM_k(\rho)$, we have
\begin{gather}
\label{eq:fourier_expansion_tinvariance}
  \rmfe(f)
=
  \rmfe\big( f \big|_{k,\rho}\,T \big)
=
  T\, \rmfe(f)
\tx{,}
\end{gather}
where~$T$ acts on the first tensor component of~$\rmFE(\CC) \otimes V(\rho)$ as in~\eqref{eq:fourier_expansion_taction} and on the second one by~$\rho(T)$. In other words, the Fourier expansion of modular forms is~$T$-invariant. This justifies the next corollary to Proposition~\ref{prop:decomp_orb_stab}.

\begin{corollary}
\label{cor:diag_tw_perm_taction}
Fix a twisted permutation type~$\rho$ for~$\Ga \subseteq \SL{2}(\ZZ)$ with~$T \in \Ga$. Let~$n$ be the twist order of~$\rho$ and~$\sigma :\, H \ra \GL{}(V(\sigma))$ its twist representation. Further, let~$R$ be a set of representatives for the action of\/~$T$ on~$\{1,\ldots,n\}$ via its image in~$H \wr \rmS_n$. Given~$1 \le i \le n$, write~$n_i = \# T^\ZZ i$ for the orbit length of~$i$. Define the representation~$\sigma_i$ of~$T^{n_i \ZZ}$ on~$V(\sigma)$ by
\begin{gather*}
  e_i \otimes \sigma_i\big( T^{n_i} \big) w
=
  \rho(T^{n_i}) ( e_i \otimes w )
\tx{.}
\end{gather*}

Then given a decomposition
\begin{gather*}
  \sigma_i
=
  \bigoplus_{s \in \QQ \slash n_i \ZZ} \sigma_i[e(s)]
\tx{,}\quad
  \sigma_i[e(s)]\big( T^{n_i} \big)
=
  e(s)
\tx{,}
\end{gather*}
into isotypic representations, in which only finitely many direct summands are nontrivial, we have the following decomposition of the restriction of~$\rho$ to~$T^\ZZ$ into isotypic representations:
\begin{gather*}
  \rho
=
  \bigoplus_{\substack{i \in R\\m \pmod{n_i}\\s \in \QQ \slash n_i \ZZ}}
  \Big(
  \CC
  \sum_{h \pmod{n_i}}
  e\big( \mfrac{-h (m + s)}{n_i} \big)
  \rho\big( T^h \big) \big( e_{i} \otimes \sigma_i[s] \big)
  \Big)
\tx{.}	
\end{gather*}
The action of\/~$T$ on the summand of index~$(i,m,s)$ is by~$e((m + s) \slash n_i)$.
\end{corollary}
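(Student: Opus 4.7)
The plan is to obtain the decomposition by applying Proposition~\ref{prop:decomp_orb_stab} to the restriction of $\rho$ to the cyclic subgroup $T^\ZZ$ and then carrying out the standard isotypic decomposition of an induced representation of a cyclic group, accompanied by an explicit calculation of the eigenvectors of $\rho(T)$.

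First I would specialize the setup of Proposition~\ref{prop:decomp_orb_stab} to $G = T^\ZZ$ acting via $\rho^\wr$. By definition of $n_i$, the image $\rho^\wr_\pi(T) \in \rmS_n$ has $i$ in an orbit of length $n_i$, so $\Stab_{T^\ZZ}(V(\rho)_i) = T^{n_i\ZZ}$. Unwinding the definition of $\sigma_i$ in the statement, the block representation $\rho_i$ introduced in~\eqref{eq:def:twisted_perm_representation:block_representation} is carried to $\sigma_i$ under the identification $V(\rho)_i \cong V(\sigma)$ afforded by the twisted permutation structure. Proposition~\ref{prop:decomp_orb_stab} then yields
\[
  \rho|_{T^\ZZ}
  \;\cong\;
  \bigoplus_{i \in R}\,
  \Ind_{T^{n_i\ZZ}}^{T^\ZZ}\, \sigma_i.
\]

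Next, since induction commutes with direct sums, the assumed isotypic decomposition $\sigma_i = \bigoplus_{s} \sigma_i[e(s)]$ (with only finitely many nontrivial summands) gives
\[
  \rho|_{T^\ZZ}
  \;\cong\;
  \bigoplus_{i \in R,\, s \in \QQ\slash n_i\ZZ}\,
  \Ind_{T^{n_i\ZZ}}^{T^\ZZ}\, \sigma_i[e(s)].
\]
It remains to decompose each summand into $T$-eigenspaces. Because $T^{n_i}$ acts on $\sigma_i[e(s)]$ as the scalar $e(s)$, the eigenvalues of $\rho(T)$ on the induced representation must be $n_i$-th roots of $e(s)$, which is exactly the set $\{e((m+s)/n_i) : m \pmod{n_i}\}$. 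For each $w \in \sigma_i[e(s)]$ and each $m \pmod{n_i}$, I would verify by direct calculation that
\[
  v_{i,m,s,w}
  \;:=\;
  \sum_{h \pmod{n_i}}
  e\big(\mfrac{-h(m+s)}{n_i}\big)\,
  \rho(T^h)\big( e_i \otimes w \big)
\]
is an eigenvector of $\rho(T)$ with eigenvalue $e((m+s)/n_i)$. The computation reindexes $h \mapsto h-1$ in the sum; the wrap-around term at $h = n_i$ involves $\rho(T^{n_i})(e_i \otimes w) = e(s)(e_i \otimes w)$, whose combined phase $e(-(m+s)) \cdot e(s) = e(-m)$ equals $1$ because $m$ is an integer, so the reindexing produces exactly the overall factor $e((m+s)/n_i)$. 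A dimension count—there are $n_i$ choices of $m$ and the induced representation has dimension $n_i \dim \sigma_i[e(s)]$—then shows that the $v_{i,m,s,w}$ span, which completes the decomposition and identifies the $T$-action on the $(i,m,s)$-summand as claimed.

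The main obstacle is essentially bookkeeping rather than a conceptual difficulty: carefully identifying the stabilizer and the block representation so that Proposition~\ref{prop:decomp_orb_stab} applies verbatim, and tracking the signs and wrap-around term in the eigenvector verification. Neither step hides a subtlety, which is reassuring since this corollary is being invoked as a workhorse for the support analysis of Fourier expansions in Section~\ref{ssec:translation_orbits_fourier_expansions}.
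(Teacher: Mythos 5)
Your proposal is correct and takes essentially the same approach as the paper. The paper's proof is only a three-sentence sketch — ``Proposition~\ref{prop:decomp_orb_stab} reduces the proof to a computation of an isotypic decomposition of an induced representation. A calculation shows that the sums in the given decomposition of~$\rho$ are well-defined. Also the action of~$T$ follows from a direct calculation.'' — and your write-up supplies exactly those calculations: the identification of $\Stab_{T^\ZZ}(V(\rho)_i) = T^{n_i\ZZ}$ and $\rho_i \cong \sigma_i$ so that Proposition~\ref{prop:decomp_orb_stab} gives $\rho|_{T^\ZZ} \cong \bigoplus_{i\in R} \Ind_{T^{n_i\ZZ}}^{T^\ZZ}\sigma_i$, the eigenvector verification with the wrap-around term $\rho(T^{n_i})(e_i\otimes w) = e(s)(e_i\otimes w)$, and the phase cancellation $e(-(m+s))\,e(s) = e(-m) = 1$. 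Your reindexing argument also implicitly handles the paper's ``well-definedness'' point, since independence of the choice of representatives for $h\pmod{n_i}$ is exactly the same cancellation.
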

\begin{proof}
Proposition~\ref{prop:decomp_orb_stab} reduces the proof to a computation of an isotypic decompositions of an induced representation. A calculation shows that the sums in the given decomposition of~$\rho$ are well-defined. Also the action of~$T$ follows from a direct calculation.
\end{proof}

\section{Implementation}%
\label{sec:implementation}

In this section, we briefly highlight two of the key ingredients of our implementation of Algorithm~\ref{alg:main_algorithm} and heuristically argue about its runtime. This section contains neither formal statements nor proofs, and thus takes the role of a commentary.

The discussion of the first two aspects of our implementation builds up on the results in Section~\ref{sec:decompositions}. The first one concerns the computation of invariants, which we will see in Section~\ref{ssec:time_complexity} is the slowest part of Algorithm~\ref{alg:main_algorithm}. The second one concerns the computation of the Fourier expansions, that we require in line~\ref{alg:row:main_algorithm:append_row} of Algorithm~\ref{alg:main_algorithm}. The major part of this section is dedicated to a heuristic analysis of our implementation's runtime performance. This is presented in Section~\ref{ssec:time_complexity}. In this section, we assume that fundamental arithmetic operations cost~$\rmO(1)$, which is correct for some base fields, but not for~$\QQab$. In Section~\ref{ssec:arithmetic_base_field}, we thus complement Section~\ref{ssec:time_complexity} with a discussion of the current state of base field arithmetic in our implementation.

In this section, we also discuss a variant of Algorithm~\ref{alg:main_algorithm} based on the decompositions
\begin{gather*}
  \rho_N
=
  \bigoplus_{\chi \pmod{N}} \rho_\chi
\quad\tx{and}\quad
  \cE_k(N)
=
  \bigoplus_{\chi \pmod{N}} \cE_k(\chi)
\tx{,}
\end{gather*}
where~$\chi$ runs through Dirichlet characters modulo~$N$, $\rho_\chi \hra \rho_N$ is as in~\eqref{eq:def:rhoN_rhochi}, and~$\cE_k(\chi) \subseteq \cE_k(N)$ is the corresponding subspace with~$\rho_\chi^\vee \thra \cE_k(\chi)$ as in Proposition~\ref{prop:abstract_eis}. It yields the direct sum decomposition
\begin{gather}
\label{eq:invariants_dirichlet_decomposition}
  \bigoplus_{\chi \pmod{N}}
  \rmH^0\big( \rho_\chi^\vee \otimes \rho,\, \QQab \big)
  \,\oplus\,
  \bigoplus_{\chi_1, \chi_2 \pmod{N_0}}
  \rmH^0\big( \rho_{\chi_1}^\vee \otimes \rho_{\chi_2}^\vee \otimes \rho,\, \QQab \big)
\end{gather}
that replaces the space of invariants in Algorithm~\ref{alg:main_algorithm}. The advantage of this decomposition lies in reducing the size of each representation for which we have to compute invariants on the left hand side of~\eqref{eq:def:thm:main_algorithm_preparation:phiE} at the expense of introducing further roots of unity in the Fourier expansion on its right hand side. When working numerically or modulo a suitable prime~$p$, the latter is hardly relevant for performance. A more detailed discussion of this issue can be found in Section~\ref{ssec:arithmetic_base_field}.

\subsection{Computation of invariants}
\label{ssec:computation_invariants}

The calculation of the invariants
\begin{gather*}
  \rmH^0\big( \rho_N^\vee \otimes \rho,\, \QQab \big)
  \oplus
  \rmH^0\big( \rho_{N_0}^\vee \otimes \rho_{N_0}^\vee \otimes \rho,\, \QQab \big)
\end{gather*}
in line~\ref{alg:row:main_algorithm:compute_invariants} of Algorithm~\ref{alg:main_algorithm} was left opaque. This kind of computation can be performed via GAP~\cite{gap-4-11-1}, and very efficiently so in general. We have found, however, that a custom made implementation that leverages specific features of the relevant arithmetic types achieves better performance. We would like to point out that, nevertheless, all calculations with permutation groups in our current implementation are invariably performed via GAP.\@

The motivation and starting point for our specialized implementation is the observation that~$\rho_N$ and~$\rho_\chi$, defined in~\eqref{eq:def:rhoN_rhochi}, are naturally twisted permutation types in the sense of Definition~\ref{def:twisted_perm_representation}. What is more, many of the arithmetic types~$\rho$ for which ones usually computes modular forms are twisted permutation types, too. Besides the cases~$\rho = \rho_N$ and~$\rho = \rho_\chi$, this includes inductions of Weil representations that correspond to Jacobi forms for subgroups of~$\SL{2}(\ZZ)$ by work of Skoruppa~\cite{skoruppa-2007}, or signed permutation representations that appear in generalized Moonshine~\cite{gaberdiel-persson-ronellenfitsch-volpato-2013,cheng-duncan-harvey-2014}.

Duals and tensor products of twisted permutation types are twisted permutation types, and twist orders are multiplicative under tensor products. In particular, if~$\rho$ is a twisted permutation type of twist order~$n$ and twist dimension~$d$, then
\begin{gather*}
  \rho_{N_0}^\vee \otimes \rho_{N_0}^\vee \otimes \rho
\quad\tx{and}\quad
  \rho_{\chi'_1}^\vee \otimes \rho_{\chi'_2}^\vee \otimes \rho
\end{gather*}
are twisted permutation types of twist dimension~$d$ and twist order~$\rmO(n N_0^{4+\epsilon})$ and~$\rmO(n N_0^{2+\epsilon})$. We can profit in the calculation of its invariants from the improved time complexity of the orbit-stabilizer algorithm as opposed to row echelon reduction. With~$\kappa \ge 2$ is the time complexity exponent of matrix multiplication explained in Section~\ref{ssec:time_complexity}, row echelon reduction would yield runtime~$\rmO((n N_0^{4 +\epsilon} )^\kappa )$ and~$\rmO((n N_0^{2 +\epsilon} )^\kappa )$. By Section~4.1 of~\cite{holt-eick-obrien-2005}, in our setting the orbit-stabilizer algorithm has runtime $\rmO(n^2 N_0^{8+\epsilon})$ and~$\rmO(n^2 N_0^{4+\epsilon})$, but by using stabilizer chains arising from the tensor product structure one can reduce this further to~$\rmO(n^2 N_0^{4+\epsilon})$ and~$\rmO(n^2 N_0^{2+\epsilon})$. Specifically, the runtime of the orbit-stabilizer algorithm receives a quadratic contribution from the size of the set acted on. In first case this set is the Cartesian product~$\Ga \slash \Ga_1(N_0) \times \Ga \slash \Ga_1(N_0)$ or~$\Ga \slash \Ga_0(N_0) \times \Ga \slash \Ga_0(N_0)$, and when using stabilizer chains it is~$\Ga \slash \Ga_1(N_0)$ or~$\Ga \slash \Ga_0(N_0)$. One combines this with usual linear algebra on the remaining~$d$ dimensions, which contributes~$\rmO(N_0^{3+\epsilon} d^2 + d^\kappa)$ to the runtime in the first case and~$\rmO(n N_0^{2+\epsilon} d^2 + d^\kappa)$ in the second case. Observe that the terms~$N_0^{3+\epsilon}$ and~$n N_0^{2+\epsilon}$ are connected to the number of generators of stabilizers resulting from the permutation group computation. This is not the number of generators guaranteed by the orbit-stabilizer algorithm, but results from an application of Farey fractions~\cite{kurth-long-2008,monien-2017-preprint} to the geometry of modular groups.

Our reformulation in terms of tensor products of twisted permutation types enables another major optimization. Stabilizer chains, which we mentioned before in passing, can be utilized in a similar way to the Schreier-Sim algorithm in Section~4.4.2 of~\cite{holt-eick-obrien-2005}. In practice, the required number of generators for the intermediate stabilizers that arise can be reduces drastically by the use of Farey fractions. Interestingly, Farey fractions also allow us to benefit from some of the advantages of the orbit-Schreier-vector algorithm in Section~4.4.1 of~\cite{holt-eick-obrien-2005}. In total, this seems to improves the runtime contribution of the permutation group computation to~$\rmO(n + N_0^{1 + \epsilon})$.

\subsection{Inflation and deflation via translation orbits}
\label{ssec:infl_defl_torbits}

In this section, we show how the decomposition in Section~\ref{ssec:translation_orbits_fourier_expansions} can be used to a priori bound the dimension of the space of Fourier expansions~$\rmfe_P(\rmM_k(\rho))$. This generalizes the concept of cusp expansions: When computing Fourier expansions of~$f |_k \ga$ for a modular form~$f \in \rmM_k(\Ga_0(N),\chi)$ and arbitrary~$\ga \in \SL{2}(\ZZ)$, then it suffices to consider representatives~$\ga$ of~$\Ga_0(N) \backslash \SL{2}(\ZZ) \slash \Ga_1(N)$, since the action of~$T \in \SL{2}(\ZZ)$ can be calculated on Fourier series.

We recall from Lemma~\ref{la:invariants_are_modular_forms} the inclusion
\begin{gather}
  \rmH^0\big( \cE_k(N) \otimes \rho \big)
  \,+\,
  \rmH^0\big( (\cE_l(N_0) \cdot \cE_{k-l}(N_0)) \otimes \rho \big)
\subseteq
  \rmM_k(\rho)
\tx{.}
\end{gather}
Fourier expansions that we handle are thus of the same level as~$\rho$, which impacts our implementation of Algorithm~\ref{alg:main_algorithm}. We call the resulting procedures deflation and inflation, adjusting language from the AbstractAlgebra package~\cite{fieker-hart-hofmann-johansson-2017} for power, Laurent, and Puiseux series.

Suppose that~$\rho$ has level~$N$ and~$\QQab$-structure as in Section~\ref{ssec:main_algorithm:algebraic_fourier_coefficients}. Then naively, we have
\begin{gather*}
  \rmfe\big( \rmM_k(\rho) \big)
\subseteq
  \CC \llbrkt q^{\frac{1}{N}} \rrbrkt
  \otimes
  V(\rho)
\tx{.}
\end{gather*}
Using merely this inclusion, the number of columns of the matrix~$M$ in Algorithm~\ref{alg:main_algorithm} is~$\rmO(N P \dim(\rho))$. Since we apply row echelon reduction to~$M$, the total contribution to the runtime is~$\rmO((N P \dim(\rho))^\kappa)$, where~$\kappa$ is the time complexity exponent explained in Section~\ref{ssec:time_complexity}.

In Section~\ref{ssec:translation_orbits_fourier_expansions}, however, we saw that the image of~$\rmM_k(\rho)$ under~$\rmfe_P$ is~$T$-invariant. We we will use that, since~$\rho$ is a congruence type, the transformation~$\rho(T)$ is diagonalizable and of finite order. Writing~$\rho_T$ for the restriction of~$\rho$ to the subgroup~$T^\ZZ \subset \SL{2}(\ZZ)$, the isotypic decomposition of~$\rmFE(\CC)$ given in~\eqref{eq:fourier_expansion_isotypic_decomposition} ensures that
\begin{gather*}
  \rmfe\big( \rmM_k(\rho) \big)
\subseteq
  \bigoplus_{\substack{s \in \frac{1}{N} \ZZ\\0 \le s < 1}}
  q^s \CC \llbrkt q \rrbrkt
  \otimes
  V\big( \rho_T[e(-s)] \big)
\tx{.}
\end{gather*}
In those cases in which this extends to Fourier expansions in~$\rmFE(\QQab) \otimes V(\rho,\QQab)$, it allows us to reduce the size of~$M$ in Algorithm~\ref{alg:main_algorithm} to~$\rmO(P \dim(\rho))$, thus removing the factor~$N$.

The purpose of Corollary~\ref{cor:diag_tw_perm_taction} is to make this reduction computationally efficient. We assume that~$\rho$ is a twisted permutation type, and for simplicity assume further that~$\rho = \sigma \wr \rmS_n$. We adopt notation from Corollary~\ref{cor:diag_tw_perm_taction}. In particular, we let~$R \subseteq \{1,\ldots,n\}$ be a set of representatives for the action arising from~$\rho(T^\ZZ)$. Then we have a deflation and inflation maps
\begin{alignat*}{3}
  \rmFE(\CC) \otimes \CC^n \otimes V(\sigma)
&{}\lra
  \rmFE(\CC) \otimes \CC^R \otimes V(\sigma)
\tx{,}\quad
&
  \sum_{i = 1}^n f_i
&{}\lmto
  \sum_{i \in R} f_i
\tx{,}\quad
&&
  f_i \in \rmFE(\CC) \otimes \CC e_i \otimes V(\sigma)
\\
  \rmFE(\CC) \otimes \CC^R \otimes V(\sigma)
&{}\lra
  \rmFE(\CC) \otimes \CC^n \otimes V(\sigma)
\tx{,}\quad
&
  \sum_{i \in R} f_i
&{}\lmto
  \sum_{\substack{i \in R\\h \pmod{\# T^\ZZ i}}} T\,f_i
\tx{,}\quad
&&
  f_i \in \rmFE(\CC) \otimes \CC e_i \otimes V(\sigma)
\tx{,}
\end{alignat*}
where~$\# T^\ZZ i$ is the length of the orbit of~$i$ under~$T^\ZZ$. In general, deflation is left inverse to inflation. On the subspace of~$T$-invariant Fourier expansions, that is, for Fourier expansions of modular forms, deflation is injective, and inflation is an inverse to it.

\subsection{Arithmetic operations in the base fields}
\label{ssec:arithmetic_base_field}

In Section~\ref{ssec:time_complexity} we make the assumption that each fundamental arithmetic operation contributes~$\rmO(1)$ to the runtime of our algorithm. This is justified for calculations over finite fields and for numerical calculations of bounded precision at a limited number of infinite places. However, for exact arithmetic over~$\QQab$ and cyclotomic fields of arbitrary degree this is false, which on its own is a reason why the current implementation of our algorithm falls behind alternative ones. This issue gains further importance in situations where intermediate fields of higher degree appear in the calculation, as is the case in the variant of Algorithm~\ref{alg:main_algorithm} discussed in the beginning of Section~\ref{sec:implementation}.

The type systems provided by Julia and Nemo/Hecke allowed us to provide an implementation that is polymorphic in the type of the base ring elements. In addition to the standard requirements for fields in Nemo/Hecke, our implementation makes use of a cyclotomic tower over the base field that we provide for cyclotomic fields, for finite fields, and for~$\CC$ modeled by ball arithmetic (which is an approximate field and thus requires some extra steps to accommodate). Our implementation thus is sufficiently flexible to accommodate advanced methods like multi-modular calculations. From a practical perspective, consider the performance of matrix operations on, say,
\begin{gather*}
  \Mat{m,n}\big( \bigoplus_{q \in \cQ} \bbF_q \big)
\quad\tx{and}\quad
  \Mat{m,n}(\ZZ) \otimes \bigoplus_{q \in \cQ} \bbF_q
\end{gather*}
for a finite set~$\cQ$ of powers of mutually distinct primes. The former fits into the polymorphic implementation, but suffers from degraded cache locality. In future work when we apply our implementation, we plan to implement the latter variant.

An implementation over~$\bigoplus_{q \in \cQ} \bbF_q$ does not suffice for a complete multi-modular implementation, which requires height bounds. For clarity and comparison observe that Cohen's implementation of scalar-valued modular forms, which we discuss in Section~\ref{ssec:product_scalar_valued_eisenstein_series}, does employ multi-modular arithmetic to calculate cusp expansions. He is able to derive height bounds from work of Katz~\cite{katz-1973} and Deligne-Rapoport~\cite{deligne-rapoport-1973}, since he first determines Fourier expansions at~$\infty$ via a trace formula. In our algorithm we do not have access to a priori Fourier expansions of any component of our arithmetic type~$\rho$, and the work of Deligne-Rapoport would not apply directly, either. On the other hand, when naively deriving a height bound from the products of Eisenstein series, we obtain a prohibitively large bound.

We are thus naturally led to an approach that incorporates multi-modular calculations with calculations at the infinite places. We have not made any attempt to implement our algorithm using machine-size floating point arithmetic, but rely on ball arithmetic provided by Arb~\cite{johansson-2017} to track rounding errors. At the time of writing, a multi-modular variant of our implementation is not yet functional, but we intend to further evaluate it as it seems one of the more promising ways to perform Fourier expansions over large cyclotomic fields that appear when determining cusp expansions of modular forms of large level.

\subsection{Heuristic discussion of time complexity}
\label{ssec:time_complexity}

We next discuss the time complexity of computing a basis of~$\rmM_k(\rho)$ via Algorithm~\ref{alg:main_algorithm} and its modification arising from~\eqref{eq:invariants_dirichlet_decomposition}. Throughout the section, we assume that~$\rho$ is a congruence type of level~$N$ for~$\Ga \subseteq \SL{2}(\ZZ)$ with~$\Ga_1(N) \subseteq \Ga$ and that~$\rho(T)$ is diagonalizable. Note that any arithmetic type can be considered as a twisted permutation type of twist order one. We therefore can and will view~$\rho$ as a twisted permutation type of twist order~$n$ and twist dimension~$d$.

We assume also that fundamental arithmetic operations cost~$\rmO(1)$, which covers numerical calculations and calculations modulo primes~$p$ in a fixed range, but is incorrect for computations in number fields. The time complexity of Algorithm~\ref{alg:main_algorithm} depends on the time complexity of various matrix operations. Given~$n \times n$ matrices~$A$ and~$B$, we assume that the time complexity of computing the product~$A B$, the inverse~$A^{-1}$, and of diagonalizing~$A$ is~$\rmO(n^{\kappa+\epsilon})$, where here and in the remainder of this section~$\kappa$ denotes the exponent in the time complexity of matrix multiplication~\cite{demmel_et_al-2007}. We have~$\kappa \lessapprox 2.373$ in theory~\cite{alman_et_al-2021}, but in practice implementations rely on results of Strassen~\cite{strassen-1969}, whose algorithm established that~$\kappa \lessapprox 2.807$. 

\paragraph{Translation orbits}

We estimate the time complexity of computing orbits under~$T$ that appear in Corollary~\ref{cor:diag_tw_perm_taction} and Section~\ref{ssec:infl_defl_torbits}. Let~$n$ be the twist order of~$\rho$ and~$\sigma$ its twist representation of dimension~$d$ for an auxiliary group~$H$.

Per Definition~\ref{def:twisted_perm_representation}, we have an associated group homomorphism~$\rho^\wr$ from~$\Ga$ to~$H \wr \rmS_n$. It yields an action of~$T \in \Ga$ on~$\{1,\ldots,n\}$ via the projection to~$\rmS_n$. Applying the orbit algorithm in this setting has time complexity~$\rmO(n)$. The stabilizer of any~$1 \le i \le n$ is generated by~$T^{n_i}$, where~$n_i := \# T^\ZZ i$ is the orbit length of~$i$. 

In typical situations, the action of~$\rho(T^{n_i})$ on the~$i$\thdash\ component~$V(\rho)_i$ of~$\CC^n \otimes V(\sigma)$ (cf.~\eqref{eq:def:twisted_perm_representation:block}) is in diagonal form already. This includes the cases of arithmetic types associated with Dirichlet characters and of Weil representations. In general, diagonalizing it contributes~$\rmO(\# R d^\kappa)$ to the total time complexity, where~$R$ is a set of orbit representatives for~$T^\ZZ$ acting on~$\{1,\ldots,n\}$.

Given orbits for~$T^\ZZ$ on~$\{1,\ldots,n\}$ and an isotypic decomposition of~$\rho(T^{n_i \ZZ})$ acting on~$V(\rho)_i$, Corollary~\ref{cor:diag_tw_perm_taction} provides an isotypic decomposition of~$\rho(T)$. The time complexity contributed by this step equals $\sum_{i \in R} \rmO\big( n_i^2 d \big) \le \rmO(n^2 d)$.

In summary, we obtain an isotypic decomposition of~$\rho(T)$ including a convenient basis in
\begin{gather*}
  \rmO(n) + \rmO(\# R d^\kappa)
  \,+\,
  \rmO(n^2 d)
\;=\;
  \rmO\big( \dim(\rho) (d^{\kappa-1} + n) \big)
\tx{.}
\end{gather*}

\paragraph{Invariants}

We discuss the contribution of~\eqref{prop:invariants_via_Ga1_restriction:triple_product} to the time complexity, and neglect the one of~\eqref{prop:invariants_via_Ga1_restriction:double_product}, which is dominated by the former. 

We recall from Proposition~\ref{prop:triple_product_dim_bound} that the space of invariants on the right hand side of~\eqref{prop:invariants_via_Ga1_restriction:triple_product} is bounded by $\mathcal{O}( N_0^{1+\epsilon} \dim(\rho) )$. The estimate in Proposition~\ref{prop:triple_product_dim_bound} arises from a bound on~$\Ga_1(N_0)$-invariant vectors. We apply Frobenius reciprocity as in~\eqref{eq:frobenius_reciprocity} to reconstruct invariants for~$\Ga$. This results in a basis of invariants in~\eqref{prop:invariants_via_Ga1_restriction:triple_product}, each of whose elements is a sum over less than~$[\Ga : \Ga_1(N_0)]$ vectors of dimension~$\dim(\rho)$. The time complexity thus depends on the index of~$\Ga_1(N_0)$ in~$\Ga$. We obtain a total contribution of
\begin{gather*}
  \rmO\big( N_0^{1+\epsilon} \dim(\rho)\, [\Ga : \Ga_1(N_0)] \dim(\rho) \big)
=
  \rmO\big( N_0^{1+\epsilon}\, [\Ga : \Ga_1(N_0)] \dim(\rho)^2 \big)
\tx{.}
\end{gather*}

For later purposes, we also record that for each of these invariant basis elements each component of~$\rho$ on average can be expressed by the following number of terms:
\begin{gather*}
  \rmO\big( [\Ga : \Ga_1(N_0)] \dim(\rho) \slash \dim(\rho) \big)
=
  \rmO\big( [\Ga : \Ga_1(N_0)] \big)
\tx{.}
\end{gather*}

\begin{remark}
\label{rm:time_complexity_invariants}
If~$N_0 \gg k$, which is usually the case, the estimate~$N_0^{1+\epsilon} \dim(\rho)$ is larger than the dimension of~$\rmM_k(\rho)$, which is of order~$\rmO(k \dim(\rho))$. Therefore we expect significant redundancy among the modular forms associated to the invariants in~\eqref{prop:invariants_via_Ga1_restriction:triple_product}. This is the ultimate reason for the preemptive comparison of dimensions in line~\ref{alg:row:main_algorithm:dimension_comparison} of Algorithm~\ref{alg:main_algorithm}. In practice, the required number of invariants for small and fixed~$k$, appears be of order~$\rmO( N^\epsilon \dim(\rho) )$. Assuming this, we can replace the total contribution of the calculation of invariants by
\begin{gather*}
  \rmO\big( N_0^\epsilon\, [\Ga_1(N_0) : \Ga] \dim(\rho)^2 \big)
\tx{.}
\end{gather*}
\end{remark}

\paragraph{Fourier expansions}

Given a basis of invariants as in line~\ref{alg:row:main_algorithm:compute_invariants} of Algorithm~\ref{alg:main_algorithm}, the next step is to compute Fourier expansions. Naively, for each basis element this involves the computation of~$\dim(\rho)$ sums of products of Eisenstein series each with~$P N$ coefficients, where~$P$ is the truncation bound in Theorem~\ref{thm:main_algorithm} and Algorithm~\ref{alg:main_algorithm}. Each sum involves~$\rmO( [\Ga : \Ga_1(N_0)])$ products on average. A typical choice of~$P$ is the Sturm bound, which is of size~$\rmO(k [\SL{2}(\ZZ) : \Ga])$. Products of power series of length~$P N$ can be determined in time complexity~$\rmO(P N \log(B N))$. The naive contribution to time complexity of Fourier expansions of an invariant is hence
\begin{gather*}
  \rmO\big(
  \dim(\rho) [\Ga : \Ga_1(N_0)]\,
  (k [\SL{2}(\ZZ) : \Ga] N)^{1 + \epsilon}
  \big)
\ll
  \rmO\big( (k N_0^2 N)^{1 + \epsilon}\, \dim(\rho) \big)
\tx{.}
\end{gather*}

Now we note that we can employ the~$T$-invariance of Fourier expansion of modular forms in~\eqref{eq:fourier_expansion_tinvariance}. This means that we can reduce ourselves to the computation with power series of length~$P$, removing the contribution of~$N$ to the time complexity. This yields time complexity
\begin{gather*}
  \rmO\big( (k N_0^2)^{1 + \epsilon} \dim(\rho) \big)
\tx{.}
\end{gather*}

\paragraph{Reduced row echelon forms}

The truncated and deflated vector-valued Fourier series expansions are gathered in a matrix in line~\ref{alg:row:main_algorithm:append_row} of Algorithm~\ref{alg:main_algorithm}. We provide a pessimistic bound for the time complexity of the row echelon reduction in line~\ref{alg:row:main_algorithm:row_echelon_reduction} by ignoring the extra termination condition on line~\ref{alg:row:main_algorithm:dimension_comparison}.

With the discussion in Section~\ref{ssec:infl_defl_torbits} in mind, we can estimate the size of the matrix~$M$ in Algorithm~\ref{alg:main_algorithm} by
\begin{gather*}
  \rmO\big( N_0^{1+\epsilon} \dim(\rho) \big)
  \,\cdot\,
  \rmO\big( k [\SL{2}(\ZZ) : \Ga] \dim(\rho) \big)
\tx{.}
\end{gather*}
The number of rows arises from the dimension of invariants, and the number columns from the Fourier expansion. This matrix is reduced to row echelon form. The time complexity contributed by this step is
\begin{gather*}
  \rmO\big( \max\big\{
  N_0^{1+\epsilon} \dim(\rho),\,
  k [\SL{2}(\ZZ) : \Ga] \dim(\rho)
  \big\}^\kappa \big)
\tx{.}
\end{gather*}

By Remark~\ref{rm:time_complexity_invariants}, for small and fixed~$k$ we expect that
preemptive termination in line~\ref{alg:row:main_algorithm:dimension_comparison} of Algorithm~\ref{alg:main_algorithm} yields the significantly better estimate
\begin{gather*}
  \rmO\big( \max\big\{
  N_0^\epsilon \dim(\rho),\,
  [\SL{2}(\ZZ) : \Ga] \dim(\rho)
  \big\}^\kappa \big)
\tx{.}
\end{gather*}

\paragraph{Fourier expansions of arbitrary precision}

Given a basis of modular forms computed along the lines of Algorithm~\ref{alg:main_algorithm}, we also obtain expressions for their components in terms of products of Eisenstein series as mentioned in~\ref{rm:main_algorithm:symbolic_expressions}. The length of each these expressions, in the shape we implemented it, is tightly connected to the explicit Frobenius reciprocity in~\eqref{eq:frobenius_reciprocity}, which yields the length estimate~$\rmO([\Ga : \Ga_1(N_0)])$. As before, products of Eisenstein series to precision~$P$ can be computed in time complexity~$\rmO(P \log P)$. In total, computing a single component has time complexity
\begin{gather*}
  \rmO\big( [\Ga : \Ga_1(N_0)]\, P \log P \big)
\tx{.}
\end{gather*}

\paragraph{Modular forms for~$\Ga_0(N)$}

We conclude with the special case of modular forms for~$\Ga_0(N)$ and a Dirichlet character~$\chi$. If we determine merely the cusp expansion at~$\infty$ for small and fixed~$k$, then~$\rho \cong \chi$ is one-dimensional. We assume that~$N_0 \approx N$, which is experimentally true for small~$k$. Then the heuristic time complexity estimates for the computation of~$T$-invariants, of invariants, the Fourier expansion of all invariants, and the reduced row echelon normal form are $\rmO(1)$, $\rmO(N^{3+\epsilon})$, $\rmO(N^{3+\epsilon})$, and~$\rmO(N^{(1+\epsilon)\kappa})$. The computation of invariants and their Fourier expansions is dominant. It yields the total time complexity
\begin{gather*}
  \rmO\big( N^{3+\epsilon} \big)
\tx{.}
\end{gather*}

Under the experimental assumption from Remark~\ref{rm:time_complexity_invariants}, the computation of invariants becomes fast and the row echelon form is dominant. It yields a total time complexity
\begin{gather*}
  \rmO\big( N^{\kappa + \epsilon} \big)
\tx{.}
\end{gather*}

We conclude with a remark on the observed performance. Also in case of classical modular forms for~$\Ga_0(N)$ our implementation is surprisingly slow. Profiling reveals that it is dominated by the calculation of invariants which happens in GAP and via an implementation of Farey fractions that improves upon the one provided in Sage~\cite{sage-9-2}.

\subsection{Examples}

To illustrate the use of the implementation provided by the authors, we consider two cases that we mentioned in the introduction: Modular forms for congruence subgroups of non-split Cartan type and modular forms for ``Moonshine-like'' arithmetic types.

\paragraph{Congruence subgroups of non-split Cartan type}

We recall from the introduction that modular forms for the congruence subgroups~$\Ga_{\mathrm{ns}}(N)$ of non-split Cartan type are usually computed via cusp expansions of modular forms for~$\Ga_1(N^2)$~\cite{mercuri-schoof-2020,assaf-2020-preprint}. In our framework, we can treat them on equal footing with modular forms for any other congrunce subgroup when taking the induction of the trivial type of~$\Ga_{\mathrm{ns}}(N)$ to~$\SL{2}(\ZZ)$. The computation of a $20$-dimensional such space is illustrated in Listing~\ref{lst:nonsplit}.

\begin{lstlisting}%
[caption={Julia code to compute modular forms for~$\Ga_{\mathrm{ns}}(7)$ of weight~$6$},
 label=lst:nonsplit]
using ModularForms
rho = induction(SL2Z, TrivialArithmeticType(GammaNS(7)))
mfs = ModularFormsSpace(6, rho, QQab)
[fourier_expansion(f,2) for f in basis(mfs)]
\end{lstlisting}

The resulting Fourier expansions are too long to reproduce in print. As an example we provide the Fourier expansion of the first component of the first basis element, which is associated with a modular form for~$\Gamma_{\mathrm{ns}}(7)$.
\begin{align*}
&
1
+
\big( -5 \zeta_{42}^{11} - \zeta_{42}^{9} - 2 \zeta_{42}^{8} + \zeta_{42}^{6} + 5 \zeta_{42}^{4} + 2 \zeta_{42} + 2 \big) q^{\frac{4}{7}}
\\&
+
\big( 9 \zeta_{42}^{11} + 7 \zeta_{42}^{9} + 7 \zeta_{42}^{8} + 9 \zeta_{42}^{6} - 9 \zeta_{42}^{4} + 5 \zeta_{42}^{3} - 7 \zeta_{42} - 5 \big) q^{\frac{5}{7}}
\\&
+
\big( -19 \zeta_{42}^{11} + 16 \zeta_{42}^{9} + 19 \zeta_{42}^{8} - 23 \zeta_{42}^{6} + 19 \zeta_{42}^{4} + 16 \zeta_{42}^{3} - 19 \zeta_{42} \big) q^{\frac{6}{7}}
\\&
+
\big( 39 \zeta_{42}^{11} + 10 \zeta_{42}^{8} - 10 \zeta_{42}^{6} - 39 \zeta_{42}^{4} - 39 \zeta_{42}^{3} - 10 \zeta_{42} - 74 \big) q
\\&
+
\big( -60 \zeta_{42}^{9} + 17 \zeta_{42}^{8} - 17 \zeta_{42}^{6} + 17 \zeta_{42}^{3} - 17 \zeta_{42} + 60 \big) q^{\frac{8}{7}}
\\&
+
\big( 32 \zeta_{42}^{11} - 159 \zeta_{42}^{9} - 253 \zeta_{42}^{8} - 32 \zeta_{42}^{4} - 253 \zeta_{42}^{3} + 253 \zeta_{42} + 32 \big) q^{\frac{9}{7}}
\\&
+
\big( 49 \zeta_{42}^{11} - 49 \zeta_{42}^{9} + 206 \zeta_{42}^{6} - 49 \zeta_{42}^{4} + 249 \zeta_{42}^{3} + 206 \big) q^{\frac{10}{7}}
\\&
+
\big( -465 \zeta_{42}^{11} + 117 \zeta_{42}^{9} - 240 \zeta_{42}^{8} - 117 \zeta_{42}^{6} + 465 \zeta_{42}^{4} + 240 \zeta_{42} + 240 \big) q^{\frac{11}{7}}
\\&
+
\big( 346 \zeta_{42}^{11} + 704 \zeta_{42}^{9} + 704 \zeta_{42}^{8} + 346 \zeta_{42}^{6} - 346 \zeta_{42}^{4} + 582 \zeta_{42}^{3} - 704 \zeta_{42} - 582 \big) q^{\frac{12}{7}}
\\&
+
\big( -875 \zeta_{42}^{11} + 594 \zeta_{42}^{9} + 875 \zeta_{42}^{8} - 731 \zeta_{42}^{6} + 875 \zeta_{42}^{4} + 594 \zeta_{42}^{3} - 875 \zeta_{42} \big) q^{\frac{13}{7}}
+
O(q^2)
\end{align*}

To illustrate the performance of our implementation, we consider the induction of the trivial type from~$\Ga_{\mathrm{ns}}(3)$, $\Ga_{\mathrm{ns}}(5)$, and~$\Ga_{\mathrm{ns}}(7)$ to~$\SL{2}(\ZZ)$. These representations have dimensions~$6$, $20$, and~$42$. The calculation of the associated weight-$6$ modular forms takes~$0.08$, $0.48$, and~$5.28$ seconds, respectively, on a single hardware thread of a home computer with AMD Ryzen 9 3900XT. Profiling the calculation reveals that the vast majority of runtime is spend on determining invariants on the left hand side of~\eqref{eq:thm:main_algorithm_preparation}. This is despite the fact that linear algebra is performed over the cyclotomic field of order~$42$. In more detail, most time is spent in orbit calculations using the highly optimized GAP implementation~\cite{gap-4-11-1}, and in our implementation of Farey symbols to determine generators of stabilizer subgroups. The latter is a variant of the implementation in Sage~\cite{sage-9-2} going back to the work of Kurth--Long~\cite{kurth-long-2008}. Our implementation employs more suitable data structures and thus achieves a 10\nbd{}fold speedup in a benchmark with~$\Ga(64)$, that now requires~$37$~seconds to complete.

\paragraph{Moonshine-like arithmetic types}

To illustrate the use of our implementation in the context of Moonshine, we intended to examine generalized Mathieu Moonshine~\cite{gaberdiel-persson-ronellenfitsch-volpato-2013}. However, we were unable to verify that twists computed using the code accompanying the paper by Gaberdiel--Persson--Ronellen\-fitsch--Volpato yield representations of~$\SL{2}(\ZZ)$. Instead, we have implemented the following construction that mimics the setup in any kind of generalized Moonshine: Given a finite abelian group~$G$, we have a permutation action of~$\SL{2}(\ZZ)$ from the right on~$G \times G$. We determine all twisted permutation types that extend this permutation action, pick a random one, and compute a corresponding space of modular forms. This is illustrated in Listing~\ref{lst:moonshine-like}. We execute these calculations over a number field and also determine the corresponding space of modular forms over~$\bbF_{17}$.

\begin{lstlisting}%
[caption={Julia code to compute modular forms for a Moonshine-like type of weight~$4$},
 label=lst:moonshine-like]
using ModularForms
using ModularForms.MoonshineLikeType
rho = rand_moonshine_like_type([2,2], 4)
mfsQQab = ModularFormsSpace(4, rho, QQab)
mfsFF = ModularFormsSpace(4, rho, FiniteField(17)[1])
\end{lstlisting}

The runtime of calculations with Moonshine-like types varies vastly with the twist. If~$G = \ZZ \slash 2 \times \ZZ \slash 2$, which is a group that appears in Mathieu Moonshine, there are~$2^{41}$ different twists of the permutation action on~$G \times G$. Table~\ref{tab:moonshine-like-example} provides the permutation and the twists for a typical example. We label the basis elements by integers between~$1$ and~$16$ and give their images including the associated twists under two generators of~$\SL{2}(\ZZ)$. The calculation of weight-$4$ modular forms for this type over a number field takes~1.15 seconds and~1.14 seconds over~$\bbF_{17}$. The marginal difference between these runtimes illustrates the large proportion that it takes to calculate the left hand side of~\eqref{eq:thm:main_algorithm_preparation}.

\begin{table}[H]
\caption{An example of a Moonshine-like twisted permutation action}
\label{tab:moonshine-like-example}
\begin{center}
\begin{tabular}{l*{16}{c}}
\toprule
 & 1 & 2 & 3 & 4 & 5 & 6 & 7 & 8 & 9 & 10 & 11 & 12 & 13 & 14 & 15 & 16
\\\midrule
   \multirow{2}{*}{$\begin{psmatrix} 0 & -1 \\ 1 & 0 \end{psmatrix}$}
 & $1$ & $5$ & $9$ & $13$ & $2$ & $6$ & $10$ & $14$ & $3$ & $7$ & $11$ & $15$ & $4$ & $8$ & $12$ & $16$
\\
 & $\zeta_4$ & $\zeta_2$ & $\zeta_4^3$ & $\zeta_4^3$ & $1$ & $\zeta_4$ & $\zeta_2$ & $\zeta_4$
 & $\zeta_4$ & $1$ & $\zeta_1$ & $\zeta_4$ & $\zeta_4^3$ & $\zeta_4$ & $\zeta_4$ & $\zeta_4$
\\\midrule
   \multirow{2}{*}{$\begin{psmatrix} 1 & 1 \\ 0 & 1 \end{psmatrix}$}
 & $1$ & $2$ & $3$ & $4$ & $6$ & $5$ & $8$ & $7$ & $11$ & $12$ & $9$ & $10$ & $16$ & $15$ & $14$ & $13$
\\
 & $\zeta_4$ & $1$ & $\zeta_2$ & $\zeta_4^3$ & $\zeta_4$ & $\zeta_2$ & $\zeta_4^3$ & $\zeta_4$ &
   $\zeta_4^3$ & $1$ & $\zeta_4^3$ & $1$ & $\zeta_2$ & $\zeta_4$ & $\zeta_4$ & $\zeta_2$
\\
\bottomrule
\end{tabular}
\end{center}
\end{table}

\section{Comparisons with alternative algorithms}
\label{sec:alternative_algorithms}

There are two established algorithms to compute classical modular forms, which were surveyed in~\cite{best_et_al-2020}: Modular symbols and the Eichler-Selberg trace formula. Both are a priori limited to the computation of scalar-valued modular forms for congruences subgroups~$\Ga$ via their Hecke eigenvalues, that is, their cusp expansions at~$\infty$. If the level of~$\Ga$ is square-free the associated vector-valued modular forms of type~$\Ind_{\Ga}\,\bbone$ for~$\SL{2}(\ZZ)$ can be recovered from the action of Atkin-Lehner involutions. For general~$\Ga$ this is not possible. Fixing the weight, as we did in Section~\ref{ssec:time_complexity}, computing the Fourier expansions up to precision~$P \gg N$ of a basis of~$\rmM_k(\Ga_0(N), \chi)$ using modular symbols or the trace formula has time complexity
\begin{gather*}
  \rmO\big( N^{1+\epsilon} P^2 \big)
\quad\tx{and}\quad
  \rmO\big( N^{\frac{3}{2}+\epsilon} P^{\frac{3}{2}} \big)
\tx{.}
\end{gather*}
While both algorithms compute less than what Algorithm~\ref{alg:main_algorithm} does, they do so significantly faster. 

There are two further algorithms in the literature that target vector-valued modular forms and share some features with the our Algorithm~\ref{alg:main_algorithm}. The first one by Cohen~\cite{cohen-2019} is based on products of scalar-valued Eisenstein series for~$\Ga_0(N)$ with character. The second one by Williams~\cite{williams-2018a} is based on products of Eisenstein series for the Weil representation with theta series of weight~$\frac{1}{2}$, which are residues of Eisenstein series. Both are restricted to specific kinds of vector-valued modular forms, but when applicable perform significantly better than our implementation of Algorithm~\ref{alg:main_algorithm}. We will discuss the reason why and potential remedies to their limitations.

\subsection{Products of scalar-valued Eisenstein series}%
\label{ssec:product_scalar_valued_eisenstein_series}

Cohen~\cite{cohen-2019} implemented an algorithm in Pari/GP to calculate Petersson scalar products of modular forms in~$\rmM_k(\Ga_0(N),\chi)$, the space of modular forms of weight~$k$ with~$f |_k \ga = \chi(d) f$ for~$\ga = \begin{psmatrix} a & b \\ c & d \end{psmatrix} \in \Ga_0(N)$ for a Dirichlet character~$\chi$. As one of the steps in his algorithm he requires the cusp expansions of modular forms. He determines these cusp expansions after having calculated modular forms via Eichler-Selberg trace formula~\cite{belabas-cohen-2018}, but we remark that his approach can be adjusted to calculate a basis for~$\rmM_k(\Ga_0(N),\chi)$. General cusp expansions enable the computation of a basis of~$\rmM_k(\rho_\chi)$, where~$\rho_\chi$ is as in~\eqref{eq:def:rhoN_rhochi}. In other words, one can recover from Cohen's work the special case~$\rho = \rho_\chi$ of Algorithm~\ref{alg:main_algorithm}.

\begin{remark}
Any irreducible congruence type with nontrivial~$T$-fixed vectors embeds into a suitable~$\rho_\chi$. To make general congruence types accessible to Cohen's approach, one can use the vector-valued Hecke operators~$\rmT_N$ of~\cite{raum-2017} and the inclusion~$\rho \hra \rmT_N\, \rmT_N\, \rho$. If~$N$ is suitably chosen, then there is a subrepresentation of~$\rho' \hra \rmT_N\,\rho$ generated by its $T$\nbd fixed vectors such that~$\rho \hra \rmT_N\, \rho'$.
\end{remark}

The theoretical foundation of Cohen's algorithm was given by Borisov--Gunells in their work on toric modular forms~\cite{borisov-gunnells-2001,borisov-gunnells-2001b,borisov-gunnells-2003}. As a corollary to their results one finds that if~$k > 2$, then
\begin{gather*}
  \rmM_k(\Ga_0(N), \chi)
\;\subseteq\;
  \rmE_k(\Ga_0(N), \chi)
  +
  \sum_{l = 1}^{k-1}
  \rmE_l(\Ga_1(N))
  \cdot
  \rmE_{k-l}(\Ga_1(N))
\tx{,}
\end{gather*}
where~$\rmE_k(\Ga_0(N), \chi)$ stands for the spaces of weight-$k$ Eisenstein series for~$\Ga_0(N)$ and~$\chi$. In other words, we have
\begin{gather}
\label{eq:borisov_gunnells}
  \rmM_k(\Ga_0(N), \chi)
\;=\;
  \rmE_k(\Ga_0(N), \chi)
  +
  \sum_{l = 1}^{k-1}
  \rmH^0 \big( 
  (\rmE_l(\Ga_1(N)) \cdot \rmE_{k-l}(\Ga_1(N)))
  \otimes
  \chi
  \big)
\tx{,}
\end{gather}
where we allows ourselves to identify the invariants on the right hand side with the corresponding modular forms. If~$k = 2$, there is no direct analogue of~\eqref{eq:borisov_gunnells}, but Cohen works around this by a variant of it.

The equality in~\eqref{eq:borisov_gunnells} parallels the isomorphism in~\eqref{eq:raum_xia}, but the underlying representation theory is much simpler, since the former features~$\rmE_l(\Ga_1(N))$ as opposed to~$\cE_l(N) = \rmE_l(\Ga(N))$ from Section~\ref{ssec:prelim:eisenstein}. Recall from Proposition~\ref{prop:abstract_eis} that~$\cE_l(N)$ as a representation of~$\SL{2}(\ZZ)$ is a quotient of the induced representation~$\rho_N$ from~\eqref{eq:def:rhoN_rhochi}. By Mackey's Double Coset Theorem, its restriction to~$\Ga_0(N)$ in general contains multidimensional, irreducible representations. Not so~$\rmE_l(\Ga_1(N))$, which can be decomposed into characters. More precisely, since~$\rmE_l(\Ga_1(N))$ consists of~$T$\nbd invariant functions, it descends to a representation of the abelian quotient~$\Ga_0(N) \slash \Ga_1(N)$. In classical terms, we have
\begin{gather*}
  \rmE_l(\Ga_1(N))
=
  \bigoplus_{\chi \pmod{N}}
  \rmE_l(\Ga_0(N), \chi)
\tx{,}
\end{gather*}
which parallels the representation theoretic decomposition
\begin{gather*}
  \Ind_{\Ga_1(N)}^{\Ga_0(N)}\,
  \bbone
\cong
  \bigoplus_{\chi \pmod{N}}
  \chi
\tx{,}
\end{gather*}
where the direct sum in both equations runs over Dirichlet characters~$\chi$ and in the second one we view~$\chi$ as a representations as in~\eqref{eq:def:dirichlet_type}.

When formulating Cohen's algorithm in the language of Theorem~\ref{thm:main_algorithm_preparation}, we have to compute
\begin{gather*}
  \bigoplus_{\chi'_1, \chi'_2 \pmod{N}}
  \rmH^0\big( \chi^{\prime\,\vee}_1 \otimes \chi^{\prime\,\vee}_2 \otimes \chi \big)
\tx{.}
\end{gather*}
Each of these spaces is nonzero if and only if~$\chi = \chi'_1 \chi'_2$. The most time-consuming part of Algorithm~\ref{alg:main_algorithm}, the computation of invariants, is hence reduce to a factorization in a finite, commutative group. This explains the superior performance of Cohen's implementation.

\subsection{Products of Weil-type Eisenstein and theta series}%
\label{ssec:product_weil_type_eisenstein_theta_series}

Based on his thesis~\cite{williams-2018a}, Williams provided an algorithm to calculate a basis of~$\rmM_k(\rho)$ where~$\rho$ is a Weil representation. Recall from, for instance~\cite{scheithauer-2009}, that the Weil representation is associated with a finite quadratic module~$(F,q)$. If the signature of~$(F,q)$ is even, the associated Weil representation is a representation of~$\SL{2}(\ZZ)$, and in general of the metaplectic group, which we do not introduce here. One can circumvent the limitation to Weil representation by the fact that every congruence type is a subrepresentation of a suitable Weil representation. The dimension of this enveloping Weil representation, however, might be very large.

As opposed to the work of Borisov--Gunnells, the work by Williams a priori rather features Jacobi forms~\cite{eichler-zagier-1985} and not products of Eisenstein series. To make the analogy to Algorithm~\ref{alg:main_algorithm} clear, we need some preparation. We write~$\rho_m$ for the Weil representation associated with the finite quadratic module~$(\ZZ \slash 2m \ZZ, x \mto x^2 \slash 4m)$. Recall that classical Jacobi forms are functions in two variables~$\tau$ and~$z$ on~$\HS \times \CC$. We will need the connection between Jacobi forms of index~$m$ and type~$\rho$, and vector-valued modular forms of type~$\rho \otimes \rho_m^\vee$ via the theta decomposition. It features the Jacobi theta series~$\theta_{m,l}$ for~$l \,\pmod{2m}$. A Jacobi form of index~$m$ can be written as
\begin{gather}
\label{eq:williams_specialization}
  \phi(\tau, z)
=
  \sum_{l \pmod{2m}}
  f_l(\tau)\, \theta_{m,l}(\tau,z)
\tx{,}\quad
  (f_l)_{l \pmod{2m}}
\in
  \rmM_{k-\frac{1}{2}}\big( \rho \otimes \rho_m^\vee \big)
\tx{.}
\end{gather}
We write~$\rmE^\rmJ_{k,m}(\rho)$ for the space of Jacobi Eisenstein series of weight~$k$, index~$m$, and type~$\rho$. For our purpose, it is important to record that the theta decomposition maps it to usual Eisenstein series.

Williams's algorithm is founded on the combination of two facts: First, the specialization of Jacobi Eisenstein series to~$z = 0$ yields ``Poincar\'e square-series''. Second, specific Dirichlet convolutions of Poincar\'e square--series yield usual Poincar\'e series. Since Poincar\'e series span spaces of modular forms, one obtains
\begin{gather}
  \rmM_k(\rho)
=
  \sum_{m = 1}^\infty \rmE^\rmJ_{k,m}(\rho) \big|_{z = 0}
\tx{,}
\end{gather}
An upper bound on which Eisenstein series are required on the right hand side can be deduced from a Sturm bound for~$\rmM_k(\rho)$ and the connection to Poincar\'e series. The resulting algorithm is very efficient provided that the Fourier expansion of Jacobi Eisenstein series can be computed at the required level of generality. Besides the relation between Poincar\'e square-series and Poincar\'e series, the need for these Fourier expansions is the source of the restriction of Williams's algorithm to Weil representations~$\rho$.

The theta decomposition allows us to recognize the specialization of Jacobi Eisenstein series in~\eqref{eq:williams_specialization} as a sum of products of Eisenstein series of weight~$k - \frac{1}{2}$ and theta series of weight~$\frac{1}{2}$. Since these theta series appear as residues of Eisenstein series, we allows ourselves to write~$\cE_{\frac{1}{2}}(\rho_m)$ for the space of modular forms (of half-integral weight) spanned by the~$\theta_{m,l}$, $l\pmod{2m}$, and~$\rmE_{\frac{1}{2}}(\rho_m)$ for the corresponding space of vector-valued modular forms. A weaker form of Williams's results in the spirit of Theorem~\ref{thm:main_algorithm_preparation} is
\begin{gather*}
  \rmM_k(\rho)
\;\cong\;
  \sum_{m = 1}^\infty
  \rmH^0\big(
  (\cE_{k-\frac{1}{2}}(\rho \otimes \rho_m^\vee) \cdot \cE_{\frac{1}{2}}(\rho_m))
  \otimes
  \rho
  \big)
\tx{.}
\end{gather*}
When evaluating this isomorphism, the invariants that occur are
\begin{gather}
\label{eq:williams_H0}
  \rmH^0\big( \rho^\vee \otimes ( \rho \otimes \rho_m^\vee ) \otimes \rho_m \big)
\tx{.}
\end{gather}

But Williams successfully avoids the calculation of invariants altogether. He thus skips the most time consuming part of Algorithm~\ref{alg:main_algorithm}. Specifically, his results employ canonical elements~$\id_m$ in~\eqref{eq:williams_H0}. We view them as homomorphisms from modular forms of arithmetic type~$\rho \otimes \rho_m^\vee \otimes \rho_m$ to modular forms of type~$\rho$. Then Williams's result can be rephrased as:
\begin{gather}
  \rmM_k(\rho)
=
  \sum_m
  \id_m\big(
  \rmE_{k-\frac{1}{2}}(\rho \otimes \rho_m^\vee) \otimes \rmE_{\frac{1}{2}}(\rho_m)	
  \big)
\tx{.}
\end{gather}
This explains the superior performance of Williams's implementation.

\ifbool{nobiblatex}{%
  \bibliographystyle{alpha}%
  \bibliography{bibliography.bib}%
}{%
  \vspace{1.5\baselineskip}
  \renewcommand{\baselinestretch}{.8}
  \Needspace*{4em}
  \printbibliography[heading=none]%
}

\Needspace*{3\baselineskip}
\noindent
\rule{\textwidth}{0.15em}

{\noindent\small
Chalmers tekniska högskola och Göteborgs Universitet,
Institutionen för Matematiska vetenskaper,
SE-412 96 Göteborg, Sweden\\
E-mail: \url{tobmag@chalmers.se}
}\vspace{.5\baselineskip}

{\noindent\small
Chalmers tekniska högskola och Göteborgs Universitet,
Institutionen för Matematiska vetenskaper,
SE-412 96 Göteborg, Sweden\\
E-mail: \url{martin@raum-brothers.eu}\\
Homepage: \url{http://martin.raum-brothers.eu}
}%

\end{document}

